\setlist{leftmargin=6mm,nolistsep,noitemsep}
\crefname{question}{Question}{Questions}
\crefname{step}{Step}{Steps}
\crefname{claim}{Claim}{Claims}
\crefname{problem}{Problem}{Problems}
\crefname{observation}{Observation}{Observations}
\crefname{remark}{Remark}{Remarks}
\DeclareMathOperator{\proj}{proj}
\DeclareMathOperator{\NA}{NA}
\DeclareMathOperator{\BPO}{BPO}
\DeclareMathOperator{\inc}{inc}
\DeclareMathOperator{\pri}{pri}
\DeclareMathOperator{\size}{size}
\newcommand{\tab}{\mathrm{Tab}}
\def\R{{\mathbb R}}
\def\Z{{\mathbb Z}}
\newenvironment{prf}[1][]
{\begin{proof}}
{\qed \end{proof}}
\newenvironment{prfc}[1][]
{\begin{proof}{\emph{#1 }}}
{\qed \end{proof}}
\newcounter{claim} 
\renewenvironment{claim}[1][]
{\refstepcounter{claim} \begin{trivlist} \item[] {\bf Claim~\theclaim}\space#1 \itshape}
{\end{trivlist}}
\journalname{Mathematical Programming A}
\newtheorem{theorem}{Theorem}
\newtheorem{proposition}{Proposition}
\newtheorem{lemma}{Lemma}
\newtheorem{corollary}{Corollary}
\newtheorem{remark}{Remark}
\newenvironment{prf}[1][]
{\begin{proof}}
{\end{proof}}
\newtheorem{observation}{Observation}
\definecolor{burntorange}{rgb}{0.8, 0.33, 0.0}
\definecolor{cadmiumgreen}{rgb}{0.0, 0.42, 0.24}
\newcommand{\var}{{\textcolor{blue}{x}}}
\newcommand{\vars}{{\textcolor{blue}{X}}}
\newcommand{\yvar}{{\textcolor{blue}{y}}}
\newcommand{\yvars}{{\textcolor{blue}{Y}}}
\newcommand{\dom}{{\textcolor{cadmiumgreen}{D}}}
\newcommand{\domI}{{\textcolor{cadmiumgreen}{D_I}}}
\newcommand{\con}{{\textcolor{burntorange}{c}}}
\newcommand{\cons}{{\textcolor{burntorange}{C}}}
\newcommand{\consin}{{\textcolor{burntorange}{C^\in}}}
\newcommand{\consge}{{\textcolor{burntorange}{C^\ge}}}
\newcommand{\consBPO}{{\textcolor{burntorange}{C_{\BPO}}}}
\newcommand{\consF}{{\textcolor{burntorange}{C_{F}}}}
\newcommand{\consinc}{{\textcolor{burntorange}{C^\in_\con}}}
\newcommand{\consgec}{{\textcolor{burntorange}{C^\ge_\con}}}
\newcommand{\consL}{{\textcolor{burntorange}{C_L}}}
\newcommand{\consLge}{{\textcolor{burntorange}{C^\ge_L}}}
\newcommand{\consLle}{{\textcolor{burntorange}{C^\le_L}}}
\newcommand{\consS}{{\textcolor{burntorange}{C_S}}}
\newcommand{\consSge}{{\textcolor{burntorange}{C^\ge_S}}}
\newcommand{\consSle}{{\textcolor{burntorange}{C^\le_S}}}
\newcommand{\val}{\nu}
\newcommand{\wei}{\omega}
\newcommand{\pwidth}{{w_{\proj}}}
\newcommand{\pswidth}{w_{\text{ps}}}
\newcommand{\itwidth}{{w_{\inc}}}
\newcommand{\ptwidth}{{w_{\pri}}}
\newcommand{\ass}{\tau}
\newcommand{\assb}{\tau'}
\newcommand{\assc}{\tau''}
\newcommand{\Ass}{\hat X}
\newcommand{\pairs}{\mathcal P}
\DeclareMathOperator{\cb}{cb}
\newcommand{\pare}[1]{\left(#1\right)}
\newcommand{\bra}[1]{\left\{#1\right\}}
\newcommand{\card}[1]{\left|#1\right|}
\begin{document}

\title{Projection-width as a structural parameter for discrete separable optimization}

\ifthenelse {\boolean{MPA}}
{

\author{Alberto Del Pia}
\institute{Alberto~Del~Pia \at
              Department of Industrial and Systems Engineering 
              \& Wisconsin Institute for Discovery \\
              University of Wisconsin-Madison, Madison, WI, USA \\
              \email{delpia@wisc.edu}}
}
{
\author{Alberto Del Pia
\thanks{Department of Industrial and Systems Engineering \& Wisconsin Institute for Discovery,
             University of Wisconsin-Madison, Madison, WI, USA.
             E-mail: {\tt delpia@wisc.edu}.}}
}

\date{February 5, 2026}


\maketitle

\begin{abstract}
While several classes of integer linear optimization problems are known to be solvable in polynomial time, far fewer tractability results exist for integer nonlinear optimization.
In this work, we narrow this gap by identifying a broad class of discrete nonlinear optimization problems that admit polynomial-time algorithms.
Central to our approach is the notion of \emph{projection-width,} a structural parameter for systems of separable constraints, defined via branch decompositions of variables and constraints. 
We show that several fundamental discrete optimization and counting problems can be solved in polynomial time when the projection-width is polynomially bounded, including optimization, counting, top-$k$, and weighted constraint violation problems.
Our results subsume and generalize some of the strongest known tractability results across multiple research areas: integer linear optimization, binary polynomial optimization, and Boolean satisfiability. 
Although these results originated independently within different communities and for seemingly distinct problem classes, our framework unifies and significantly generalizes them under a single structural perspective.

\ifthenelse {\boolean{MPA}}
{
\keywords{discrete separable optimization \and integer nonlinear optimization \and integer linear optimization \and polynomial time algorithm \and projection width \and incidence treewidth}
\subclass{MSC 90C09 \and 90C10 \and 90C26 \and 90C39 \and 90C60}
} {}
\end{abstract}

\ifthenelse {\boolean{MPA}}
{}{
\emph{Key words:} discrete separable optimization; integer nonlinear optimization; integer linear optimization; polynomial time algorithm; projection width; incidence treewidth
}

\section{Introduction}
\label{sec intro}

The field of integer linear optimization has reached a high level of maturity, with a rich theoretical framework, efficient algorithms, and numerous applications.
A central outcome of this development is the identification of broad and structurally rich classes of integer linear optimization problems that admit polynomial-time algorithms (see \cite{SchBookIP,ConCorZamBook} and references therein).
Many optimization problems arising in applications, however, are inherently nonlinear. 
Despite their importance, integer nonlinear optimization problems remain far less understood from a complexity-theoretic perspective, with polynomial-time solvability known only for a few highly restrictive classes.
The main goal of this paper is to narrow this gap by identifying a broad class of integer nonlinear optimization problems that can be solved in polynomial time.

\paragraph{Separable systems.}

The framework that we introduce in this paper operates over highly general systems of separable constraints.
A \emph{separable system} is a quadruple $\pare{\vars,\dom,\consin,\consge}$, where $\vars$ is a finite set of \emph{variables}, $\dom$ is a finite \emph{domain} set, and where $\consin$, $\consge$ are sets of \emph{separable constraints} of the form
\begin{align*}
& \sum_{\var \in \vars} f^\con_\var(\var) \in \Delta^\con && \con \in \consin, \\
& \sum_{\var \in \vars} f^\con_\var(\var) \ge \delta^\con && \con \in \consge,
\end{align*}
where $f^\con_\var : \dom \to \Z$ for every $\var \in \vars$ and $\con \in \consin \cup \consge$, 
where $\delta^\con \in \Z$ for every $\con \in \consge$,
and where $\Delta^\con$ is a finite subset of $\Z$ for every $\con \in \consin$.
Throughout the paper, we assume that a separable system is given by explicitly providing $f^\con_\var$ for every $\var \in \vars$ and $\con \in \cons$, $\Delta^\con$ for every $\con \in \consin$, and $\delta^\con$ for every $\con \in \consge$.
Clearly, a constraint in $\consge$ can also be expressed as a constraint in $\consin$.
However, we choose to handle constraints in $\consge$ separately, as doing so allows us to efficiently solve broader classes of problems.
Separable constraints constitute a broad and expressive class of constraints.
Notable special cases include linear and pseudo-Boolean inequalities and set constraints, as well as parity or, more generally, modulo constraints.

\paragraph{The problems.}

In this paper, we consider several fundamental discrete optimization and counting problems defined over a separable system $\pare{\vars,\dom,\consin,\consge}$, which we informally introduce below:
\begin{enumerate}
\item
\emph{Optimization:} Given a separable value function for each variable, find a highest-value assignment from $\vars$ to $\dom$ satisfying all the constraints. 
(See \cref{sec opt} and \cref{th optimization}.)
\item
\emph{Counting:} Count the number of assignments from $\vars$ to $\dom$ satisfying all the constraints.
(See \cref{sec counting} and \cref{th counting}.)
\item
\emph{Top-$k$:} Given a separable value function for each variable, find $k$ highest-value assignments from $\vars$ to $\dom$ satisfying all the constraints. 
(See \cref{sec top-k} and \cref{th top-k}.)
\item
\emph{Weighted constraint violation:} Given a weight for each constraint, find an assignment from $\vars$ to $\dom$ that minimizes the weighted violation of the constraints.
(See \cref{sec weighted constraint violation} and \cref{th weighted constraint violation}.)
\end{enumerate}

Although the four problems above already form a substantial class of fundamental discrete nonlinear optimization and counting problems, our framework can be applied more broadly to other problems defined over separable systems.
For instance, it can be leveraged in polyhedral theory to derive compact extended formulations using the approach of \cite{MarRarCam90}, and in knowledge compilation to translate constraints into succinct deterministic DNNF representations, as in~\cite{Cap16PhD}. 

\paragraph{Our results.}

We present exact algorithms for the four problems mentioned above.
The running times of our algorithms are low-degree polynomials in $\card{\vars}$, $\card{\dom}$, $\card{\cons}$, where $\cons := \consin \cup \consge$, and in two more parameters: $\pwidth$ and $\Lambda$.
The first one, $\pwidth$, is the \emph{projection-width} of the separable system, which is the central concept introduced in this paper and is formally defined in \cref{sec def projection-width}.
This concept is inspired by the \emph{PS-width} of CNF formulas, originally developed for satisfiability problems in~\cite{SaeTelVat14}, and it generalizes the idea in a substantially different context and captures structural properties specific to separable systems over finite domains.
The second parameter is $\Lambda$, which denotes the maximum time required to check whether an integer belongs to a set $\Delta^\con$, for some $\con \in \consin$.
This parameter is introduced for convenience and generality, since $\Lambda$ depends on how $\Delta^c$ is given.
The running time that we obtain for the optimization and the counting problems is
\begin{equation*}
O\pare{
\pwidth^3 \pare{\card{\vars}+\card{\cons}} \card{\cons}
+
\pwidth \card{\consin} \Lambda
+
\card{\vars} \card{\cons} \card{\dom}\log\pare{\card{\dom}}}.
\end{equation*}
For the top-$k$ problem, we get 
\begin{equation*}
O\pare{
\pwidth^3 \pare{\card{\vars}+\card{\cons}} \pare{\card{\cons} + k \log(k)}
+
\pwidth \card{\consin} \Lambda
+
\card{\vars} \card{\cons} \card{\dom}\log\pare{\card{\dom}}},
\end{equation*}
and for the weighted constraint violation we get 
\begin{equation*}
O\pare{
\pwidth^3 \pare{\card{\vars}+\card{\cons}} \card{\cons}
+
\card{\vars} \card{\cons} \card{\dom}\log\pare{\card{\dom}}}.
\end{equation*}
We refer the reader to \cref{th optimization,th counting,th top-k,th weighted constraint violation} for the precise statements of our results.
All running times are stated under the \emph{arithmetic model} of computation, in which each basic arithmetic operation (addition, subtraction, multiplication, division, and comparison) is assumed to take constant time, independent of operand sizes.
Moreover, in our algorithms, the sizes of all intermediate and output values are bounded by a polynomial in the input size.
We note that sharper running-time bounds may be attainable through a more refined analysis, especially in specific cases.
For instance, when the involved functions are linear rather than separable, when $\card{\dom} = 2$, or when branch decompositions (introduced later in this section) are linear~\cite{SaeTelVat14}.

\medskip

Our results establish a broad class of tractable discrete nonlinear optimization and counting problems.
To illustrate their generality, we show that they subsume some of the strongest known tractability results in integer linear optimization, binary polynomial optimization, and Boolean satisfiability.
In integer linear optimization, it subsumes the polynomial-time solvability of instances with bounded incidence treewidth established in~\cite{GanOrdRam17}.
In binary polynomial optimization, it generalizes the tractability of instances defined on hypergraphs with bounded incidence treewidth shown in~\cite{CapdPDiG24xx}, while additionally allowing the presence of further constraints.
In Boolean satisfiability, our framework encompasses the tractability of weighted MaxSAT and \#SAT on CNF formulas with bounded PS-width~\cite{SaeTelVat14}, which captures nearly all known tractable cases of these problems~\cite{Cap16PhD}.

All these three results follow as direct corollaries of our framework under highly specialized and restrictive assumptions.
Compared to our general setting, each of them is considerably narrower.
In particular, all functions involved are linear,
only inequality constraints are considered,
and the variable domains are restricted to 
integer intervals in integer linear optimization, and to $\bra{0,1}$ in binary polynomial optimization and Boolean satisfiability.
While the primary contribution of this paper lies in the nonlinear setting, the fact that these disparate results emerge as special cases highlights the broader significance of projection-width, which provides a common structural explanation for tractability across seemingly different problem classes and research communities.

In the remainder of this section, we present the definition of projection-width.

\subsection{Projection-width}
\label{sec def projection-width}

\paragraph{Translated separable system.}

Consider a separable system $\pare{\vars,\dom,\consin,\consge}$, and let $\cons := \consin \cup \consge$.
Although not strictly necessary, we begin by rewriting the constraints in~$\cons$.
This reformulation clarifies the intuition behind projection-width and simplifies the notation used throughout the paper.

For every $\con \in \cons$ we define 
\begin{align*}
g^\con_\var & := 
f^\con_\var - \min\bra{f^\con_\var(d) \mid d \in \dom} 
& \forall \var \in \vars.
\end{align*}
For every $\con \in \consin$, we also define
\begin{align*}
\Gamma^\con & := \Delta^\con - \sum_{\var \in \vars} \min\bra{f^\con_\var(d) \mid d \in \dom},
\end{align*}
and for every $\con \in \consge$, we set
\begin{align*}
\gamma^\con & := \delta^\con - \sum_{\var \in \vars} \min\bra{f^\con_\var(d) \mid d \in \dom}.
\end{align*}
We then obtain the equivalent \emph{translated separable system}
\begin{align*}
& \sum_{\var \in \vars} g^\con_\var(\var) \in \Gamma^\con && \con \in \consin, \\
& \sum_{\var \in \vars} g^\con_\var(\var) \ge \gamma^\con && \con \in \consge.
\end{align*}
We now have $g^\con_\var : \dom \to \Z_{\ge 0}$ and $\min\bra{g^\con_\var(d) \mid d \in \dom} = 0$ for every $\var \in \vars$ and $\con \in \cons$.
As a result, we can assume $\Gamma^\con \subseteq \Z_{\ge 0}$, for every $\con \in \consin$.
Similarly, we can assume $\gamma^\con \ge 0$ for every $\con \in \consge$, since otherwise every assignment satisfies the constraint, and so the constraint can be discarded.
For every $\con \in \consin$, we define $\gamma^\con$ as the largest element in $\Gamma^\con$.
From now on, we will mostly consider the translated separable system instead of the original separable system.

\paragraph{Projections.}

For constraint $\con \in \cons$, $\vars' \subseteq \vars$, and $\ass : \vars' \to \dom$, we define the \emph{constraint bound} of $\con$ by $\ass$ as
\begin{align*}
\cb^\con\pare{\ass} & := \sum_{\var \in \vars'} g^\con_\var(\ass(\var)).
\end{align*}
Note that $\cb^\con\pare{\ass} \in \Z_{\ge 0}$.
To build intuition, we observe that the constraint bound 
provides the minimum possible left hand side of constraint $\con$ over all assignments from $\vars$ to $\dom$ whose restriction to $\vars'$ is $\ass$.

Given $\vars' \subseteq \vars$, $\cons' \subseteq \cons$, and $\ass : \vars' \to \dom$, we denote by $\cons' / \ass$ the map from $\cons'$ to $\Z_{\ge 0}$ defined by
\begin{align*}
& \pare{\cons' / \ass}^\con := \min\bra{\cb^\con\pare{\ass},\gamma^\con} && \forall \con \in \cons'.
\end{align*}
Note that in this context, we use the word ``map'' to avoid confusion with our ``assignments.''
Since $\gamma^\con \in \Z_{\ge 0}$ and $\cb^\con\pare{\ass} \in \Z_{\ge 0}$, we have $\pare{\cons' / \ass}^\con \in \bra{0,1,\dots,\gamma^\con}$, for every $\con \in \cons'$.
Note that, if $\cons' = \emptyset$, then $\cons' / \ass$ is the empty map $\epsilon : \emptyset \to \Z_{\ge 0}$.
On the other hand, if $\vars' = \emptyset$, then $\ass$ is the empty assignment $\epsilon : \emptyset \to \dom$, and $\cons' / \epsilon$ is given explicitly by
\begin{align*}
&\pare{\cons' / \epsilon}^\con = 0 && \forall \con \in \cons'.
\end{align*}
We observe that $\cons' / \ass$ captures the contribution of the assignment $\ass$ to the left hand sides of the constraints in $\cons'$, up to the threshold $\gamma^\con$.

Given $\vars' \subseteq \vars$ and $\cons' \subseteq \cons$, we define
\begin{align*}
\proj(\cons', \vars'): = \bra{ \cons' / \ass \mid \ass : \vars' \to \dom}.
\end{align*}
As a result, $\proj(\cons', \vars')$ encodes how all assignments from $\vars'$ to $\dom$ can contribute to the left hand sides of the constraints in $\cons'$, before the threshold $\gamma^\con$ is reached.


\paragraph{Projection-width.}

Consider now a branch decomposition $T$ of $\vars \cup \cons$, which is a rooted binary tree with a one-to-one correspondence between the leaves of $T$ and the set $\vars \cup \cons$.
We say that a vertex of $T$ is an \emph{inner vertex} if it has children, and is a \emph{leaf} if it has no children.
Given a vertex $v$ of $T$, we denote by $T_v$ the subtree of $T$ rooted in $v$, by $\cons_v$ the set of constraints in $\cons$ such that the corresponding vertex appears in the leaves of $T_v$, and by $\vars_v$ the set of variables in $\vars$ that similarly appear in the leaves of $T_v$.
We denote by $\overline{\cons_v}:=\cons \setminus \cons_v$ and by $\overline{\vars_v}:=\vars \setminus \vars_v$. 
A key role is played by the two projections
\begin{align*}
\proj(v) & := \proj(\overline{\cons_v}, \vars_v) 
= \bra{ \overline{\cons_v} / \ass \mid \ass : \vars_v \to \dom}, \\
\proj(\overline v) & := \proj(\cons_v, \overline{\vars_v})
= \bra{ \cons_v / \ass \mid \ass : \overline{\vars_v} \to \dom}.
\end{align*}
If we denote by $V(T)$ the set of vertices of $T$, the \emph{projection-width} of the separable system $\pare{\vars,\dom,\consin,\consge}$ and $T$ is defined by
\begin{align*}
\max _{v \in V(T)} 
\max \pare{\card{\proj(v)}, \card{\proj(\overline v)}}.
\end{align*}
The \emph{projection-width} of the separable system $\pare{\vars,\dom,\consin,\consge}$ is then defined as the minimum among the projection-widths of the separable system $\pare{\vars,\dom,\consin,\consge}$ and $T$, over all branch decompositions $T$ of $\vars \cup \cons$.

\paragraph{Organization of the paper.}
The rest of the paper is organized as follows.
In~\cref{sec projections}, we study the relationship between the projections $\proj(v)$ and $\proj(\overline v)$ in a branch decomposition, and use these insights to efficiently construct all such sets throughout the decomposition.
In~\cref{sec shapes}, we introduce the notion of \emph{shapes}, which enables us to retain in the branch decomposition only the assignments that satisfy all the constraints, effectively filtering out the others.
In~\cref{sec algorithms}, we present and analyze our algorithms for optimization, top-$k$, counting, and weighted constraint violation.
Finally, in~\cref{sec consequences}, we obtain some corollaries of our main theorems, and we discuss how our results subsume previously known tractability results in integer linear optimization, binary polynomial optimization, and Boolean satisfiability.

\section{Projections}
\label{sec projections}

In the following, given $\vars_1,\vars_2$ disjoint subsets of $\vars$, $\ass_1 : \vars_1 \to \dom$, $\ass_2 : \vars_2 \to \dom$, we denote by $\ass_1 \cup \ass_2$ the assignment from $\vars_1 \cup \vars_2$ to $\dom$ defined by
\begin{align*}
\pare{\ass_1 \cup \ass_2}(\var) := 
\begin{cases}
\ass_1(\var) & \text{if $\var \in \vars_1$}, \\
\ass_2(\var) & \text{if $\var \in \vars_2$}.
\end{cases}
\end{align*}

\begin{observation}
\label{obs vb}
Consider a separable system $\pare{\vars,\dom,\consin,\consge}$, and let $\con \in \consin \cup \consge$.
The following properties hold:
\begin{enumerate}[label=(\roman*)]
\item
\label{obs vb 1}
Given $\vars' \subseteq \vars$ and $\ass : \vars' \to \dom$,
$\cb^\con\pare{\ass} \in \Z_{\ge 0}$.
\item
\label{obs vb 3}
Given $\vars_1,\vars_2$ disjoint subsets of $\vars$, $\ass_1 : \vars_1 \to \dom$, $\ass_2 : \vars_2 \to \dom$, we have $\cb^\con\pare{\ass_1 \cup \ass_2} = \cb^\con\pare{\ass_1} + \cb^\con\pare{\ass_2}$.
\end{enumerate}
\end{observation}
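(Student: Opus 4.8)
The plan is to unwind the definitions; both parts are elementary bookkeeping facts about finite sums, recorded here only so that they can be cited cleanly in later sections.

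For part~(i), I would first recall the construction of the translated system: by definition $g^\con_\var = f^\con_\var - \min\bra{f^\con_\var(d) \mid d \in \dom}$, so that $g^\con_\var$ maps $\dom$ into $\Z_{\ge 0}$ for every $\var \in \vars$ (and in fact attains the value $0$ somewhere). Consequently, every summand $g^\con_\var\pare{\ass(\var)}$ appearing in $\cb^\con\pare{\ass} = \sum_{\var \in \vars'} g^\con_\var\pare{\ass(\var)}$ is a nonnegative integer, and a finite sum of nonnegative integers is again a nonnegative integer; in the degenerate case $\vars' = \emptyset$ the sum is empty and equals $0 \in \Z_{\ge 0}$. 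That finishes part~(i).

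For part~(ii), the key point is that $\vars_1$ and $\vars_2$ are disjoint, so the index set $\vars_1 \cup \vars_2$ of the sum defining $\cb^\con\pare{\ass_1 \cup \ass_2}$ splits into $\vars_1$ and $\vars_2$; moreover, by definition of the union assignment, $\pare{\ass_1 \cup \ass_2}(\var) = \ass_1(\var)$ for $\var \in \vars_1$ and $\pare{\ass_1 \cup \ass_2}(\var) = \ass_2(\var)$ for $\var \in \vars_2$. Collecting the two groups of terms gives $\cb^\con\pare{\ass_1 \cup \ass_2} = \sum_{\var \in \vars_1} g^\con_\var\pare{\ass_1(\var)} + \sum_{\var \in \vars_2} g^\con_\var\pare{\ass_2(\var)} = \cb^\con\pare{\ass_1} + \cb^\con\pare{\ass_2}$, as claimed.

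There is no real obstacle here. The only points that deserve a moment's care are the empty-set edge cases ($\vars' = \emptyset$, $\vars_1 = \emptyset$, or $\vars_2 = \emptyset$), where the relevant sums are empty and evaluate to $0$, and the fact that it is precisely the normalization built into the translated system --- namely $\min\bra{g^\con_\var(d) \mid d \in \dom} = 0$ --- that makes the nonnegativity in part~(i) hold exactly rather than merely up to a constant shift.
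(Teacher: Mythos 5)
Your proposal is correct and follows exactly the same route as the paper's (very terse) proof: part~(i) reduces to the nonnegativity of each $g^\con_\var\pare{\ass(\var)}$ guaranteed by the translation, and part~(ii) is just splitting the defining sum over the disjoint index sets $\vars_1$ and $\vars_2$. The extra attention to empty-sum edge cases is harmless and consistent with the paper's conventions.
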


\begin{prf}
\ref{obs vb 1}.
Follows directly from $g^\con_\var(\ass(\var)) \in \Z_{\ge 0}$ for every $\var \in \vars$.

\ref{obs vb 3}.
Follows directly from the definition of constraint bound.
\end{prf}

We will often use the next simple result.
The first few times we will employ it, we will use it with $c=0$.
However, later on we will need the more general version presented below.
\begin{lemma}
\label{lem 3min}
For all nonnegative $a,b,c,\gamma$, the following identity holds:
\begin{align*}
\min\bra{a+b,\gamma - c} = \min\bra{a+\min\bra{b,\gamma},\gamma - c}.
\end{align*}
\end{lemma}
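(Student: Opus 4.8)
The plan is to prove the identity by a direct case analysis on whether $b$ lies below or above the threshold $\gamma$; no induction or auxiliary machinery is needed.

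\emph{Case 1: $b \le \gamma$.} Here $\min\bra{b,\gamma} = b$, so the right-hand side immediately collapses to $\min\bra{a+b,\gamma-c}$, which is literally the left-hand side. The identity holds in this case without invoking any sign hypothesis.

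\emph{Case 2: $b > \gamma$.} Here $\min\bra{b,\gamma} = \gamma$, so the right-hand side equals $\min\bra{a+\gamma,\gamma-c}$. Since $c \ge 0$ and $a \ge 0$ we have $\gamma - c \le \gamma \le a+\gamma$, hence the right-hand side equals $\gamma - c$. For the left-hand side, from $b > \gamma$ and $a \ge 0$ we get $a+b > \gamma \ge \gamma - c$, so $\min\bra{a+b,\gamma-c} = \gamma - c$ as well. The two sides therefore agree, completing the proof.

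I do not anticipate any genuine obstacle: the only point requiring a moment of care is bookkeeping which nonnegativity assumptions are actually used — only $a \ge 0$ and $c \ge 0$, and only in the second case — but since the lemma hypothesizes that all of $a,b,c,\gamma$ are nonnegative, this causes no difficulty.
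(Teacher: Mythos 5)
Your proof is correct, and it rests on the same two facts the paper uses (only $a\ge 0$ and $c\ge 0$ are needed, to get $\gamma - c \le a+\gamma$); the paper just packages the argument more compactly by writing $a+\min\bra{b,\gamma}=\min\bra{a+b,a+\gamma}$ and dropping the redundant middle term from the resulting three-way minimum, rather than splitting into the cases $b\le\gamma$ and $b>\gamma$. This is a cosmetic difference only; your case analysis is a valid proof of the lemma.
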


\begin{prf}
\begin{align*}
\min\bra{a+\min\bra{b,\gamma},\gamma - c}
& = 
\min\bra{a+b,a+\gamma,\gamma - c} \\
& = \min\bra{a+b,\gamma - c}.
\end{align*}
\end{prf}

\subsection{Structure of $\proj(v)$}

The next result is at the heart of the relationship between sets $\proj(v)$ in a branch decomposition.
\begin{lemma}
\label{lem phi ass}
Consider a separable system $\pare{\vars,\dom,\consin,\consge}$, let $\cons := \consin \cup \consge$, let $T$ be a branch decomposition of $\vars \cup \cons$, and let $v$ be an inner vertex of $T$ with children $v_1$ and $v_2$. 
Let $\ass : \vars_v \to \dom$, and let $\ass_1$ and $\ass_2$ denote the restrictions of $\ass$ to $\vars_{v_1}$ and $\vars_{v_2}$, respectively.
Then,
\begin{align*}
\pare{\overline{\cons_v} / \ass}^\con
=
\min\bra{\pare{\overline{\cons_{v_1}} / \ass_1}^\con + \pare{\overline{\cons_{v_2}} / \ass_2}^\con, \gamma^\con}
\qquad \forall \con \in \overline{\cons_v}.
\end{align*}
\end{lemma}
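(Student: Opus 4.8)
The plan is to unfold the definitions on both sides and reduce the claimed identity to the elementary arithmetic fact isolated in \cref{lem 3min}. First I would record the structural facts: since $v$ is an inner vertex of the binary tree $T$ with children $v_1,v_2$, the leaves of $T_v$ are partitioned into those of $T_{v_1}$ and those of $T_{v_2}$; hence $\vars_v = \vars_{v_1} \cup \vars_{v_2}$ and $\cons_v = \cons_{v_1} \cup \cons_{v_2}$, both disjoint unions. In particular $\overline{\cons_v} \subseteq \overline{\cons_{v_1}} \cap \overline{\cons_{v_2}}$, so for $\con \in \overline{\cons_v}$ the two quantities $\pare{\overline{\cons_{v_1}}/\ass_1}^\con$ and $\pare{\overline{\cons_{v_2}}/\ass_2}^\con$ on the right-hand side are well defined, and $\ass = \ass_1 \cup \ass_2$ because $\vars_v = \vars_{v_1} \cup \vars_{v_2}$ is a disjoint union.

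Next I would expand both sides using the definition $\pare{\cons'/\ass}^\con = \min\bra{\cb^\con(\ass),\gamma^\con}$. On the left, \cref{obs vb}\ref{obs vb 3} gives $\cb^\con(\ass) = \cb^\con(\ass_1) + \cb^\con(\ass_2)$, so the left-hand side equals $\min\bra{\cb^\con(\ass_1)+\cb^\con(\ass_2),\gamma^\con}$. On the right, it equals $\min\bra{\min\bra{\cb^\con(\ass_1),\gamma^\con}+\min\bra{\cb^\con(\ass_2),\gamma^\con},\gamma^\con}$. Writing $a := \cb^\con(\ass_1)$, $b := \cb^\con(\ass_2)$, $\gamma := \gamma^\con$ — all nonnegative by \cref{obs vb}\ref{obs vb 1} and the nonnegativity of $\gamma^\con$ established when passing to the translated system — the claim reduces to the identity $\min\bra{a+b,\gamma} = \min\bra{\min\bra{a,\gamma}+\min\bra{b,\gamma},\gamma}$.

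Finally I would prove this identity by two applications of \cref{lem 3min} with $c=0$. The first application yields $\min\bra{a+b,\gamma} = \min\bra{a+\min\bra{b,\gamma},\gamma}$. Since $\min\bra{b,\gamma} \ge 0$, a second application of \cref{lem 3min}, this time treating $\min\bra{b,\gamma}$ as the first parameter and $a$ as the second, gives $\min\bra{\min\bra{b,\gamma}+a,\gamma} = \min\bra{\min\bra{b,\gamma}+\min\bra{a,\gamma},\gamma}$. Chaining the two equalities proves the identity and hence the lemma. I do not expect a genuine obstacle here; the only point requiring a little care is verifying that the nonnegativity hypotheses of \cref{lem 3min} hold at each step — in particular that the capped quantity $\min\bra{b,\gamma}$ is legitimately reused as a nonnegative argument in the second application.
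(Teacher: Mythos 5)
Your proposal is correct and follows essentially the same route as the paper: expand both sides via the definition of $\cons'/\ass$, use the additivity of $\cb^\con$ from \cref{obs vb}, and reduce to the capping identity handled by \cref{lem 3min} with $c=0$. The only difference is cosmetic — you spell out the two applications of \cref{lem 3min} (one per summand) that the paper compresses into a single cited step, which is a slightly more careful presentation of the same argument.
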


\begin{prf}
Let $\con \in \overline{\cons_v}$ and observe that $\overline{\cons_v} = \overline{\cons_{v_1}} \cap \overline{\cons_{v_2}}$. 
We have 
\begin{align*}
\pare{\overline{\cons_v} / \ass}^\con
& =
\min\bra{\cb^\con\pare{\ass},\gamma^\con} \\
& =
\min\bra{\cb^\con\pare{\ass_1}+\cb^\con\pare{\ass_2},\gamma^\con} 
&& \text{(from \cref{obs vb} \ref{obs vb 3})} \\
& =
\min\bra{\min\bra{\cb^\con\pare{\ass_1},\gamma^\con} + \min\bra{\cb^\con\pare{\ass_2},\gamma^\con}, \gamma^\con} 
&& \text{(from \cref{lem 3min})} \\
& =
\min\bra{\pare{\overline{\cons_{v_1}} / \ass_1}^\con + \pare{\overline{\cons_{v_2}} / \ass_2}^\con, \gamma^\con}.
\end{align*}
\end{prf}

\cref{lem phi ass} suggests the following relationship between $\Phi \in \proj(v)$, $\Phi_1 \in \proj(v_1)$, and $\Phi_2 \in \proj(v_2)$:
\begin{align}
\label{cond l1}
\tag{L1}
\Phi^\con & = \min\bra{\Phi_1^\con + \Phi_2^\con, \gamma^\con}
&& \forall \con \in \overline{\cons_v}.
\end{align}

\begin{lemma}
\label{lem phi proj}
Consider a separable system $\pare{\vars,\dom,\consin,\consge}$, let $\cons := \consin \cup \consge$, let $T$ be a branch decomposition of $\vars \cup \cons$, and let $v$ be an inner vertex of $T$ with children $v_1$ and $v_2$. 
If $\Phi \in \proj(v)$, then there exist $\Phi_1 \in \proj(v_1)$ and $\Phi_2 \in \proj(v_2)$ such that \eqref{cond l1} holds. 
Vice versa, if $\Phi_1 \in \proj(v_1)$ and $\Phi_2 \in \proj(v_2)$, then there is a unique $\Phi \in \proj(v)$ such that \eqref{cond l1} holds.
\end{lemma}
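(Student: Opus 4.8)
The plan is to obtain both directions as essentially immediate consequences of \cref{lem phi ass}, the only genuine work being to keep straight the index sets on which the maps $\Phi$, $\Phi_1$, $\Phi_2$ are defined. First I would record the structural facts coming from the branch decomposition: since $T_v$ is obtained by joining $T_{v_1}$ and $T_{v_2}$ at $v$, the leaf sets partition as $\vars_v = \vars_{v_1} \cup \vars_{v_2}$ and $\cons_v = \cons_{v_1} \cup \cons_{v_2}$, both unions being disjoint. Consequently $\overline{\cons_v} = \overline{\cons_{v_1}} \cap \overline{\cons_{v_2}}$ (as already noted in the proof of \cref{lem phi ass}), so in particular $\overline{\cons_v} \subseteq \overline{\cons_{v_1}}$ and $\overline{\cons_v} \subseteq \overline{\cons_{v_2}}$; hence for every $\con \in \overline{\cons_v}$ the values $\Phi_1^\con$ and $\Phi_2^\con$ are defined and condition \eqref{cond l1} is meaningful.

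For the forward direction I would take an assignment $\ass : \vars_v \to \dom$ witnessing $\Phi = \overline{\cons_v}/\ass$, let $\ass_1$, $\ass_2$ be its restrictions to $\vars_{v_1}$, $\vars_{v_2}$, and set $\Phi_1 := \overline{\cons_{v_1}}/\ass_1 \in \proj(v_1)$ and $\Phi_2 := \overline{\cons_{v_2}}/\ass_2 \in \proj(v_2)$. Then for every $\con \in \overline{\cons_v}$, \cref{lem phi ass} applied to $\ass$, $\ass_1$, $\ass_2$ gives exactly $\Phi^\con = \min\bra{\Phi_1^\con + \Phi_2^\con, \gamma^\con}$, which is \eqref{cond l1}.

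For the converse I would start from $\Phi_1 \in \proj(v_1)$ and $\Phi_2 \in \proj(v_2)$, pick witnessing assignments $\ass_1 : \vars_{v_1} \to \dom$ and $\ass_2 : \vars_{v_2} \to \dom$, and use disjointness of $\vars_{v_1}$ and $\vars_{v_2}$ to form $\ass := \ass_1 \cup \ass_2 : \vars_v \to \dom$; I then define $\Phi := \overline{\cons_v}/\ass \in \proj(v)$, and \cref{lem phi ass} again yields \eqref{cond l1} for this $\Phi$, establishing existence. For uniqueness, I would observe that \eqref{cond l1} pins down $\Phi^\con$ for every $\con \in \overline{\cons_v}$ purely in terms of $\Phi_1$, $\Phi_2$ and the fixed constants $\gamma^\con$; hence any two elements of $\proj(v)$ satisfying \eqref{cond l1} with the same $\Phi_1$, $\Phi_2$ coincide on all of $\overline{\cons_v}$ and are therefore equal.

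There is no real obstacle here. The one point requiring a moment's care is bookkeeping: the maps in $\proj(v)$, $\proj(v_1)$, $\proj(v_2)$ live on different domains, so one must first check, via the partition of the leaf sets, that \eqref{cond l1} is well-posed. A second, purely cosmetic, point is that in the converse direction a given $\Phi_1$ may be realized by several assignments $\ass_1$ (and likewise for $\Phi_2$), but \cref{lem phi ass} shows every such choice produces the same $\Phi$ through \eqref{cond l1}, so no consistency issue arises.
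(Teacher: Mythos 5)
Your proposal is correct and follows essentially the same route as the paper's proof: both directions are obtained by choosing witnessing assignments (restricting in the forward direction, gluing via $\ass_1 \cup \ass_2$ in the converse) and invoking \cref{lem phi ass}, with uniqueness following because \eqref{cond l1} determines $\Phi^\con$ pointwise from $\Phi_1$, $\Phi_2$, and $\gamma^\con$. The additional bookkeeping remarks about $\overline{\cons_v} = \overline{\cons_{v_1}} \cap \overline{\cons_{v_2}}$ and the independence of the resulting $\Phi$ from the choice of witnesses are accurate and consistent with what the paper implicitly relies on.
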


\begin{prf}
Let $\Phi \in \proj(v)$.
Then, there exists $\ass : \vars_v \to \dom$ such that $\Phi = \overline{\cons_v} / \ass$.
Observe that $\vars_v$ is the disjoint union of sets $\vars_{v_1}$ and $\vars_{v_2}$.
Let $\ass_1$ and $\ass_2$ denote the restrictions of $\ass$ to $\vars_{v_1}$ and $\vars_{v_2}$, respectively.
Let $\Phi_1 := \overline{\cons_{v_1}} / \ass_1 \in \proj(v_1)$ and $\Phi_2 := \overline{\cons_{v_2}} / \ass_2 \in \proj(v_2)$.
Then, \cref{lem phi ass} implies that \eqref{cond l1} holds.

Let $\Phi_1 \in \proj(v_1)$ and $\Phi_2 \in \proj(v_2)$.
Then, there exist $\ass_1 : \vars_{v_1} \to \dom$ and $\ass_2 : \vars_{v_2} \to \dom$ such that $\Phi_1 = \overline{\cons_{v_1}} / \ass_1$ and $\Phi_2 = \overline{\cons_{v_2}} / \ass_2$.
Let $\ass := \ass_1 \cup \ass_2 : \vars_v \to \dom$, and define $\Phi := \overline{\cons_v} / \ass \in \proj(v)$.
Then, \cref{lem phi ass} implies that \eqref{cond l1} holds.
From \eqref{cond l1}, $\Phi$ is clearly unique.
\end{prf}

\subsection{Structure of $\proj(\overline{v})$}

The relationship between sets $\proj(\overline{v})$ in a branch decomposition is slightly more complex, since it also involves maps from $\proj(v)$.
The next result is at the heart of this relationship.

\begin{lemma}
\label{lem psi ass}
Consider a separable system $\pare{\vars,\dom,\consin,\consge}$, let $\cons := \consin \cup \consge$, let $T$ be a branch decomposition of $\vars \cup \cons$, and let $v$ be an inner vertex of $T$ with children $v_1$ and $v_2$. 
Let $\ass_1 : \overline{\vars_{v_1}} \to \dom$, and let $\ass$ and $\ass_2$ denote the restrictions of $\ass_1$ to $\overline{\vars_v}$ and $\vars_{v_2}$, respectively.
Then,
\begin{align*}
\pare{\cons_{v_1} / \ass_1}^\con
=
\min\bra{\pare{\cons_{v} / \ass}^\con + \pare{\overline{\cons_{v_2}} / \ass_2}^\con, \gamma^\con} 
\qquad \forall \con \in \cons_{v_1}.
\end{align*}
Symmetrically, let $\ass_2 : \overline{\vars_{v_2}} \to \dom$, and let $\ass$ and $\ass_1$ denote the restrictions of $\ass_2$ to $\overline{\vars_v}$ and $\vars_{v_1}$, respectively.
Then, 
\begin{align*}
\pare{\cons_{v_2} / \ass_2}^\con
=
\min\bra{\pare{\cons_{v} / \ass}^\con + \pare{\overline{\cons_{v_1}} / \ass_1}^\con, \gamma^\con} 
\qquad \forall \con \in \cons_{v_2}.
\end{align*}
\end{lemma}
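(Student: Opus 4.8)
The plan is to follow the proof of \cref{lem phi ass} essentially verbatim, once the relevant sets have been sorted out; I establish the first identity, and the second then follows by exchanging the roles of $v_1$ and $v_2$. First I would record the set relations. Since $T$ is a binary tree and $v$ has children $v_1$ and $v_2$, the leaves of $T_v$ are partitioned into those of $T_{v_1}$ and those of $T_{v_2}$, so $\cons_v = \cons_{v_1} \cup \cons_{v_2}$ and $\vars_v = \vars_{v_1} \cup \vars_{v_2}$ are disjoint unions. In particular, for $\con \in \cons_{v_1}$ we have $\con \in \cons_v$ together with $\con \notin \cons_{v_2}$, i.e.\ $\con \in \overline{\cons_{v_2}}$, so all three quotient maps appearing in the identity are defined at $\con$. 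On the variable side, $\overline{\vars_{v_1}} = \vars \setminus \vars_{v_1} = \overline{\vars_v} \cup \vars_{v_2}$ is a disjoint union, and under this splitting $\ass_1 = \ass \cup \ass_2$ with $\ass : \overline{\vars_v} \to \dom$ and $\ass_2 : \vars_{v_2} \to \dom$.

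Next, fixing $\con \in \cons_{v_1}$, I would unfold the definition of $\cons_{v_1} / \ass_1$ and split the constraint bound using \cref{obs vb} \ref{obs vb 3} on $\ass_1 = \ass \cup \ass_2$, obtaining $\pare{\cons_{v_1} / \ass_1}^\con = \min\bra{\cb^\con\pare{\ass} + \cb^\con\pare{\ass_2}, \gamma^\con}$. Applying \cref{lem 3min} with $c = 0$ to cap $\cb^\con\pare{\ass_2}$ at $\gamma^\con$, and then, after commuting the summands, applying it once more to cap $\cb^\con\pare{\ass}$ at $\gamma^\con$, rewrites this as $\min\bra{\min\bra{\cb^\con\pare{\ass}, \gamma^\con} + \min\bra{\cb^\con\pare{\ass_2}, \gamma^\con}, \gamma^\con}$. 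Since $\con \in \cons_v$ the first inner minimum equals $\pare{\cons_v / \ass}^\con$, and since $\con \in \overline{\cons_{v_2}}$ the second equals $\pare{\overline{\cons_{v_2}} / \ass_2}^\con$, which is exactly the claimed identity. The symmetric statement is obtained word for word after swapping $v_1 \leftrightarrow v_2$, now using $\overline{\vars_{v_2}} = \overline{\vars_v} \cup \vars_{v_1}$ and the membership $\con \in \overline{\cons_{v_1}}$ for $\con \in \cons_{v_2}$.

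I do not expect any genuine obstacle: the whole argument is bookkeeping about the branch decomposition plus two invocations of \cref{lem 3min}, mirroring \cref{lem phi ass}. The only point that deserves a moment of care is the membership check $\con \in \overline{\cons_{v_2}}$ (respectively $\con \in \overline{\cons_{v_1}}$), which guarantees that the right-hand side of the identity is well defined; this is immediate from the disjointness $\cons_v = \cons_{v_1} \cup \cons_{v_2}$ noted at the start.
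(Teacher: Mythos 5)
Your proposal is correct and follows essentially the same route as the paper: split $\ass_1$ as $\ass \cup \ass_2$ over the disjoint union $\overline{\vars_{v_1}} = \overline{\vars_v} \cup \vars_{v_2}$, apply \cref{obs vb}~\ref{obs vb 3} to split the constraint bound, and use \cref{lem 3min} (with $c=0$) to cap each summand at $\gamma^\con$, identifying the inner minima via $\cons_{v_1} = \cons_v \cap \overline{\cons_{v_2}}$. The only cosmetic difference is that you spell out the two invocations of \cref{lem 3min} where the paper cites it once for the combined step.
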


\begin{prf}
We only prove the first part of the statement, since the second part is symmetric.
Let $\con \in \cons_{v_1}$ and observe that $\cons_{v_1} = \cons_v \cap \overline{\cons_{v_2}}$.
We have
\begin{align*}
\pare{\cons_{v_1} / \ass_1}^\con
& =
\min\bra{\cb^\con\pare{\ass_1},\gamma^\con} \\
& =
\min\bra{\cb^\con\pare{\ass}+\cb^\con\pare{\ass_2},\gamma^\con} 
&& \text{(from \cref{obs vb} \ref{obs vb 3})} \\
& = 
\min\bra{\min\bra{\cb^\con\pare{\ass},\gamma^\con} + \min\bra{\cb^\con\pare{\ass_2},\gamma^\con}, \gamma^\con}
&& \text{(from \cref{lem 3min})} \\
& = 
\min\bra{\pare{\cons_{v} / \ass}^\con + \pare{\overline{\cons_{v_2}} / \ass_2}^\con, \gamma^\con}.
\end{align*}
\end{prf}

\cref{lem psi ass} suggests two more relationships.
The first one between $\Psi \in \proj(\overline{v})$, $\Psi_1 \in \proj(\overline{v_1})$, and $\Phi_2 \in \proj(v_2)$:
\begin{align}
\label{cond l2}
\tag{L2}
\Psi_1^\con & = \min\bra{\Psi^\con + \Phi_2^\con, \gamma^\con} && \forall \con \in \cons_{v_1}.
\end{align}
The second one between $\Psi \in \proj(\overline{v})$, $\Phi_1 \in \proj(v_1)$, and $\Psi_2 \in \proj(\overline{v_2})$:
\begin{align}
\label{cond l3}
\tag{L3}
\Psi_2^\con & = \min\bra{\Psi^\con + \Phi_1^\con, \gamma^\con} && \forall \con \in \cons_{v_2}.
\end{align}

\begin{lemma}
\label{lem psi proj}
Consider a separable system $\pare{\vars,\dom,\consin,\consge}$, let $\cons := \consin \cup \consge$, let $T$ be a branch decomposition of $\vars \cup \cons$, and let $v$ be an inner vertex of $T$ with children $v_1$ and $v_2$. 
If $\Psi_1 \in \proj(\overline{v_1})$, then there exist $\Psi \in \proj(\overline{v})$ and $\Phi_2 \in \proj(v_2)$ such that \eqref{cond l2} holds.
Vice versa, if $\Psi \in \proj(\overline{v})$ and $\Phi_2 \in \proj(v_2)$, then there is a unique $\Psi_1 \in \proj(\overline{v_1})$ such that \eqref{cond l2} holds.
Symmetrically, if $\Psi_2 \in \proj(\overline{v_2})$, then there exist $\Psi \in \proj(\overline{v})$ and $\Phi_1 \in \proj(v_1)$ such that \eqref{cond l3} holds.
Vice versa, if $\Psi \in \proj(\overline{v})$ and $\Phi_1 \in \proj(v_1)$, then there is a unique $\Psi_2 \in \proj(\overline{v_2})$ such that \eqref{cond l3} holds.
\end{lemma}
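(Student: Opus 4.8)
The plan is to mirror the proof of \cref{lem phi proj}, using \cref{lem psi ass} as the pointwise engine and being careful about the relevant set identities. For an inner vertex $v$ with children $v_1,v_2$, the two facts I will rely on are already recorded in the proof of \cref{lem psi ass}: the set $\overline{\vars_{v_1}}$ is the disjoint union $\overline{\vars_v} \cup \vars_{v_2}$ (since $\vars_{v_1}$ and $\vars_{v_2}$ partition $\vars_v$), and $\cons_{v_1} = \cons_v \cap \overline{\cons_{v_2}}$; symmetrically $\overline{\vars_{v_2}} = \overline{\vars_v} \cup \vars_{v_1}$ and $\cons_{v_2} = \cons_v \cap \overline{\cons_{v_1}}$.

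For the forward implication I would start from $\Psi_1 \in \proj(\overline{v_1})$, pick an assignment $\ass_1 : \overline{\vars_{v_1}} \to \dom$ with $\Psi_1 = \cons_{v_1}/\ass_1$, and split it along the disjoint union above into its restrictions $\ass : \overline{\vars_v} \to \dom$ and $\ass_2 : \vars_{v_2} \to \dom$. Setting $\Psi := \cons_v/\ass$, which lies in $\proj(\overline{v})$ by definition of that set, and $\Phi_2 := \overline{\cons_{v_2}}/\ass_2 \in \proj(v_2)$, the first identity of \cref{lem psi ass} is literally \eqref{cond l2}.

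For the backward implication, given $\Psi \in \proj(\overline{v})$ and $\Phi_2 \in \proj(v_2)$, I would choose $\ass : \overline{\vars_v} \to \dom$ and $\ass_2 : \vars_{v_2} \to \dom$ realizing them, form $\ass_1 := \ass \cup \ass_2 : \overline{\vars_{v_1}} \to \dom$, and put $\Psi_1 := \cons_{v_1}/\ass_1 \in \proj(\overline{v_1})$; again \cref{lem psi ass} yields \eqref{cond l2}. Uniqueness is immediate: \eqref{cond l2} expresses $\Psi_1^\con$ as a fixed function of $\Psi^\con$ and $\Phi_2^\con$ for every $\con \in \cons_{v_1}$, so any two maps in $\proj(\overline{v_1})$ satisfying \eqref{cond l2} with the same $\Psi$ and $\Phi_2$ agree on all of $\cons_{v_1}$ and hence coincide. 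The symmetric statements involving $\Psi_2$, $\Phi_1$, and \eqref{cond l3} follow verbatim after interchanging the roles of $v_1$ and $v_2$ and invoking the second identity of \cref{lem psi ass}.

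I do not anticipate a genuine obstacle: the only point requiring care is the bookkeeping of the index sets $\overline{\vars_{v_1}} = \overline{\vars_v}\cup\vars_{v_2}$ and $\cons_{v_1} = \cons_v \cap \overline{\cons_{v_2}}$, which is exactly what makes the restrictions well-defined and lets \cref{lem psi ass} apply without having to track any additional constraints.
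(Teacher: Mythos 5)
Your proposal is correct and follows essentially the same route as the paper's proof: decompose $\overline{\vars_{v_1}}$ as the disjoint union of $\overline{\vars_v}$ and $\vars_{v_2}$, realize the maps by assignments, apply \cref{lem psi ass} in both directions, and read uniqueness directly off \eqref{cond l2}. No gaps.
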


\begin{prf}
We only prove the first part of the statement, since the second part is symmetric.

Let $\Psi_1 \in \proj(\overline{v_1})$.
Then, there exists $\ass_1 : \overline{\vars_{v_1}} \to \dom$ such that $\Psi_1 = \cons_{v_1} / \ass_1$.
Observe that $\overline{\vars_{v_1}}$ is the union of disjoint sets $\overline{\vars_v}$ and $\vars_{v_2}$.
Let $\ass$ and $\ass_2$ denote the restrictions of $\ass_1$ to $\overline{\vars_v}$ and $\vars_{v_2}$, respectively.
Let $\Psi := \cons_{v} / \ass \in \proj(\overline{v})$ and $\Phi_2 := \overline{\cons_{v_2}} / \ass_2 \in \proj(v_2)$.
Then, \cref{lem psi ass} implies that \eqref{cond l2} holds.

Let $\Psi \in \proj(\overline{v})$ and $\Phi_2 \in \proj(v_2)$.
Then, there exist $\ass : \overline{\vars_v} \to \dom$ and $\ass_2 : \vars_{v_2} \to \dom$ such that $\Psi = \cons_v / \ass$ and $\Phi_2 = \overline{\cons_{v_2}} / \ass_2$.
Let $\ass_1 := \ass \cup \ass_2 : \overline{\vars_{v_1}} \to \dom$, and define $\Psi_1 := \cons_{v_1} / \ass_1 \in \proj(\overline{v_1})$.
Then, \cref{lem psi ass} implies that \eqref{cond l2} holds.
From \eqref{cond l2}, $\Psi_1$ is clearly unique.
\end{prf}

\subsection{Constructing all projections}

In this section, we use the structural results in \cref{lem phi proj,lem psi proj} to construct efficiently all sets $\proj(v)$ and $\proj(\overline v)$ in a branch decomposition.
We begin with some projections that have a very simple structure.

\begin{remark}
\label{rem proj}
Consider a separable system $\pare{\vars,\dom,\consin,\consge}$, let $\cons := \consin \cup \consge$, and let $T$ be a branch decomposition of $\vars \cup \cons$.
\begin{enumerate}[label=(\roman*)]
\item
\label{rem proj constraint-leaf}
\ul{$\proj(l)$, for a leaf $l$ of $T$ corresponding to a constraint $\con \in \cons$.}
We have $\vars_l = \emptyset$, thus there is only one assignment from $\vars_l$ to $\dom$, the empty assignment $\epsilon$.
Therefore, $\proj(l)$ contains only one map.
Because $\overline{\cons_l} = \cons \setminus \bra{\con}$, this map is given by $(\cons \setminus \bra{\con}) / \epsilon$.

\item
\label{rem proj variable-leaf}
\ul{$\proj(l)$, for a leaf $l$ of $T$ corresponding to a variable $\var \in \vars$.}
We have $\vars_l = \bra{\var}$, thus there are $\card{\dom}$ assignment from $\vars_l$ to $\dom$.
Therefore, $\proj(l)$ contains at most $\card{\dom}$ maps.
Because $\overline{\cons_l} = \cons$, they are of the form $\cons / \ass$, for every assignment $\ass : \bra{\var} \to \dom$.

\item
\label{rem proj root phi}
\ul{$\proj(r)$, for the root $r$ of $T$.}
Since $\overline{\cons_r} = \emptyset$, $\proj(r)$ contains only the empty map $\epsilon : \emptyset \to \Z_{\ge 0}$.

\item
\label{rem proj root psi}
\ul{$\proj(\overline{r})$, for the root $r$ of $T$.}
Since $\overline{\vars_r} = \emptyset$, $\proj(\overline{r})$ contains only one map.
Because $\cons_r = \cons$, this map is given by $\cons / \epsilon$.
\end{enumerate}
\end{remark}


\begin{proposition}
\label{prop all proj}
Consider a separable system $\pare{\vars,\dom,\consin,\consge}$, let $\cons := \consin \cup \consge$, and let $T$ be a branch decomposition of $\vars \cup \cons$ of projection-width $\pwidth$.
There is an algorithm that computes $\proj(v)$ and $\proj(\overline{v})$, for every vertex $v$ of $T$, in time 
\begin{align*}
O\pare{\pwidth^2 \log(\pwidth) \pare{\card{\vars}+\card{\cons}} \card{\cons} + \card{\vars}\card{\cons}\card{\dom}\log\pare{\card{\dom}}}.
\end{align*}
\end{proposition}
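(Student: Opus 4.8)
The plan is to process the branch decomposition $T$ in two passes. First, a bottom-up pass computes $\proj(v)$ for every vertex $v$, using \cref{lem phi proj}. Second, a top-down pass computes $\proj(\overline{v})$ for every vertex, using \cref{lem psi proj} together with the already-computed $\proj(v)$'s. The base cases for both passes are handled by \cref{rem proj}: leaves give $\proj(l)$ directly (at most $\card{\dom}$ maps for a variable-leaf, one map for a constraint-leaf), and the root gives $\proj(\overline{r}) = \bra{\cons/\epsilon}$.

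For the bottom-up pass, at an inner vertex $v$ with children $v_1, v_2$, \cref{lem phi proj} says that $\proj(v)$ is exactly the set of maps $\Phi$ obtained via \eqref{cond l1} from pairs $(\Phi_1,\Phi_2) \in \proj(v_1)\times\proj(v_2)$. So I would iterate over all such pairs, compute the resulting $\Phi$ by the formula $\Phi^\con = \min\bra{\Phi_1^\con+\Phi_2^\con,\gamma^\con}$ for each $\con\in\overline{\cons_v}$, and collect the distinct maps into $\proj(v)$. The naive bound is $\card{\proj(v_1)}\cdot\card{\proj(v_2)}\le\pwidth^2$ pairs, each costing $O(\card{\overline{\cons_v}})\le O(\card{\cons})$ arithmetic operations to form the candidate map, plus the cost of deduplication. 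To deduplicate efficiently, I would keep each map as a fixed-length vector over $\overline{\cons_v}$ and sort the $\pwidth^2$ candidate vectors lexicographically; a comparison-based sort of $\pwidth^2$ items each of length $O(\card{\cons})$ costs $O(\pwidth^2\log(\pwidth^2)\cdot\card{\cons}) = O(\pwidth^2\log(\pwidth)\card{\cons})$ (the output has only $\pwidth$ distinct maps since that is the projection-width bound, but the intermediate list has $\pwidth^2$ entries). Summing over the $O(\card{\vars}+\card{\cons})$ inner vertices gives the first term $O(\pwidth^2\log(\pwidth)(\card{\vars}+\card{\cons})\card{\cons})$; the base cases for variable-leaves contribute the term $O(\card{\vars}\card{\cons}\card{\dom}\log(\card{\dom}))$, since each variable-leaf produces $\card{\dom}$ maps of length $\card{\cons}$ that must be formed and deduplicated by sorting.

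For the top-down pass, suppose $v$ is an inner vertex with children $v_1,v_2$, and $\proj(\overline{v})$ is already known. By the ``vice versa'' direction of \cref{lem psi proj}, every pair $(\Psi,\Phi_2)\in\proj(\overline{v})\times\proj(v_2)$ yields a unique $\Psi_1\in\proj(\overline{v_1})$ via \eqref{cond l2}, and by the first direction every element of $\proj(\overline{v_1})$ arises this way; symmetrically for $\proj(\overline{v_2})$ via \eqref{cond l3}. So $\proj(\overline{v_1})$ is computed by iterating over the $\le\pwidth^2$ pairs $(\Psi,\Phi_2)$, forming $\Psi_1^\con = \min\bra{\Psi^\con+\Phi_2^\con,\gamma^\con}$ for $\con\in\cons_{v_1}$, and deduplicating by sorting exactly as above; likewise for $\proj(\overline{v_2})$. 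The per-vertex cost and the sum over all vertices match the bottom-up analysis, so the overall bound is as claimed. One should also remember to precompute and store the constants $\gamma^\con$ for all $\con\in\cons$ at the outset (constant-time lookups thereafter).

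The main obstacle is the deduplication bookkeeping and making sure the running-time accounting is tight. In particular one must be careful that the length of the vectors being sorted at vertex $v$ is $O(\card{\cons})$ rather than something larger, that the number of candidate maps before deduplication is $O(\pwidth^2)$ (this is exactly where the projection-width hypothesis $\card{\proj(v_i)},\card{\proj(\overline{v_i})}\le\pwidth$ is used), and that the variable-leaf base cases — the only place $\card{\dom}$ enters — are charged correctly to the second term. A secondary subtlety is verifying that the two passes can indeed be done in the stated order: the top-down pass needs $\proj(v_2)$ and $\proj(v_1)$, which are produced by the bottom-up pass, so running bottom-up first and storing all $\proj(v)$ suffices; the total storage is $O(\pwidth\card{\cons})$ per vertex, which is within budget. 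Everything else is a routine application of \cref{lem phi proj,lem psi proj,rem proj}.
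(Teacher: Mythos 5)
Your proposal is correct and follows essentially the same approach as the paper's proof: a bottom-up pass for $\proj(v)$ via \cref{lem phi proj} and \eqref{cond l1}, a top-down pass for $\proj(\overline{v})$ via \cref{lem psi proj} and \eqref{cond l2}--\eqref{cond l3}, leaves and root handled by \cref{rem proj}, and deduplication by lexicographic sorting of the $O(\pwidth^2)$ candidate maps, with the same running-time accounting. The only detail not spelled out is the $O(\card{\cons}^2)$ cost of the constraint-leaves, which is in any case dominated by the first term.
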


\begin{prf}
The construction of the sets $\proj(v)$, for every vertex $v$ of $T$, is performed in a bottom up manner.

\textbf{Leaves corresponding to constraints.}
For every leaf $l$ of $T$ corresponding to a constraint $\con \in \cons$, we know from \cref{rem proj} \ref{rem proj constraint-leaf} that $\proj(l)$ contains only one map, and it can be constructed in time $O\pare{\card{\cons}}$.
Since there are $\card{\cons}$ leaves corresponding to constraints, they require $O(\card{\cons}^2)$ time.

\textbf{Leaves corresponding to variables.}
For every leaf $l$ of $T$ corresponding to a variable $\var \in \vars$, we know from \cref{rem proj} \ref{rem proj variable-leaf} that $\proj(l)$ contains at most $\card{\dom}$ maps, and they can be constructed, possibly with duplicates, in time $O\pare{\card{\cons}\card{\dom}}$.
We sort the obtained $\card{\dom}$ maps lexicographically in time $O\pare{\card{\cons}\card{\dom}\log\pare{\card{\dom}}}$, and then we delete duplicates in time $O\pare{\card{\cons}\card{\dom}}$.
Therefore, for every leaf $l$ of $T$, the set $\proj(l)$ can be constructed in time $O\pare{\card{\cons}\card{\dom}\log\pare{\card{\dom}}}$.
Since there are $\card{\vars}$ leaves corresponding to variables, they require $O\pare{\card{\vars}\card{\cons}\card{\dom}\log\pare{\card{\dom}}}$ time.

\textbf{Inner vertices.}
Consider an inner vertex $v$ of $T$ with children $v_1$ and $v_2$.
From \cref{lem phi proj}, every $\Phi \in \proj(v)$ can be constructed from one $\Phi_1 \in \proj(v_1)$ and one $\Phi_2 \in \proj(v_2)$ as in \eqref{cond l1}.
For each pair $\Phi_1, \Phi_2$, the construction requires $O\pare{\card{\cons}}$ time.
Since $T$ is of projection-width $\pwidth$, there are at most $\pwidth^2$ pairs, we can construct all maps in $\proj(v)$ in time $O\pare{\pwidth^2 \card{\cons}}$.
We then sort the obtained maps lexicographically in time $O\pare{\pwidth^2 \log(\pwidth) \card{\cons}}$, and then we delete duplicates in time $O\pare{\pwidth^2 \card{\cons}}$.
Therefore, for every inner vertex $v$ of $T$, the set $\proj(v)$ can be constructed in time $O\pare{\pwidth^2 \log(\pwidth) \card{\cons}}$.
Since there are $\card{\vars}+\card{\cons}-1$ inner vertices, they require $O\pare{\pwidth^2 \log(\pwidth) \pare{\card{\vars}+\card{\cons}} \card{\cons}}$ time.

\medskip

The construction of the sets $\proj(\overline{v})$, for every vertex $v$ of $T$, is performed in a top down manner.
For the root $r$ of $T$, the set $\proj(\overline{r})$ can be constructed, as in \cref{rem proj} \ref{rem proj root psi}, in time $O\pare{\card{\cons}}$.
Next, consider a vertex $v$ of $T$ with children $v_1$ and $v_2$.
From \cref{lem psi proj}, every $\Psi_1 \in \proj(\overline{v_1})$ can be constructed from one $\Psi \in \proj(\overline{v})$ and one $\Phi_2 \in \proj(v_2)$ as in \eqref{cond l2}.
For each pair $\Psi, \Phi_2$, the construction requires $O\pare{\card{\cons}}$ time.
Since $T$ is of projection-width $\pwidth$, there are at most $\pwidth^2$ pairs, and the total time required for $v_1$, including deleting duplicates, is $O\pare{\pwidth^2 \log(\pwidth) \card{\cons}}$.
Symmetrically, we can construct $\proj(\overline{v_2})$ using \eqref{cond l3}.
Since the total number of vertices of $T$ is $2\pare{\card{\vars}+\card{\cons}}-1$, the construction of the sets $\proj(\overline{v})$, for every vertex $v$ of $T$, requires $O\pare{\pwidth^2 \log(\pwidth) \pare{\card{\vars}+\card{\cons}} \card{\cons}}$ time.
\end{prf}

\section{Shapes}
\label{sec shapes}

Consider a separable system $\pare{\vars,\dom,\consin,\consge}$, let $\cons := \consin \cup \consge$, let $T$ be a branch decomposition of $\vars \cup \cons$, and let $v$ be a vertex of $T$.
A \emph{shape} (for $v$, with respect to $T$) is a pair
\begin{align*}
(\Phi,\Psi) 
\end{align*} 
such that $\Phi \in \proj(v)$ and $\Psi \in \proj(\overline v).$
In other words, $(\Phi,\Psi)$ is a shape if there exists $\ass : \vars_v \to \dom$ such that $\Phi = \overline{\cons_v} / \ass$, and there exists $\assb : \overline{\vars_v} \to \dom$ such that $\Psi = \cons_v / \assb$.
Observe that if $T$ is of projection-width $\pwidth$, then $\proj(v)$ and $\proj(\overline{v})$ have cardinality at most $\pwidth$, thus there are at most $\pwidth^2$ different shapes for $v$.

\paragraph{Linked shapes.}

We now define the concept of ``linked shapes'', which allows us to relate the shapes for an inner vertex $v$ to the shapes for its children $v_1$ and $v_2$.
Consider a separable system $\pare{\vars,\dom,\consin,\consge}$, let $\cons := \consin \cup \consge$, let $T$ be a branch decomposition of $\vars \cup \cons$, and let $v$ be an inner vertex of $T$ with children $v_1$ and $v_2$. 
We say that $(\Phi,\Psi)$, $(\Phi_1,\Psi_1)$, $(\Phi_2,\Psi_2)$ are \emph{linked shapes} for $v,v_1,v_2$, if they can be constructed as follows: 
First, let $\Psi \in \proj(\overline{v})$, $\Phi_1 \in \proj(v_1)$, and $\Phi_2 \in \proj(v_2)$ such that
\begin{align}
\label{cond l1 star}
\tag{L1$^*$}
\Phi_1^\con + \Phi_2^\con & \le \gamma^\con
&& \forall \con \in \overline{\cons_v} \cap \consin, \\
\label{cond l2 star}
\tag{L2$^*$}
\Psi^\con + \Phi_2^\con & \le \gamma^\con
&& \forall \con \in \cons_{v_1} \cap \consin, \\
\label{cond l3 star}
\tag{L3$^*$}
\Psi^\con + \Phi_1^\con & \le \gamma^\con
&& \forall \con \in \cons_{v_2} \cap \consin.
\end{align}
Then define $\Phi$, $\Psi_1$, and $\Psi_2$ according to \eqref{cond l1}, \eqref{cond l2}, and \eqref{cond l3}, respectively.
We also say that the above linked shapes \emph{originated from} $\Psi$, $\Phi_1$, $\Phi_2$.
Since $\Phi^\con, \Psi_1^\con, \Psi_2^\con,  \in \bra{0,1,\dots,\gamma^\con}$, \eqref{cond l1}, \eqref{cond l1 star}, \eqref{cond l2}, \eqref{cond l2 star}, \eqref{cond l3}, \eqref{cond l3 star} are equivalent to
\begin{align}
\label{cond l1 in}
\tag{L1$^\in$}
\Phi^\con & = \Phi_1^\con + \Phi_2^\con 
&& \forall \con \in \overline{\cons_v} \cap \consin, \\
\label{cond l2 in}
\tag{L2$^\in$}
\Psi_1^\con & = \Psi^\con + \Phi_2^\con
&& \forall \con \in \cons_{v_1} \cap \consin, \\
\label{cond l3 in}
\tag{L3$^\in$}
\Psi_2^\con & = \Psi^\con + \Phi_1^\con
&& \forall \con \in \cons_{v_2} \cap \consin, \\
\label{cond l1 ge}
\tag{L1$^\ge$}
\Phi^\con & = \min\bra{\Phi_1^\con + \Phi_2^\con, \gamma^\con} && \forall \con \in \overline{\cons_v} \cap \consge, \\
\label{cond l2 ge}
\tag{L2$^\ge$}
\Psi_1^\con & = \min\bra{\Psi^\con + \Phi_2^\con, \gamma^\con} && \forall \con \in \cons_{v_1} \cap \consge, \\
\label{cond l3 ge}
\tag{L3$^\ge$}
\Psi_2^\con & = \min\bra{\Psi^\con + \Phi_1^\con, \gamma^\con} && \forall \con \in \cons_{v_2} \cap \consge.
\end{align}
Note that linked shapes are indeed shapes.
In fact, \cref{lem phi proj} implies that $\Phi \in \proj(v)$, thus $(\Phi,\Psi)$ is a shape for $v$.
On the other hand, \cref{lem psi proj} implies $\Psi_1 \in \proj(\overline{v_1})$ and $\Psi_2 \in \proj(\overline{v_2})$, so $(\Phi_1,\Psi_1)$ is a shape for $v_1$ and $(\Phi_2,\Psi_2)$ is a shape for $v_2$.

\paragraph{Assignments of shape.}

Consider a separable system $\pare{\vars,\dom,\consin,\consge}$, let $\cons := \consin \cup \consge$, let $T$ be a branch decomposition of $\vars \cup \cons$, and let $v$ be a vertex of $T$.
Given a shape $(\Phi,\Psi)$ for $v$, we say that $\ass : \vars_v \to \dom$ has \emph{shape} $(\Phi,\Psi)$ if 
\begin{align}
\tag{S1}
\label{cond s1}
& \Phi^\con = \min\bra{\cb^\con\pare{\ass},\gamma^\con} && \forall \con \in \overline{\cons_v}, \\
\tag{S1$^*$}
\label{cond s1 c}
& \cb^\con\pare{\ass} \le \gamma^\con && \forall \con \in \overline{\cons_v} \cap \consin, \\
\tag{S2$^\in$}
\label{cond s2 in}
& \Psi^\con + \cb^\con\pare{\ass} \in \Gamma^\con && \forall \con \in \cons_v \cap \consin, \\
\tag{S2$^\ge$}
\label{cond s2 ge}
& \Psi^\con + \cb^\con\pare{\ass} \ge \gamma^\con && \forall \con \in \cons_v \cap \consge.
\end{align} 
Note that \eqref{cond s1} can be written in the form $\Phi = \overline{\cons_v} / \ass$.
Also, since $\Phi^\con \le \gamma^\con$, \eqref{cond s1} and \eqref{cond s1 c} are equivalent to
\begin{align}
\tag{S1$^\in$}
\label{cond s1 in}
& \Phi^\con = \cb^\con\pare{\ass} && \forall \con \in \overline{\cons_v} \cap \consin, \\
\tag{S1$^\ge$}
\label{cond s1 ge}
& \Phi^\con = \min\bra{\cb^\con\pare{\ass},\gamma^\con} && \forall \con \in \overline{\cons_v} \cap \consge.
\end{align}
Note that an assignment $\ass : \vars_v \to \dom$ can have more than one shape. 
If $\ass$ has shape $(\Phi_1,\Psi_1)$ and shape $(\Phi_2,\Psi_2)$, then it only implies $\Phi_1=\Phi_2=\overline{\cons_v} / \ass$.

The intuition behind the notion of ``assignment of shape'' is that if $\ass : \vars_v \to \dom$ has shape $(\Phi,\Psi)$, then it can be extended to an assignment satisfying all constraints in $\cons_v$ by combining it with $\assb : \overline{\vars_v} \to \dom$ such that $\cons_v / \assb \ge \Psi$.
The next result details how the notion of shape allows us to obtain the assignments that satisfy all the constraints in $\cons$.

\begin{remark}
\label{rem root vshape}
Consider a separable system $\pare{\vars,\dom,\consin,\consge}$, let $\cons := \consin \cup \consge$, let $T$ be a branch decomposition of $\vars \cup \cons$, and let $r$ be the root of $T$.
Consider the shape $(\epsilon, \cons / \epsilon)$ for $r$ (see \cref{rem proj} \ref{rem proj root phi} and \ref{rem proj root psi}), and
note that $\vars_r = \vars$, $\cons_r = \cons$, and $\overline{\cons_r} = \emptyset$.
Therefore, \eqref{cond s1 in}, \eqref{cond s1 ge}, \eqref{cond s2 in}, \eqref{cond s2 ge} imply that an assignment $\ass : \vars \to \dom$ has shape $(\epsilon, \cons / \epsilon)$ if and only if 
\begin{align*}
& \cb^\con\pare{\ass} \in \Gamma^\con && \forall \con \in \consin, \\
& \cb^\con\pare{\ass} \ge \gamma^\con && \forall \con \in \consge.
\end{align*}
Since
\begin{align*}
\cb^\con\pare{\ass}
= \sum_{\var \in \vars} g^\con_\var(\ass(\var)),
\end{align*}
this happens if and only if $\ass$ satisfies the constraints in $\cons$.
\end{remark}

In particular, \cref{rem root vshape} implies that there can be shapes $(\Phi,\Psi)$ for $v$ such that there is no $\ass : \vars_v \to \dom$ of shape $(\Phi,\Psi)$.

\subsection{From children to parent}

Our next goal is to relate the ``assignments of shape'' for an inner vertex $v$ to the ``assignments of shape'' for its children $v_1$ and $v_2$. 
First, we study this connection when traversing the tree in a bottom up manner.

\begin{lemma}
\label{lem up}
Consider a separable system $\pare{\vars,\dom,\consin,\consge}$, let $\cons := \consin \cup \consge$, let $T$ be a branch decomposition of $\vars \cup \cons$, and let $v$ be an inner vertex of $T$ with children $v_1$ and $v_2$. 
Let $(\Phi,\Psi)$, $(\Phi_1,\Psi_1)$, $(\Phi_2,\Psi_2)$ be linked shapes for $v,v_1,v_2$.
If $\ass_1 : \vars_{v_1} \to \dom$ has shape $(\Phi_1,\Psi_1)$ and $\ass_2 : \vars_{v_2} \to \dom$ has shape $(\Phi_2,\Psi_2)$, then $\ass := \ass_1 \cup \ass_2 : \vars_v \to \dom$ has shape $(\Phi,\Psi)$.
\end{lemma}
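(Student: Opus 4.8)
The plan is to verify, for $\ass := \ass_1 \cup \ass_2$, the four defining conditions of ``$\ass$ has shape $(\Phi,\Psi)$'' at $v$, namely \eqref{cond s1}, \eqref{cond s1 c}, \eqref{cond s2 in}, and \eqref{cond s2 ge}. The two tools that drive everything are: first, the additivity $\cb^\con(\ass) = \cb^\con(\ass_1) + \cb^\con(\ass_2)$ from \cref{obs vb} \ref{obs vb 3}, which is available because $\vars_v$ is the disjoint union of $\vars_{v_1}$ and $\vars_{v_2}$; and second, the index-set identities $\overline{\cons_v} = \overline{\cons_{v_1}} \cap \overline{\cons_{v_2}}$, $\cons_{v_1} = \cons_v \cap \overline{\cons_{v_2}}$, $\cons_{v_2} = \cons_v \cap \overline{\cons_{v_1}}$, and $\cons_v = \cons_{v_1} \cup \cons_{v_2}$ (a disjoint union), which tell us, constraint by constraint, which of the children's shape conditions and which linking relation to invoke. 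Since $\ass_1$ has shape $(\Phi_1,\Psi_1)$ and $\ass_2$ has shape $(\Phi_2,\Psi_2)$, all of \eqref{cond s1}, \eqref{cond s1 c}, \eqref{cond s1 in}, \eqref{cond s1 ge}, \eqref{cond s2 in}, \eqref{cond s2 ge} hold for $\ass_1$ at $v_1$ and for $\ass_2$ at $v_2$; and because the three shapes are linked, \eqref{cond l1}--\eqref{cond l3} hold together with their equivalent split forms \eqref{cond l1 in}--\eqref{cond l3 ge} and the starred inequalities.

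For \eqref{cond s1} and \eqref{cond s1 c}, fix $\con \in \overline{\cons_v}$; then $\con \in \overline{\cons_{v_1}}$ and $\con \in \overline{\cons_{v_2}}$, so \eqref{cond s1} for $\ass_1$ and $\ass_2$ gives $\Phi_i^\con = \min\bra{\cb^\con(\ass_i),\gamma^\con}$ for $i = 1,2$. Substituting into \eqref{cond l1} and applying \cref{lem 3min} twice (with $c=0$) collapses the nested minima to $\min\bra{\cb^\con(\ass_1)+\cb^\con(\ass_2),\gamma^\con} = \min\bra{\cb^\con(\ass),\gamma^\con}$, which is \eqref{cond s1}. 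When moreover $\con \in \consin$, condition \eqref{cond s1 c} for $\ass_1$ and $\ass_2$ forces $\Phi_i^\con = \cb^\con(\ass_i)$, and then \eqref{cond l1 star} yields $\cb^\con(\ass) = \Phi_1^\con + \Phi_2^\con \le \gamma^\con$, which is \eqref{cond s1 c}.

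For \eqref{cond s2 in} and \eqref{cond s2 ge}, I split $\cons_v = \cons_{v_1} \cup \cons_{v_2}$ and treat $\con \in \cons_{v_1}$; the case $\con \in \cons_{v_2}$ is entirely symmetric, using instead the data at $v_2$ and the relations \eqref{cond l3 in}, \eqref{cond l3 ge}. Note $\cons_{v_1} \subseteq \overline{\cons_{v_2}}$. If $\con \in \consin$, then \eqref{cond s1 c} for $\ass_2$ gives $\Phi_2^\con = \cb^\con(\ass_2)$, and combining \eqref{cond s2 in} for $\ass_1$ with \eqref{cond l2 in} (which reads $\Psi_1^\con = \Psi^\con + \Phi_2^\con$) gives $\Psi^\con + \cb^\con(\ass) = \Psi^\con + \cb^\con(\ass_1) + \cb^\con(\ass_2) = \Psi_1^\con + \cb^\con(\ass_1) \in \Gamma^\con$, which is \eqref{cond s2 in}. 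If $\con \in \consge$, then \eqref{cond s1 ge} for $\ass_2$ gives only $\Phi_2^\con \le \cb^\con(\ass_2)$, and \eqref{cond l2 ge} gives $\Psi_1^\con \le \Psi^\con + \Phi_2^\con$; hence from \eqref{cond s2 ge} for $\ass_1$,
\begin{align*}
\gamma^\con \le \Psi_1^\con + \cb^\con(\ass_1) \le \Psi^\con + \Phi_2^\con + \cb^\con(\ass_1) \le \Psi^\con + \cb^\con(\ass_2) + \cb^\con(\ass_1) = \Psi^\con + \cb^\con(\ass),
\end{align*}
which is \eqref{cond s2 ge}.

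The algebra is routine; the one place that needs care is the $\consge$ subcase just above, where the linking relations deliver only the inequalities $\Psi_1^\con \le \Psi^\con + \Phi_2^\con$ and $\Phi_2^\con \le \cb^\con(\ass_2)$ rather than equalities, so the chain of inequalities must be oriented in the correct direction. This, together with consistently invoking the $v_2$-version of each child condition (and the relations \eqref{cond l3}, \eqref{cond l3 in}, \eqref{cond l3 ge}) in the symmetric half of the $\cons_v$-split, is the main bookkeeping obstacle.
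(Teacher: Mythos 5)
Your proposal is correct and follows essentially the same argument as the paper: additivity of $\cb^\con$ over the disjoint split $\vars_v = \vars_{v_1} \cup \vars_{v_2}$, the index-set identities to decide which child condition and which linking relation apply to each constraint, and the same chain of (in)equalities for \eqref{cond s2 in} and \eqref{cond s2 ge} (yours is merely written in the reverse direction for the $\consge$ case). The only cosmetic difference is that you verify \eqref{cond s1} and \eqref{cond s1 c} directly rather than their equivalent split forms \eqref{cond s1 in} and \eqref{cond s1 ge}, which the paper uses.
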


\begin{prf}
Let $\ass_1 : \vars_{v_1} \to \dom$ of shape $(\Phi_1,\Psi_1)$ and $\ass_2 : \vars_{v_2} \to \dom$ of shape $(\Phi_2,\Psi_2)$.
To prove that $\ass$ has shape $(\Phi,\Psi)$, we need to show that $\ass$ satisfies conditions~\eqref{cond s1 in}, \eqref{cond s1 ge}, \eqref{cond s2 in}, \eqref{cond s2 ge}.

\textbf{Condition~\eqref{cond s1 in}.} 
Let $\con \in \overline{\cons_v} \cap \consin$, and note that $j \in \overline{\cons_{v_1}} \cap \overline{\cons_{v_2}}$.
We have
\begin{align*}
\Phi^\con
& =
\Phi_1^\con + \Phi_2^\con
&& \text{(from \eqref{cond l1 in})} \\
& =
\cb^\con\pare{\ass_1} + \cb^\con\pare{\ass_2}
&& \text{(from \eqref{cond s1 in} for $\ass_1$ and $\ass_2$)} \\
& =
\cb^\con\pare{\ass}
&& \text{(from \cref{obs vb} \ref{obs vb 3})}.
\end{align*}
Therefore, $\ass$ satisfies condition~\eqref{cond s1 in}.

\textbf{Condition~\eqref{cond s1 ge}.}
Let $\con \in \overline{\cons_v} \cap \consge$, and note that $j \in \overline{\cons_{v_1}} \cap \overline{\cons_{v_2}}$.
We have
\begin{align*}
\Phi^\con
& =
\min\bra{\Phi_1^\con + \Phi_2^\con, \gamma^\con}
&& \text{(from \eqref{cond l1 ge})} \\
& =
\min\bra{\min\bra{\cb^\con\pare{\ass_1},\gamma^\con} + \min\bra{\cb^\con\pare{\ass_2},\gamma^\con}, \gamma^\con}
&& \text{(from \eqref{cond s1 ge} for $\ass_1$ and $\ass_2$)} \\
& =
\min\bra{\cb^\con\pare{\ass},\gamma^\con}
&& \text{(from \cref{lem phi ass})}.
\end{align*}
Therefore, $\ass$ satisfies condition~\eqref{cond s1 ge}.

\textbf{Condition~\eqref{cond s2 in}.} 
Let $\con \in \cons_v \cap \consin$.
Then $\con \in \cons_{v_1} \cap \overline{\cons_{v_2}}$ or $\con \in \cons_{v_2} \cap \overline{\cons_{v_1}}$.
We assume, without loss of generality, that $\con \in \cons_{v_1} \cap \overline{\cons_{v_2}}$, since the other case is symmetric.
We get
\begin{align*}
\Psi^\con + \cb^\con\pare{\ass} & =
\Psi^\con + \cb^\con\pare{\ass_1} + \cb^\con\pare{\ass_2}
&& \text{(from \cref{obs vb} \ref{obs vb 3})} \\
& = \Psi^\con + \cb^\con\pare{\ass_1} + \Phi_2^\con
&& \text{(from \eqref{cond s1 in} for $\ass_2$)} \\
& = \Psi_1^\con + \cb^\con\pare{\ass_1}
&& \text{(from \eqref{cond l2 in})} \\
& \in \Gamma^\con
&& \text{(from \eqref{cond s2 in} for $\ass_1$).}
\end{align*}
Therefore, $\ass$ satisfies condition~\eqref{cond s2 in}.

\textbf{Condition~\eqref{cond s2 ge}.} 
Let $\con \in \cons_v \cap \consge$.
Then $\con \in \cons_{v_1} \cap \overline{\cons_{v_2}}$ or $\con \in \cons_{v_2} \cap \overline{\cons_{v_1}}$.
We assume, without loss of generality, that $\con \in \cons_{v_1} \cap \overline{\cons_{v_2}}$, since the other case is symmetric.
We get 
\begin{align*}
\Psi^\con + \cb^\con\pare{\ass} & = \Psi^\con + \cb^\con\pare{\ass_1} + \cb^\con\pare{\ass_2} && \text{(from \cref{obs vb} \ref{obs vb 3})} \\
& \ge \Psi^\con + \cb^\con\pare{\ass_1} + \min\bra{\cb^\con\pare{\ass_2},\gamma^\con} \\
& = \Psi^\con + \cb^\con\pare{\ass_1} + \Phi_2^\con && \text{(from \eqref{cond s1 ge} for $\ass_2$)} \\
& \ge \min\bra{\Psi^\con + \Phi_2^\con, \gamma^\con} + \cb^\con\pare{\ass_1} \\
& = \Psi_1^\con + \cb^\con\pare{\ass_1} && \text{(from \eqref{cond l2 ge})} \\
& \ge \gamma^\con && \text{(from \eqref{cond s2 ge} for $\ass_1$)}.
\end{align*}
Therefore, $\ass$ satisfies condition~\eqref{cond s2 ge}.
\end{prf}

\subsection{From parent to children}

Next, we study this connection among ``assignments of shape'' when traversing the tree in a top down manner.

\begin{lemma}
\label{lem down}
Consider a separable system $\pare{\vars,\dom,\consin,\consge}$, let $\cons := \consin \cup \consge$, let $T$ be a branch decomposition of $\vars \cup \cons$, and let $v$ be an inner vertex of $T$ with children $v_1$ and $v_2$. 
Let $(\Phi,\Psi)$ be a shape for $v$, and let $\ass : \vars_v \to \dom$ of shape $(\Phi,\Psi)$. 
Let $\ass_1$ and $\ass_2$ denote the restrictions of $\ass$ to $\vars_{v_1}$ and $\vars_{v_2}$, respectively. 
There is a unique pair of shapes $(\Phi_1,\Psi_1)$ for $v_1$ and $(\Phi_2,\Psi_2)$ for $v_2$ such that $(\Phi,\Psi)$, $(\Phi_1,\Psi_1)$, $(\Phi_2,\Psi_2)$ are linked shapes, $\ass_1$ has shape $(\Phi_1,\Psi_1)$, and $\ass_2$ has shape $(\Phi_2,\Psi_2)$.
\end{lemma}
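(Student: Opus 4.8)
The plan is to argue constructively, producing the required child shapes from $\ass$ itself. I would set $\Phi_1 := \overline{\cons_{v_1}} / \ass_1 \in \proj(v_1)$ and $\Phi_2 := \overline{\cons_{v_2}} / \ass_2 \in \proj(v_2)$, where $\ass_1,\ass_2$ are the restrictions of $\ass$ to $\vars_{v_1},\vars_{v_2}$, and pair these with the given $\Psi \in \proj(\overline v)$. The first task is to check that $\Psi,\Phi_1,\Phi_2$ satisfy the star conditions \eqref{cond l1 star}, \eqref{cond l2 star}, \eqref{cond l3 star}, so that linked shapes originating from them are well defined. Using $\cb^\con\pare{\ass} = \cb^\con\pare{\ass_1} + \cb^\con\pare{\ass_2}$ (\cref{obs vb} \ref{obs vb 3}) and $\Phi_i^\con \le \cb^\con\pare{\ass_i}$, condition \eqref{cond l1 star} for $\con \in \overline{\cons_v} \cap \consin$ follows from \eqref{cond s1 c} for $\ass$; condition \eqref{cond l2 star} for $\con \in \cons_{v_1} \cap \consin$ (where then $\con \in \cons_v \cap \overline{\cons_{v_2}}$) follows because \eqref{cond s2 in} for $\ass$ gives $\Psi^\con + \cb^\con\pare{\ass} \in \Gamma^\con$, whence $\Psi^\con + \Phi_2^\con \le \Psi^\con + \cb^\con\pare{\ass} \le \gamma^\con$; and \eqref{cond l3 star} is symmetric. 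I would then note that, by \eqref{cond s1} for $\ass$ followed by \cref{lem phi ass}, $\Phi^\con = \min\bra{\cb^\con\pare{\ass},\gamma^\con} = \min\bra{\Phi_1^\con + \Phi_2^\con,\gamma^\con}$, so the given $\Phi$ is precisely the map produced by \eqref{cond l1}; defining $\Psi_1,\Psi_2$ by \eqref{cond l2}, \eqref{cond l3}, the triple $(\Phi,\Psi),(\Phi_1,\Psi_1),(\Phi_2,\Psi_2)$ is a valid triple of linked shapes for $v,v_1,v_2$.

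Next I would verify that $\ass_1$ has shape $(\Phi_1,\Psi_1)$ by checking the four defining conditions \eqref{cond s1}, \eqref{cond s1 c}, \eqref{cond s2 in}, \eqref{cond s2 ge} for $v_1$, keeping in mind that $\overline{\cons_{v_1}} = \overline{\cons_v} \cup \cons_{v_2}$. Condition \eqref{cond s1} is immediate from $\Phi_1 = \overline{\cons_{v_1}} / \ass_1$. For \eqref{cond s1 c} on $\overline{\cons_{v_1}} \cap \consin$ I would split into the case $\con \in \overline{\cons_v}$, handled by \eqref{cond s1 c} for $\ass$ together with $\cb^\con\pare{\ass_1} \le \cb^\con\pare{\ass}$, and the case $\con \in \cons_{v_2}$, handled by $\cb^\con\pare{\ass_1} \le \Psi^\con + \cb^\con\pare{\ass}$ and \eqref{cond s2 in} for $\ass$. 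For \eqref{cond s2 in} on $\cons_{v_1} \cap \consin$: from $\Psi^\con + \cb^\con\pare{\ass_1} + \cb^\con\pare{\ass_2} \in \Gamma^\con$ one gets $\cb^\con\pare{\ass_2} \le \gamma^\con$, hence $\Phi_2^\con = \cb^\con\pare{\ass_2}$, and then \eqref{cond l2 in} gives $\Psi_1^\con + \cb^\con\pare{\ass_1} = \Psi^\con + \Phi_2^\con + \cb^\con\pare{\ass_1} = \Psi^\con + \cb^\con\pare{\ass} \in \Gamma^\con$. For \eqref{cond s2 ge} on $\cons_{v_1} \cap \consge$: by \eqref{cond l2 ge}, then \cref{lem 3min} with $c=0$, then \eqref{cond s2 ge} for $\ass$, $\Psi_1^\con + \cb^\con\pare{\ass_1} = \min\bra{\Psi^\con + \Phi_2^\con,\gamma^\con} + \cb^\con\pare{\ass_1} \ge \min\bra{\Psi^\con + \Phi_2^\con + \cb^\con\pare{\ass_1},\gamma^\con} = \min\bra{\Psi^\con + \cb^\con\pare{\ass},\gamma^\con} = \gamma^\con$. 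The claim that $\ass_2$ has shape $(\Phi_2,\Psi_2)$ is the mirror image, obtained by exchanging the indices $1$ and $2$ and using \eqref{cond l3 in}, \eqref{cond l3 ge} in place of \eqref{cond l2 in}, \eqref{cond l2 ge}.

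For uniqueness, suppose $(\Phi_1',\Psi_1')$ for $v_1$ and $(\Phi_2',\Psi_2')$ for $v_2$ also satisfy the conclusion. Since $\ass_1$ has shape $(\Phi_1',\Psi_1')$, condition \eqref{cond s1} forces $\Phi_1' = \overline{\cons_{v_1}} / \ass_1 = \Phi_1$, and likewise $\Phi_2' = \Phi_2$; and since the triples are linked shapes, $\Psi_1'$ is then determined by \eqref{cond l2} from $\Psi$ and $\Phi_2$, so $\Psi_1' = \Psi_1$, and symmetrically $\Psi_2' = \Psi_2$.

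I expect the main obstacle to be the bookkeeping in the two middle steps: tracking which of $\cons_{v_1},\cons_{v_2},\overline{\cons_v}$ and of $\consin,\consge$ each constraint belongs to, and threading the truncation at $\gamma^\con$ correctly through the nested minima. The key maneuver is to observe that $\Phi_2^\con$ may be replaced by the un-truncated $\cb^\con\pare{\ass_2}$ exactly when the stronger fact already available for $\ass$ — namely $\Psi^\con + \cb^\con\pare{\ass} \in \Gamma^\con$, respectively $\Psi^\con + \cb^\con\pare{\ass} \ge \gamma^\con$ — forces $\cb^\con\pare{\ass_2} \le \gamma^\con$, and to use \cref{lem 3min} (with $c=0$) to collapse $\min\bra{\Psi^\con + \min\bra{\cb^\con\pare{\ass_2},\gamma^\con} + \cb^\con\pare{\ass_1},\gamma^\con}$ down to $\min\bra{\Psi^\con + \cb^\con\pare{\ass},\gamma^\con}$.
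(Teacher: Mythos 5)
Your proposal is correct and follows essentially the same route as the paper: force $\Phi_1 = \overline{\cons_{v_1}}/\ass_1$ and $\Phi_2 = \overline{\cons_{v_2}}/\ass_2$ via \eqref{cond s1}, take the linked triple originated from $\Psi,\Phi_1,\Phi_2$, verify \eqref{cond s1 c}, \eqref{cond s2 in}, \eqref{cond s2 ge} for $\ass_1$ (with $\ass_2$ symmetric), and get uniqueness from the fact that the linked triple is determined by $\Psi,\Phi_1,\Phi_2$. The one place you go beyond the paper's write-up is in explicitly checking the star conditions \eqref{cond l1 star}--\eqref{cond l3 star} so that the linked triple is well defined; that check is needed and your derivation of it from \eqref{cond s1 c} and \eqref{cond s2 in} for $\ass$ is right.
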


\begin{prf}
Since $\ass_1 : \vars_{v_1} \to \dom$ must have shape $(\Phi_1,\Psi_1)$, and $\ass_2 : \vars_{v_2} \to \dom$ must have shape $(\Phi_2,\Psi_2)$, according to \eqref{cond s1} we need to set $\Phi_1 := \overline{\cons_{v_1}} / \ass_1 \in \proj(v_1)$ and $\Phi_2 := \overline{\cons_{v_2}} / \ass_2 \in \proj(v_2)$.
Then, \cref{lem phi ass} implies that \eqref{cond l1} holds.
Therefore, the only triple of linked shapes that we can consider is the one originated from $\Psi$, $\Phi_1$, $\Phi_2$, which we denote by $(\Phi,\Psi)$, $(\Phi_1,\Psi_1)$, $(\Phi_2,\Psi_2)$.
Note that, according to \cref{lem phi ass} and \eqref{cond l1}, the linked shape $(\Phi,\Psi)$ that we  obtained is indeed the shape for $v$ we started from.

To complete the proof, we only need to show that $\ass_1$ has shape $(\Phi_1,\Psi_1)$ and $\ass_2$ has shape $(\Phi_2,\Psi_2)$.
We only show it for $\ass_1$, as the other one is symmetric.
We already know that \eqref{cond s1} holds for $\ass_1$, thus to prove that $\ass_1$ has shape $(\Phi_1,\Psi_1)$, it suffices to show that $\ass_1$ satisfies~\eqref{cond s1 c}, \eqref{cond s2 in}, \eqref{cond s2 ge}.

\textbf{Condition~\eqref{cond s1 c}.}
Let $j \in \overline{\cons_{v_1}} \cap \consin$.
We consider separately two cases.
If $\con \in \overline{\cons_v}$, we have
\begin{align*}
\cb^\con\pare{\ass_1}
& = 
\cb^\con\pare{\ass} - \cb^\con\pare{\ass_2}
&& \text{(from \cref{obs vb} \ref{obs vb 3})} \\
& \le
\cb^\con\pare{\ass}
&& \text{(from \cref{obs vb} \ref{obs vb 1})} \\
& \le
\gamma^\con
&& \text{(from \eqref{cond s1 c} for $\ass$)}.
\end{align*}
If $\con \in \cons_v$, we have
\begin{align*}
\cb^\con\pare{\ass_1}
& = 
\cb^\con\pare{\ass} - \cb^\con\pare{\ass_2}
&& \text{(from \cref{obs vb} \ref{obs vb 3})} \\
& \le
\cb^\con\pare{\ass}
&& \text{(from \cref{obs vb} \ref{obs vb 1})} \\
& \in
\Gamma^\con - \Psi^\con
&& \text{(from \eqref{cond s2 in} for $\ass$)} \\
& \le
\gamma^\con.
\end{align*}
Therefore, $\ass$ satisfies condition~\eqref{cond s1 c}.

\textbf{Condition~\eqref{cond s2 in}.} 
Let $\con \in \cons_{v_1} \cap \consin$.
Then $\con \in \cons_{v} \cap \overline{\cons_{v_2}}$.
We have
\begin{align*}
\Psi_1^\con + \cb^\con\pare{\ass_1}
& = \min\bra{\Psi^\con + \Phi_2^\con, \gamma^\con} + \cb^\con\pare{\ass_1}
&& \text{(from \eqref{cond l2})} \\
& = \min\bra{\Psi^\con + \cb^\con\pare{\ass_2}, \gamma^\con} + \cb^\con\pare{\ass_1}
&& \text{(from \eqref{cond s1 in} for $\ass_2$)} \\
& = \min\bra{\Psi^\con + \cb^\con\pare{\ass_1} + \cb^\con\pare{\ass_2}, \gamma^\con + \cb^\con\pare{\ass_1}} \\
& = \min\bra{\Psi^\con + \cb^\con\pare{\ass}, \gamma^\con + \cb^\con\pare{\ass_1}}
&& \text{(from \cref{obs vb} \ref{obs vb 3})} \\
& = \Psi^\con + \cb^\con\pare{\ass}
&& \text{(from \eqref{cond s2 in} for $\ass$)} \\
& \in \Gamma^\con
&& \text{(from \eqref{cond s2 in} for $\ass$).} \\
\end{align*}
Therefore, $\ass$ satisfies condition~\eqref{cond s2 in}.

\textbf{Condition~\eqref{cond s2 ge}.} 
Let $\con \in \cons_{v_1} \cap \consge$.
Then $\con \in \cons_{v} \cap \overline{\cons_{v_2}}$.
We have
\begin{align*}
\Psi_1^\con + \cb^\con\pare{\ass_1}
& = \min\bra{\Psi^\con + \Phi_2^\con, \gamma^\con} + \cb^\con\pare{\ass_1}
&& \text{(from \eqref{cond l2})} \\
& = \min\bra{\Psi^\con + \min\bra{\cb^\con\pare{\ass_2},\gamma^\con}, \gamma^\con} + \cb^\con\pare{\ass_1}
&& \text{(from \eqref{cond s1 ge} for $\ass_2$)} \\
& = \min\bra{\Psi^\con + \cb^\con\pare{\ass_2}, \gamma^\con} + \cb^\con\pare{\ass_1}
&& \text{(from \cref{lem 3min})} \\
& = \min\bra{\Psi^\con + \cb^\con\pare{\ass_1} + \cb^\con\pare{\ass_2}, \cb^\con\pare{\ass_1} + \gamma^\con} \\
& = \min\bra{\Psi^\con + \cb^\con\pare{\ass}, \cb^\con\pare{\ass_1} + \gamma^\con}
&& \text{(from \cref{obs vb} \ref{obs vb 3})} \\
& \ge \gamma^\con
&& \text{(from \eqref{cond s2 ge} for $\ass$)}.
\end{align*}
Therefore, $\ass$ satisfies condition~\eqref{cond s2 ge}.
\end{prf}

\subsection{Structure of shapes}

The next key result is a direct consequence of \cref{lem up,lem down}, and will play a major role in our main algorithms.
To state it, we define 
\begin{align*}
\Ass_v(\Phi,\Psi) 
:= \bra{\ass : \vars_v \to \dom \mid \ass \text{ has shape } (\Phi,\Psi)}.
\end{align*}

\begin{proposition}
\label{prop decomposition}
Consider a separable system $\pare{\vars,\dom,\consin,\consge}$, let $\cons := \consin \cup \consge$, let $T$ be a branch decomposition of $\vars \cup \cons$, and let $v$ be an inner vertex of $T$ with children $v_1$ and $v_2$. 
Let $(\Phi,\Psi)$ be a shape for $v$, and let $\pairs$ denote the set of pairs of shapes $(\Phi_1,\Psi_1)$ for $v_1$ and $(\Phi_2,\Psi_2)$ for $v_2$ such that $(\Phi,\Psi)$, $(\Phi_1,\Psi_1)$, $(\Phi_2,\Psi_2)$ are linked shapes. 
Then $\Ass_v(\Phi,\Psi)$ is the union of disjoint sets
\begin{align*}
\bra{\ass_1 \cup \ass_2 \mid \ass_1 \in \Ass_{v_1}(\Phi_1,\Psi_1), \ass_2 \in \Ass_{v_2}(\Phi_2,\Psi_2)}, \qquad \forall \pare{(\Phi_1,\Psi_1), (\Phi_2,\Psi_2)} \in \pairs.
\end{align*}
\end{proposition}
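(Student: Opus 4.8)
The plan is to prove the two inclusions separately, obtaining ``$\supseteq$'' from \cref{lem up} and ``$\subseteq$'' from \cref{lem down}, and then to read off the disjointness of the union from the uniqueness clause of \cref{lem down}. Throughout I will use that, since $T$ is a branch decomposition and $v_1,v_2$ are the children of $v$, the set $\vars_v$ is the disjoint union of $\vars_{v_1}$ and $\vars_{v_2}$; consequently every $\ass:\vars_v\to\dom$ decomposes \emph{uniquely} as $\ass=\ass_1\cup\ass_2$ with $\ass_1:\vars_{v_1}\to\dom$ and $\ass_2:\vars_{v_2}\to\dom$ (namely the restrictions of $\ass$).

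For ``$\supseteq$'', fix a pair $\pare{(\Phi_1,\Psi_1),(\Phi_2,\Psi_2)}\in\pairs$ and arbitrary $\ass_1\in\Ass_{v_1}(\Phi_1,\Psi_1)$, $\ass_2\in\Ass_{v_2}(\Phi_2,\Psi_2)$. By definition of $\pairs$, the shapes $(\Phi,\Psi)$, $(\Phi_1,\Psi_1)$, $(\Phi_2,\Psi_2)$ are linked for $v,v_1,v_2$; since $\ass_1$ has shape $(\Phi_1,\Psi_1)$ and $\ass_2$ has shape $(\Phi_2,\Psi_2)$, \cref{lem up} gives that $\ass_1\cup\ass_2$ has shape $(\Phi,\Psi)$, i.e.\ $\ass_1\cup\ass_2\in\Ass_v(\Phi,\Psi)$. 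Hence each of the indexed sets is contained in $\Ass_v(\Phi,\Psi)$.

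For ``$\subseteq$'' together with disjointness, let $\ass\in\Ass_v(\Phi,\Psi)$ and let $\ass_1,\ass_2$ be its restrictions to $\vars_{v_1},\vars_{v_2}$, so $\ass=\ass_1\cup\ass_2$. By \cref{lem down} there is a unique pair of shapes $(\Phi_1,\Psi_1)$ for $v_1$ and $(\Phi_2,\Psi_2)$ for $v_2$ such that these three shapes are linked, $\ass_1\in\Ass_{v_1}(\Phi_1,\Psi_1)$, and $\ass_2\in\Ass_{v_2}(\Phi_2,\Psi_2)$. This pair lies in $\pairs$, and $\ass$ belongs to the set it indexes, proving ``$\subseteq$''. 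If $\ass$ also lies in the set indexed by some $\pare{(\Phi_1',\Psi_1'),(\Phi_2',\Psi_2')}\in\pairs$, then $\ass=\ass_1'\cup\ass_2'$ with $\ass_i'\in\Ass_{v_i}(\Phi_i',\Psi_i')$; by uniqueness of the restriction decomposition, $\ass_1'=\ass_1$ and $\ass_2'=\ass_2$. Thus $\ass_1$ has shape $(\Phi_1',\Psi_1')$, $\ass_2$ has shape $(\Phi_2',\Psi_2')$, and these shapes are linked with $(\Phi,\Psi)$, so the uniqueness clause of \cref{lem down} forces $(\Phi_1',\Psi_1')=(\Phi_1,\Psi_1)$ and $(\Phi_2',\Psi_2')=(\Phi_2,\Psi_2)$. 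Hence $\ass$ lies in exactly one of the indexed sets, which yields both the reverse inclusion and the disjointness.

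The only point requiring care is disjointness: one cannot argue it from the assignments alone, because a single assignment may carry several shapes. The argument must instead pass through the \emph{uniqueness} part of \cref{lem down}, applied relative to the fixed shape $(\Phi,\Psi)$ for $v$, as above; the remaining steps are routine bookkeeping about restrictions of assignments along the branch decomposition.
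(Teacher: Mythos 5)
Your proposal is correct and follows exactly the paper's argument: containment $\supseteq$ from \cref{lem up}, containment $\subseteq$ from \cref{lem down}, and disjointness from the uniqueness clause of \cref{lem down}. The paper states this in three lines; your write-up simply makes the bookkeeping (unique decomposition of $\ass$ into restrictions, and why disjointness must go through uniqueness rather than through the assignments themselves) explicit, which is a faithful elaboration rather than a different route.
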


\begin{prf}
Containment $\supseteq$ follows from \cref{lem up}, while containment $\subseteq$ follows from \cref{lem down}.
The fact that the union is disjoint follows from the uniqueness in \cref{lem down}.
\end{prf}

\section{Algorithms}
\label{sec algorithms}

\subsection{Optimization}
\label{sec opt}

In this section, we consider our first problem defined over a separable system, and we show how shapes can be used to solve it.
In the \emph{optimization problem,} we are given a separable system $\pare{\vars,\dom,\consin,\consge}$ and $\nu_\var : \dom \to \R$ for every $\var \in \vars$.
For every assignment $\ass : \vars \to \dom$, we define its \emph{value}
\begin{align}
\label{eq ass value}
\val(\ass) & := \sum_{\var \in \vars} \nu_\var \pare{\ass(\var)}.
\end{align}
The goal is to find a highest-value assignment from $\vars$ to $\dom$ satisfying the constraints in $\consin \cup \consge$, or prove that no such assignment exists.
In the next result, we show that we can solve efficiently the optimization problem on separable systems with bounded projection-width.

\begin{theorem}[Optimization]
\label{th optimization}
Consider a separable system $\pare{\vars,\dom,\consin,\consge}$, let $\cons := \consin \cup \consge$, and let $T$ be a branch decomposition of $\vars \cup \cons$ of projection-width $\pwidth$.
Let $\nu_\var : \dom \to \R$ for every $\var \in \vars$.
Then, the optimization problem can be solved in time 
\begin{equation}
\label{eq opt runtime}
O\pare{
\pwidth^3 \pare{\card{\vars}+\card{\cons}} \card{\cons}
+
\pwidth \card{\consin} \Lambda
+
\card{\vars} \card{\cons} \card{\dom}\log\pare{\card{\dom}}}.
\end{equation}
\end{theorem}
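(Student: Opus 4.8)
The plan is to run a dynamic program over the branch decomposition $T$, computing for every vertex $v$ and every shape $(\Phi,\Psi)$ for $v$ the quantity
\[
\mathrm{opt}_v(\Phi,\Psi) := \max\bra{\val_v(\ass) \mid \ass \in \Ass_v(\Phi,\Psi)},
\]
where $\val_v(\ass) := \sum_{\var \in \vars_v} \nu_\var(\ass(\var))$, with the convention that the maximum over the empty set is $-\infty$. The final answer is then read off at the root $r$: by \cref{rem root vshape}, $\Ass_r(\epsilon,\cons/\epsilon)$ is exactly the set of assignments $\vars \to \dom$ satisfying all constraints, so the optimal value is $\mathrm{opt}_r(\epsilon,\cons/\epsilon)$, and it is $-\infty$ precisely when no feasible assignment exists. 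An actual optimal assignment can be recovered by standard bookkeeping of the argmax choices made at each step.

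First I would invoke \cref{prop all proj} to construct all sets $\proj(v)$ and $\proj(\overline v)$, which by definition of projection-width have size at most $\pwidth$; this costs the last summand plus a $\pwidth^2\log\pwidth(\card\vars+\card\cons)\card\cons$ term, absorbed by the first summand of \eqref{eq opt runtime}. Then I initialize the DP at the leaves: for a constraint-leaf $l$, $\Ass_l$ is (at most) a single shape realized by the empty assignment with value $0$; for a variable-leaf $l$ corresponding to $\var$, for each $d \in \dom$ the singleton assignment $\var \mapsto d$ has a well-defined shape (checkable via \eqref{cond s1 in}, \eqref{cond s1 ge}, \eqref{cond s2 in}, \eqref{cond s2 ge}), and I keep, per shape, the best value $\max_d \nu_\var(d)$ over the $d$ yielding that shape — here is where the $\pwidth\card{\consin}\Lambda$ term enters, since verifying \eqref{cond s2 in} requires membership tests $\Psi^\con + g^\con_\var(d) \in \Gamma^\con$, and these tests, amortized suitably over all shapes and variable-leaves, contribute that cost (the $\consge$ conditions are plain comparisons). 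For an inner vertex $v$ with children $v_1,v_2$, I process each shape $(\Phi,\Psi)$ for $v$ by enumerating the set $\pairs$ of linked-shape pairs from \cref{prop decomposition}: that proposition gives the disjoint decomposition
\[
\Ass_v(\Phi,\Psi) = \bigcup_{((\Phi_1,\Psi_1),(\Phi_2,\Psi_2)) \in \pairs} \bra{\ass_1 \cup \ass_2 \mid \ass_i \in \Ass_{v_i}(\Phi_i,\Psi_i)},
\]
and since $\val_v(\ass_1 \cup \ass_2) = \val_{v_1}(\ass_1) + \val_{v_2}(\ass_2)$ this yields the recurrence
\[
\mathrm{opt}_v(\Phi,\Psi) = \max_{((\Phi_1,\Psi_1),(\Phi_2,\Psi_2)) \in \pairs} \pare{\mathrm{opt}_{v_1}(\Phi_1,\Psi_1) + \mathrm{opt}_{v_2}(\Phi_2,\Psi_2)}.
\]
Enumerating $\pairs$ amounts to iterating over the $O(\pwidth^3)$ choices of $\Psi \in \proj(\overline v)$, $\Phi_1 \in \proj(v_1)$, $\Phi_2 \in \proj(v_2)$ originating the linked shapes, checking \eqref{cond l1 star}–\eqref{cond l3 star} in $O(\card\cons)$ time each, computing $\Phi,\Psi_1,\Psi_2$ via \eqref{cond l1}–\eqref{cond l3}, and matching $\Phi$ against the shape under consideration; over all $\card\vars+\card\cons-1$ inner vertices this is $O(\pwidth^3(\card\vars+\card\cons)\card\cons)$, the first summand.

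The main subtlety — not so much an obstacle as the point requiring care — is the correctness of the leaf initialization and the bookkeeping of the $\Lambda$-cost: one must check that a singleton or empty assignment has a unique shape among the precomputed $\proj(v) \times \proj(\overline v)$ (uniqueness of $\Phi$ is immediate from \eqref{cond s1}; for $\Psi$ one simply records, for each valid $\Psi$, the best-value witness) and that the membership queries against $\Gamma^\con$ are counted so that each of the $\card{\consin}$ constraints contributes $O(\pwidth)$ such queries at the variable-leaf level rather than $O(\pwidth\card\dom)$ — this is achieved by first sorting/deduplicating as in \cref{prop all proj} so that only distinct $(\Psi^\con)$ values are tested. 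Everything else is a routine induction: soundness of the recurrence is the $\supseteq$ direction of \cref{prop decomposition} (via \cref{lem up}), completeness is the $\subseteq$ direction (via \cref{lem down}), and disjointness (needed only to ensure the $\max$ is over a correct decomposition, though for optimization disjointness is not even essential) comes from the uniqueness clause of \cref{lem down}. Summing the three phases gives exactly the claimed bound \eqref{eq opt runtime}.
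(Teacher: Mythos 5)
Your overall architecture is the same as the paper's: precompute all projections via \cref{prop all proj}, run a bottom-up table over shapes using the linked-shape enumeration and \cref{prop decomposition} at inner vertices, and read off the answer at the root via \cref{rem root vshape}. The inner-vertex recurrence, its cost accounting, and the root step are all fine. But there is a concrete error in where you locate the feasibility filtering and the $\pwidth\card{\consin}\Lambda$ term, and as written that part of the argument does not go through.

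The conditions \eqref{cond s2 in} and \eqref{cond s2 ge} quantify over $\con \in \cons_v \cap \consin$ and $\con \in \cons_v \cap \consge$. For a leaf $l$ corresponding to a \emph{variable}, $\cons_l = \emptyset$, so these conditions are vacuous: every $\Psi \in \proj(\overline{l})$ is compatible with every singleton assignment, and there are no membership tests $\Psi^\con + g^\con_\var(d) \in \Gamma^\con$ to perform there. The tests you describe would have to range over constraints outside $\cons_l$, where $\Psi$ is not even defined, so your amortization of $O(\pwidth)$ queries per constraint ``at the variable-leaf level'' rests on checks that do not exist in the framework. The place where constraints are actually enforced, and where the $\Lambda$ cost genuinely arises, is the \emph{constraint} leaves: for a leaf $l$ corresponding to $\con$, we have $\cons_l = \bra{\con}$ and $\cb^\con(\epsilon) = 0$, so the empty assignment has shape $(\Phi,\Psi)$ if and only if $\Psi^\con \in \Gamma^\con$ (for $\con \in \consin$) or $\Psi^\con \ge \gamma^\con$ (for $\con \in \consge$). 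This must be tested for each of the at most $\pwidth$ maps $\Psi \in \proj(\overline{l})$, setting $\tab_l(\Phi,\Psi) := \NA$ when the test fails; that is $O(\pwidth\Lambda)$ per constraint in $\consin$ and $O(\pwidth)$ per constraint in $\consge$, which is exactly the second summand of \eqref{eq opt runtime}. Your description of the constraint leaves (``a single shape realized by the empty assignment with value $0$'') skips this filtering; without it the algorithm never rejects any $\Psi$ at a constraint leaf and can return infeasible assignments. Once you move the \eqref{cond s2 in}/\eqref{cond s2 ge} checks to the constraint leaves and drop the (vacuous) checks and the amortization argument at the variable leaves, your proof matches the paper's.
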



\begin{prf}
First, we apply \cref{prop all proj} and compute $\proj(v)$ and $\proj(\overline{v})$, for every vertex $v$ of $T$, in time $O\pare{\pwidth^2 \log(\pwidth) \pare{\card{\vars}+\card{\cons}} \card{\cons} + \card{\vars}\card{\cons}\card{\dom}\log\pare{\card{\dom}}}$.

\textbf{Table.}
Next, our algorithm will construct, for each vertex $v$ of $T$, a table $\tab_v$ indexed by the shapes $(\Phi,\Psi)$ for $v$.
For every assignment $\ass : \vars_v \to \dom$, we define its \emph{value}
\begin{align*}
\val_{v}(\ass) := \sum_{\var \in \vars_v} \nu_\var \pare{\ass(\var)}.
\end{align*}
For a shape $(\Phi,\Psi)$, the content of the table $\tab_v$ at this index, which we denote by $\tab_v(\Phi,\Psi)$, should be a pair $(\ass,\val_{v}(\ass))$, where $\ass$ is a highest-value assignment from $\vars_v$ to $\dom$ of shape $(\Phi,\Psi)$.
If no such assignment exists, we should have $\tab_v(\Phi,\Psi) = \NA$.
We now explain how we can compute the table $\tab_v$, for every vertex $v$ of $T$.
This is done in a bottom up manner.

\textbf{Leaves corresponding to constraints.}
Consider a leaf $l$ of $T$ corresponding to a constraint $\con \in \cons$.
From \cref{rem proj} \ref{rem proj constraint-leaf}, there is only one assignment from $\vars_l$ to $\dom$, the empty assignment $\epsilon$, which has value $0$, and that can be constructed in time $O(1)$.
We now find all shapes $(\Phi,\Psi)$ for $l$ such that $\epsilon$ has shape $(\Phi,\Psi)$; We then set $\tab_l(\Phi,\Psi) := (\epsilon,0)$ if $\epsilon$ has shape $(\Phi,\Psi)$, and $\tab_l(\Phi,\Psi) := \NA$ otherwise.
Clearly, there is only one $\Phi \in \proj(l)$, and it satisfies \eqref{cond s1 in} and \eqref{cond s1 ge}.
For every $\Phi \in \proj(\overline{l})$, we need to check \eqref{cond s2 in} or \eqref{cond s2 ge}, depending on whether $c$ is in $\consin$ or $\consge$, and this can be done in time $O\pare{\Lambda}$ and $O(1)$ respectively.
Therefore, we can compute the table $\tab_l$ in time $O\pare{\pwidth \Lambda}$ if $c \in \consin$, and in time $O\pare{\pwidth}$ if $c \in \consge$.
Since there are $\card{\consin}$ leaves corresponding to constraints in $\consin$, and $\card{\consge}$ leaves corresponding to constraints in $\consge$, in total they require $O\pare{\pwidth \card{\consin} \Lambda + \pwidth \card{\consge}}$ time.

\textbf{Leaves corresponding to variables.}
Consider a leaf $l$ of $T$ corresponding to a variable $\var \in \vars$.
From \cref{rem proj} \ref{rem proj variable-leaf}, there are $\card{\dom}$ assignment from $\vars_l$ to $\dom$, and they can be constructed, with their values, in time $O(\card{\dom})$.
We now fix one such assignment $\ass$, and find all shapes $(\Phi,\Psi)$ for $l$ such that $\ass$ has shape $(\Phi,\Psi)$.
There is at most one $\Phi \in \proj(l)$ satisfying \eqref{cond s1 in} and \eqref{cond s1 ge}, and it can be constructed in time $O(\cons)$.
We fix such $\Phi$, and observe that $\ass$ has shape $(\Phi,\Psi)$, for every $\Psi \in \proj(\overline{l})$, which are at most $\pwidth$.
This is because
conditions \eqref{cond s2 in}, \eqref{cond s2 ge} are
always satisfied since $\cons_v = \emptyset$.
Once we have considered all $\card{\dom}$ assignments, for every shape $(\Phi,\Psi)$ for $l$, which are at most $\pwidth^2$, we set $\tab_l(\Phi,\Psi) := (\ass,\val_{v}(\ass))$, where $\ass$ is the highest-value assignment of shape $(\Phi,\Psi)$, or $\tab_l(\Phi,\Psi) := \NA$ if no assignment has shape $(\Phi,\Psi)$. 
Therefore, we can compute the table $\tab_l$ in time $O\pare{\card{\cons} \card{\dom} + \pwidth^2}$.
Since there are $\card{\vars}$ leaves corresponding to variables, in total they require $O\pare{\card{\vars} \card{\cons} \card{\dom} + \pwidth^2 \card{\vars}}$ time.

\textbf{Inner vertices.}
Consider now an inner vertex $v$ of $T$, with children $v_1,v_2$.
We loop over all triples $(\Phi,\Psi)$, $(\Phi_1,\Psi_1)$, $(\Phi_2,\Psi_2)$ of linked shapes for $v,v_1,v_2$. 
To do so, we pick $\Psi \in \proj(\overline{v})$, $\Phi_1 \in \proj(v_1)$, and $\Phi_2 \in \proj(v_2)$, we check \eqref{cond l1 star}, \eqref{cond l2 star}, \eqref{cond l1 star}, and if they are satisfied, we define $\Phi$, $\Psi_1$, and $\Psi_2$ according to \eqref{cond l1}, \eqref{cond l2}, and \eqref{cond l3}, respectively.
Note that we have at most $\pwidth^3$ such triples, and for each the above check and construction requires $O(\card{\cons})$ time.
For each triple of linked shapes with $\tab_{v_1}(\Phi_1,\Psi_1) \neq \NA$ and $\tab_{v_2}(\Phi_2,\Psi_2) \neq \NA$, let $\pare{\ass_1,\val_{v_1}(\ass_1)} := \tab_{v_1}(\Phi_1,\Psi_1)$ and $\pare{\ass_2,\val_{v_2}(\ass_2)} := \tab_{v_2}(\Phi_2,\Psi_2)$.
We then construct the value of the ``candidate assignment for $(\Phi,\Psi)$'' given by $\ass_1 \cup \ass_2$ in time $O(1)$ by summing $\val_{v_1}(\ass_1)$ and $\val_{v_2}(\ass_2)$.

For each shape $(\Phi,\Psi)$ for $v$, we then set the highest-value assignment $\ass_1 \cup \ass_2$, among all candidate assignments for $(\Phi,\Psi)$, as the content of $\tab_v(\Phi,\Psi)$, together with its value.
If there are no candidate assignments for $(\Phi,\Psi)$, we set $\tab_l(\Phi,\Psi) := \NA$.
To improve runtime, here we do not explicitly construct $\ass_1 \cup \ass_2$; instead, we store pointers to $(\Phi_1,\Psi_1)$ and $(\Phi_2,\Psi_2)$ giving the highest-value, so that this is done in time $O(1)$ instead of $O\pare{\card{\vars}}$.
\cref{prop decomposition} implies that we set $\tab_v$ correctly, for every inner vertex $v$ of $T$.
Therefore, for each inner vertex $v$ of $T$, the table $\tab_v$ can be computed (partially implicitly) in time $O(\pwidth^3 \card{\cons})$.
Since there are $\card{\vars}+\card{\cons}-1$ inner vertices of $T$, in total they require $O(\pwidth^3 \pare{\card{\vars}+\card{\cons}} \card{\cons})$ time.

\textbf{Root.}
Once the table $\tab_v$ is computed, for every vertex $v$ of $T$, we consider the root $r$ of $T$, and the shape $\pare{\epsilon, \cons/\epsilon}$ for $r$ (see \cref{rem proj} \ref{rem proj root phi} and \ref{rem proj root psi}).
From \cref{rem root vshape}, 
the assignments $\ass : \vars \to \dom$ that have this shape are precisely those that satisfy the constraints in $\cons$.
The table $\tab_r$, indexed by $(\epsilon, \cons / \epsilon)$, implicitly contains a highest-value assignment from $\vars$ to $\dom$ satisfying the constraints in $C$.
Following our pointers, we can construct it explicitly in time $O\pare{\card{\vars}+\card{\cons}}$.
\end{prf}

\subsection{Counting}
\label{sec counting}

In this section, we consider the \emph{counting problem.} 
In this problem, we are given a separable system $\pare{\vars,\dom,\consin,\consge}$, and our goal is to return the number of assignments $\ass : \vars \to \dom$ satisfying the constraints in $\consin \cup \consge$.
Next, in \cref{th counting}, we show how we can solve efficiently the counting problem on separable systems with bounded projection-width.

\begin{theorem}[Counting]
\label{th counting}
Consider a separable system $\pare{\vars,\dom,\consin,\consge}$, let $\cons := \consin \cup \consge$, and let $T$ be a branch decomposition of $\vars \cup \cons$ of projection-width $\pwidth$.
Then, the counting problem can be solved in time \eqref{eq opt runtime}.
\end{theorem}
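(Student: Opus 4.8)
The plan is to mirror the proof of \cref{th optimization} almost verbatim, replacing the ``highest-value assignment'' stored in each table entry by a \emph{count} of assignments. Concretely, for every vertex $v$ of $T$ we build a table $\tab_v$ indexed by the shapes $(\Phi,\Psi)$ for $v$, where $\tab_v(\Phi,\Psi)$ now stores the integer $\card{\Ass_v(\Phi,\Psi)}$, i.e.\ the number of assignments $\ass : \vars_v \to \dom$ of shape $(\Phi,\Psi)$. The final answer is read off from $\tab_r(\epsilon,\cons/\epsilon)$ at the root: by \cref{rem root vshape} this entry counts exactly the assignments $\ass : \vars \to \dom$ satisfying all constraints in $\cons$.

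First I would compute $\proj(v)$ and $\proj(\overline v)$ for every vertex $v$ using \cref{prop all proj}, in time $O\pare{\pwidth^2 \log(\pwidth)\pare{\card{\vars}+\card{\cons}}\card{\cons} + \card{\vars}\card{\cons}\card{\dom}\log\pare{\card{\dom}}}$. Then I would populate the tables bottom-up, exactly as in the optimization proof. At a constraint-leaf $l$ the only assignment is $\epsilon$, so $\tab_l(\Phi,\Psi)$ is $1$ if $\epsilon$ has shape $(\Phi,\Psi)$ and $0$ otherwise; checking \eqref{cond s2 in} or \eqref{cond s2 ge} costs $O(\Lambda)$ or $O(1)$ per $\Psi$, giving $O\pare{\pwidth\card{\consin}\Lambda + \pwidth\card{\consge}}$ overall. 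At a variable-leaf $l$ there are $\card{\dom}$ assignments; for each we find its unique compatible $\Phi \in \proj(l)$ in $O(\card{\cons})$ time and note it has shape $(\Phi,\Psi)$ for every $\Psi \in \proj(\overline l)$ (since $\cons_v = \emptyset$), then we increment the corresponding counters, costing $O\pare{\card{\cons}\card{\dom} + \pwidth^2}$ per leaf. At an inner vertex $v$ with children $v_1,v_2$ we loop over all at most $\pwidth^3$ triples of linked shapes originating from $\Psi \in \proj(\overline v)$, $\Phi_1 \in \proj(v_1)$, $\Phi_2 \in \proj(v_2)$, checking \eqref{cond l1 star}, \eqref{cond l2 star}, \eqref{cond l3 star} and forming $\Phi,\Psi_1,\Psi_2$ via \eqref{cond l1}, \eqref{cond l2}, \eqref{cond l3} in $O(\card{\cons})$ time; for each valid triple we add the \emph{product} $\tab_{v_1}(\Phi_1,\Psi_1)\cdot\tab_{v_2}(\Phi_2,\Psi_2)$ to $\tab_v(\Phi,\Psi)$ in $O(1)$ time. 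This yields $O\pare{\pwidth^3\card{\cons}}$ per inner vertex, hence $O\pare{\pwidth^3\pare{\card{\vars}+\card{\cons}}\card{\cons}}$ in total, and summing all contributions gives the running time \eqref{eq opt runtime}.

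Correctness of the recursion is an immediate consequence of \cref{prop decomposition}: it states that $\Ass_v(\Phi,\Psi)$ is the union, over all linked-shape pairs $\pare{(\Phi_1,\Psi_1),(\Phi_2,\Psi_2)} \in \pairs$, of the sets $\bra{\ass_1 \cup \ass_2 \mid \ass_1 \in \Ass_{v_1}(\Phi_1,\Psi_1),\, \ass_2 \in \Ass_{v_2}(\Phi_2,\Psi_2)}$, and that this union is \emph{disjoint}. Taking cardinalities, disjointness lets us sum over $\pairs$, and for each summand the count of pairs $(\ass_1,\ass_2)$ is $\card{\Ass_{v_1}(\Phi_1,\Psi_1)}\cdot\card{\Ass_{v_2}(\Phi_2,\Psi_2)}$ (two distinct pairs $(\ass_1,\ass_2) \neq (\ass_1',\ass_2')$ give distinct unions $\ass_1\cup\ass_2 \neq \ass_1'\cup\ass_2'$ since $\vars_{v_1},\vars_{v_2}$ are disjoint and cover $\vars_v$). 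Hence $\tab_v(\Phi,\Psi) = \sum_{\pairs} \tab_{v_1}(\Phi_1,\Psi_1)\,\tab_{v_2}(\Phi_2,\Psi_2)$ is exactly $\card{\Ass_v(\Phi,\Psi)}$, and a straightforward induction on the height of $v$ closes the argument; the base cases are handled by \cref{rem proj}. Since $\card{\Ass_v(\Phi,\Psi)} \le \card{\dom}^{\card{\vars}}$, all counters stay polynomial-size in the input, so every arithmetic operation is $O(1)$ in the arithmetic model.

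I do not expect a genuine obstacle here: the only point requiring a moment's care is the passage from the disjoint-union statement of \cref{prop decomposition} to a clean product formula for the counts, which hinges on the observation that the map $(\ass_1,\ass_2)\mapsto \ass_1\cup\ass_2$ is injective because $\vars_{v_1}$ and $\vars_{v_2}$ partition $\vars_v$. Everything else — the table sizes, the per-vertex costs, and the bookkeeping at the root — transfers directly from the proof of \cref{th optimization}, with ``take the maximum of candidate values'' replaced by ``add the product of candidate counts'' and ``store a pointer to the best child shapes'' simply dropped, since no witness assignment needs to be reconstructed.
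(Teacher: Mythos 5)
Your proposal is correct and follows essentially the same route as the paper: same tables indexed by shapes, same leaf initializations, same per-triple update $\tab_v(\Phi,\Psi) \mathrel{+}= \tab_{v_1}(\Phi_1,\Psi_1)\cdot\tab_{v_2}(\Phi_2,\Psi_2)$ over linked shapes, with correctness from \cref{prop decomposition} and the root read-off from \cref{rem root vshape}. The only difference is that you spell out the injectivity of $(\ass_1,\ass_2)\mapsto\ass_1\cup\ass_2$ justifying the product formula, which the paper leaves implicit.
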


\begin{prf}
The proof follows the same structure as that of \cref{th optimization}, and we only highlight the differences here.

\textbf{Table.}
For a shape $(\Phi,\Psi)$, the content of the table $\tab_v$ at this index
should be the number of assignments $\ass : \vars_v \to \dom$ of shape $(\Phi,\Psi)$.

\textbf{Leaves corresponding to constraints.}
Here, we set $\tab_l(\Phi,\Psi) := 1$ if $\epsilon$ has shape $(\Phi,\Psi)$, and $\tab_l(\Phi,\Psi) := 0$ otherwise.

\textbf{Leaves corresponding to variables.}
Here, we set $\tab_l(\Phi,\Psi)$ to be the number of assignments that have shape $(\Phi,\Psi)$, among the $\card{\dom}$ that we constructed.

\textbf{Inner vertices.}
Here, we first initialize $\tab_v(\Phi,\Psi) := 0$ for every shape $(\Phi,\Psi)$ for $v$.
Then, for each triple $(\Phi,\Psi)$, $(\Phi_1,\Psi_1)$, $(\Phi_2,\Psi_2)$ of linked shapes for $v,v_1,v_2$, we let $\val_1 := \tab_{v_1}(\Phi_1,\Psi_1)$ and $\val_2 := \tab_{v_2}(\Phi_2,\Psi_2)$, and add $\val_1 \val_2$ to $\tab_v(\Phi,\Psi)$ in time $O(1)$.

\textbf{Root.}
From \cref{rem root vshape}, 
the content of $\tab_r$, indexed by $(\epsilon, \cons / \epsilon)$, is the number of assignments $\ass : \vars \to \dom$ that satisfy the constraints in $\cons$.
\end{prf}

\subsection{Top-$k$}
\label{sec top-k}

In this section, we consider the \emph{top-$k$ problem.} 
In this problem, we are given a separable system $\pare{\vars,\dom,\consin,\consge}$ and $\nu_\var : \dom \to \R$ for every $\var \in \vars$.
The value of an assignment is defined by \eqref{eq ass value}, like for the optimization problem.
The goal is to return a sorted list of $k$ highest-value assignments from $\vars$ to $\dom$ that satisfy the constraints in $\cons$.
More formally, the output should be a list of $k$ assignments $\ass_1,\ass_2,\dots,\ass_k$ from $\vars$ to $\dom$ satisfying the constraints in $\cons$ and such that
\begin{equation*}
\begin{aligned}
& \val(\ass_k) \le \val(\ass_{k-1}) \le \cdots \le \val\pare{\ass_1}, \\
& \val(\assb) \le \val(\ass_k) \text{ for every other $\assb : \vars \to \dom$ satisfying the constraints in $\cons$},
\end{aligned}
\end{equation*}
with the understanding that, in case there are only $h < k$ assignments from $\vars$ to $\dom$ satisfying the constraints in $\cons$, the output should be a sorted list of only those $h$ assignments.
Next, in \cref{th top-k}, we show how we can solve efficiently the top-$k$ problem on separable systems with bounded projection-width.

\begin{theorem}[Top-$k$]
\label{th top-k}
Consider a separable system $\pare{\vars,\dom,\consin,\consge}$, let $\cons := \consin \cup \consge$, and let $T$ be a branch decomposition of $\vars \cup \cons$ of projection-width $\pwidth$.
Let $\nu_\var : \dom \to \R$ for every $\var \in \vars$.
Then, the top-$k$ problem can be solved in time 
\begin{equation*}
O\pare{
\pwidth^3 \pare{\card{\vars}+\card{\cons}} \pare{\card{\cons} + k \log(k)}
+
\pwidth \card{\consin} \Lambda
+
\card{\vars} \card{\cons} \card{\dom}\log\pare{\card{\dom}}}.
\end{equation*}
\end{theorem}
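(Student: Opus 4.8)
The plan is to follow the template of the proof of \cref{th optimization}, replacing the single highest-value assignment stored per shape by a sorted list of the $k$ highest-value ones. Concretely, after computing all projections via \cref{prop all proj}, I would build, for every vertex $v$ of $T$ and every shape $(\Phi,\Psi)$ for $v$, a table entry $\tab_v(\Phi,\Psi)$ holding the (at most $k$) highest-value assignments in $\Ass_v(\Phi,\Psi)$, each recorded with its value $\val_v(\cdot)$ and — to keep the running time down — not explicitly, but via pointers into the children's tables. The correctness of the recursion is exactly \cref{prop decomposition}: since $\Ass_v(\Phi,\Psi)$ is the \emph{disjoint} union, over all pairs of linked shapes $((\Phi_1,\Psi_1),(\Phi_2,\Psi_2))$, of the sets $\{\ass_1\cup\ass_2 : \ass_1\in\Ass_{v_1}(\Phi_1,\Psi_1),\ \ass_2\in\Ass_{v_2}(\Phi_2,\Psi_2)\}$, and since $\val_v(\ass_1\cup\ass_2)=\val_{v_1}(\ass_1)+\val_{v_2}(\ass_2)$, the top $k$ of $\Ass_v(\Phi,\Psi)$ is obtained by merging, over those linked pairs, the top-$k$ lists of pairwise sums of the two children's sorted lists.

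The base cases are immediate variants of those in \cref{th optimization}: a constraint-leaf table has lists of length at most one, computed in time $O(\pwidth\Lambda)$ for a leaf in $\consin$ and $O(\pwidth)$ for a leaf in $\consge$; a variable-leaf table is obtained by sorting the $\card{\dom}$ assignments by value (in time $O(\card{\cons}\card{\dom}\log\card{\dom})$), and, for the unique $\Phi\in\proj(l)$ realized by each assignment, keeping the $\min\{k,\card{\dom}\}$ best (the conditions~\eqref{cond s2 in} and \eqref{cond s2 ge} being vacuous since $\cons_l=\emptyset$, so $\Ass_l(\Phi,\Psi)$ does not depend on $\Psi$). The heart of the argument — and the main obstacle — is the inner-vertex step. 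Here I would loop over the at most $\pwidth^3$ triples of linked shapes exactly as in \cref{th optimization}, spending $O(\card{\cons})$ per triple on the bookkeeping of $\Phi,\Psi_1,\Psi_2$; for each triple with both children tables nonempty at the relevant shapes, I need the $k$ largest values of the form $\val_{v_1}(\ass_1)+\val_{v_2}(\ass_2)$, that is, the top-$k$ sums of two descending sorted arrays of length at most $k$. This is a standard heap-based computation running in $O(k\log k)$ time: seed the heap with the pair of first entries, then repeatedly extract the maximum and insert the (at most two) successor index pairs, with the usual device to avoid re-insertions — crucially the enumeration is over \emph{index pairs}, so equal-valued assignments are all accounted for. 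The resulting length-$\le k$ sorted list is then merged, in $O(k)$ time, into a running top-$k$ list maintained for the parent shape $(\Phi,\Psi)$, each merged entry storing pointers to the originating child shapes and the ranks within their lists. Summed over the $\pwidth^3$ triples of $v$, this gives $O(\pwidth^3(\card{\cons}+k\log k))$ per inner vertex, hence $O(\pwidth^3(\card{\vars}+\card{\cons})(\card{\cons}+k\log k))$ in total.

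Finally, at the root $r$ the entry $\tab_r(\epsilon,\cons/\epsilon)$ contains, by \cref{rem root vshape}, precisely the (at most) $k$ highest-value assignments from $\vars$ to $\dom$ satisfying all constraints in $\cons$, already sorted — and only the $h<k$ of them if fewer exist. Each of these at most $k$ assignments is reconstructed explicitly by following the stored pointers down $T$ in time $O(\card{\vars}+\card{\cons})$, for a total of $O(k(\card{\vars}+\card{\cons}))$. Adding the cost of \cref{prop all proj}, of the constraint-leaves ($O(\pwidth\card{\consin}\Lambda+\pwidth\card{\consge})$), and of the variable-leaves ($O(\card{\vars}\card{\cons}\card{\dom}\log\card{\dom})$, plus lower-order terms of the form $\pwidth^2\card{\vars}$), every term is absorbed into the claimed bound. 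The one subtlety to check carefully is that the per-parent-shape running lists, taken together, are updated by a total of at most $\pwidth^3$ merge steps across all triples of $v$, so that no extra factor of $\pwidth$ creeps in; this is why I process triples one at a time, merging incrementally, rather than collecting and merging $\Theta(\pwidth^2)$ lists per parent shape at once.
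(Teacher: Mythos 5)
Your proposal is correct and follows essentially the same route as the paper's proof: the same table of sorted top-$k$ lists per shape, the same $O(k\log k)$ heap-based Cartesian-sum step per triple of linked shapes justified by \cref{prop decomposition}, and the same pointer-based reconstruction at the root. The only (immaterial) difference is that you merge each triple's candidate list incrementally into a running per-shape list in $O(k)$ per merge, whereas the paper collects all $N_{(\Phi,\Psi)}$ candidate lists for a parent shape and performs a single $k$-way heap merge in $O(N_{(\Phi,\Psi)}+k\log N_{(\Phi,\Psi)})$; both yield $O(\pwidth^3 k)$ total merge cost and the same overall bound.
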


\begin{prf}
The proof follows the same structure as that of \cref{th optimization}, and we only highlight the differences here.

\textbf{Table.}
For a shape $(\Phi,\Psi)$, the content of the table $\tab_v$ at this index
should be a sorted list of $k$ highest-value assignments from $\vars_v$ to $\dom$ of shape $(\Phi,\Psi)$, with their respective values.

\textbf{Leaves corresponding to constraints.}
Here, we set $\tab_l(\Phi,\Psi) := (\epsilon,0)$ if $\epsilon$ has shape $(\Phi,\Psi)$, and we set $\tab_l(\Phi,\Psi)$ as an empty list otherwise.

\textbf{Leaves corresponding to variables.}
Here, we set $\tab_l(\Phi,\Psi)$ to be a sorted list of $k$-highest value assignments of shape $(\Phi,\Psi)$, with their respective values, among the $\card{\dom}$ that we constructed.
The only extra step required, after constructing the $\card{\dom}$ assignments, is to order them according to their value, which can be done in time $O(\card{\dom}\log\pare{\card{\dom}})$.

\textbf{Inner vertices.}
Fix a triple $(\Phi,\Psi)$, $(\Phi_1,\Psi_1)$, $(\Phi_2,\Psi_2)$ of linked shapes for $v,v_1,v_2$.
It is well known that we can find $k$ largest values, in sorted order, in the Cartesian sum of two sorted arrays in time $O(k \log(k))$, using a best-first search strategy with a max-heap, and that 
only $k$ largest elements in each array need to be considered.
We then construct the values of the $k$ highest-value assignments, in sorted order, among all assignments 
of the form $\ass_1 \cup \ass_2$ with $\ass_1$ of shape $(\Phi_1,\Psi_1)$ and $\ass_2$ of shape $(\Phi_2,\Psi_2)$, by only considering those with $\ass_1$ in $\tab_{v_1}(\Phi_1,\Psi_1)$ and $\ass_2$ in $\tab_{v_2}(\Phi_2,\Psi_2)$.
We call these $k$ highest-value assignments a ``candidate top-$k$ for $(\Phi,\Psi)$.''
Since we store the corresponding assignments implicitly, the total time for one triple is $O(k \log(k))$.
This is done for each triple of linked shapes.

Now fix one shape $(\Phi,\Psi)$ for $v$, consider all candidate top-$k$ for $(\Phi,\Psi)$, and denote by $N_{(\Phi,\Psi)} \le \pwidth^3$ their number.
It then follows from \cref{prop decomposition} that we can set the content of $\tab_v(\Phi,\Psi)$ by finding the $k$ highest-value assignments, in sorted order, among all $N_{(\Phi,\Psi)}$ candidate top-$k$ for $(\Phi,\Psi)$.
It is well known that we can find $k$ largest values, in sorted order, in $N$ sorted arrays of $k$ elements each in time $O(N+k \log (N))$, using a $k$-way merge with a max-heap.
Since we store the assignments implicitly, the total time for the merge corresponding to $(\Phi,\Psi)$ is $O(N_{(\Phi,\Psi)} + k \log (N_{(\Phi,\Psi)}))$.
Using $\sum_{\text{$(\Phi,\Psi)$ shape of $v$}} N_{(\Phi,\Psi)} \le \pwidth^3$, we obtain that the total time for the merges corresponding to the shapes of $v$ is
$O(\pwidth^3 k)$.
Therefore, for each inner vertex $v$ of $T$, the table $\tab_v$ can be computed in time $O\pare{\pwidth^3 \card{\cons} + \pwidth^3 k \log(k)}$.
Since there are $\card{\vars}+\card{\cons}-1$ inner vertices of $T$, in total they require $O(\pwidth^3 \pare{\card{\vars}+\card{\cons}} \pare{\card{\cons} + k \log(k)})$ time.

\textbf{Root.}
From \cref{rem root vshape}, 
the content of $\tab_r$, indexed by $(\epsilon, \cons / \epsilon)$, implicitly contains a sorted list of $k$ highest-value assignments from $\vars$ to $\dom$ that satisfy the constraints in $\cons$.
Following our pointers, we can construct them explicitly in time $O\pare{k\pare{\card{\vars}+\card{\cons}}}$.
\end{prf}


\subsection{Weighted constraint violation}
\label{sec weighted constraint violation}

In this section we define a problem that significantly extends the weighted MaxSAT problem.
This problem is inherently defined on a separable system with $\consin = \emptyset$.
We remark that this problem can also be extended to general separable systems, but such an extension appears to offer little value and practical relevance.
In the \emph{weighted constraint violation problem,} we are given a separable system of the form $\pare{\vars,\dom,\emptyset,\cons}$, and $\wei^\con \in \R$ for every $\con \in \cons$.
For every assignment $\ass : \vars \to \dom$, we define its \emph{weight}
\begin{align*}
\wei(\ass) & := \sum_{\con \in \cons} \wei^\con \min\bra{\sum_{\var \in \vars} g^\con_\var(\ass(\var)), \gamma^\con} \\
& = \sum_{\con \in \cons} \wei^\con \min\bra{\cb^\con\pare{\ass}, \gamma^\con} \\
& = \sum_{\con \in \cons} \wei^\con (\cons/\ass)^\con.
\end{align*}
The goal is 
to find a highest-weight assignment from $\vars$ to $\dom$.

The name of the problem arises by considering the special case $\wei^\con \ge 1$, where for all $\con \in \cons$, we have $\wei(\ass) \le \sum_{\con \in \cons} \wei^\con \gamma^\con$, for every $\ass : \vars \to \dom$, and 
$\wei(\ass) = \sum_{\con \in \cons} \wei^\con \gamma^\con$ if and only if $\ass$ satisfies all the constraints in $\cons$.
Compared to the problems considered in \cref{sec counting,sec opt}, the weighted constraint violation problem might seem more exotic.
However, it contains as special cases MaxSAT and weighted MaxSAT, and it will allows us to show how our techniques can be used to significantly extend the known tractability of these problems for formulas with bounded PS-width \cite{SaeTelVat14}.
To write the weighted MaxSAT problem as a separable constraint problem, it suffices to observe that each clause can be written as a constraint $\con$ with $\dom := \bra{0,1}$, $\gamma^\con:=1$, and where $\wei^\con$ is the (non-negative) weight of $\con$ in the weighted MaxSAT problem.

While the approach that we use to solve the weighted constraint violation problem is still based on the theory of shapes that we developed in \cref{sec projections}, 
we will not be using the concept of \emph{assignments of shape,} which we introduced in \cref{sec shapes}, and that played a key role in \cref{sec counting,sec opt}.
Instead, we rely on the ``weaker'' notion of \emph{assignments of configuration,} which we define next.

Let $T$ be a branch decomposition of $\vars \cup \cons$, let $v$ be a vertex of $T$, and let $(\Phi,\Psi)$ be a shape for $v$.
We say that $\ass : \vars_v \to \dom$ has \emph{configuration} $\Phi$ if condition \eqref{cond s1 ge} holds, that is,
\begin{align*}
\Phi = \overline{\cons_v} / \ass.
\end{align*}
Furthermore, for $\ass : \vars_v \to \dom$ we define its \emph{$\Psi$-weight}
\begin{align*}
\wei^{\Psi} (\ass) := 
\sum_{\con \in \cons_v}
\wei^\con \min\bra{\cb^\con\pare{\ass}, \gamma^\con - \Psi^\con} \in \Z.
\end{align*}
In our algorithm, for the shape $(\Phi,\Psi)$, we will compute a highest-$\Psi$-weight assignment from $\vars_v$ to $\dom$ of configuration $\Phi$.
Such an assignment will be simple to compute in the leaves, and will yield the solution to the problem in the root, as discussed below.

\begin{remark}
\label{rem pB}
Consider a separable system of the form $\pare{\vars,\dom,\emptyset,\cons}$, let $T$ be a branch decomposition of $\vars \cup \cons$, and let $r$ be the root of $T$.
Consider the shape $(\epsilon, \cons / \epsilon)$ for $r$ (see \cref{rem proj} \ref{rem proj root phi} and \ref{rem proj root psi}),
and note that $\vars_r = \vars$.
Note that every assignment from $\vars$ to $\dom$ has configuration $\epsilon$.
Furthermore, $\cons / \epsilon$ is given explicitly by
\begin{align*}
&\pare{\cons / \epsilon}^\con = 0 && \forall \con \in \cons,
\end{align*}
thus
$\wei^{\cons/\epsilon}_r(\ass) = \wei(\ass)$
for every assignment $\ass$ from $\vars$ to $\dom$.
Therefore, the set of highest-$\pare{\cons / \epsilon}$-weight assignments from $\vars$ to $\dom$ of configuration $\epsilon$ coincides with the set of highest-weight assignments from $\vars$ to $\dom$.
\end{remark}

The next result shows how $\wei^{\Psi}\pare{\ass_1 \cup \ass_2}$ can be easily computed from $\wei^{\Psi_1} \pare{\ass_1}$ and $\wei^{\Psi_2} \pare{\ass_2}$;
It will be the key in computing the highest-$\Psi$-weight assignments, traversing $T$ in a bottom up manner.

\begin{lemma}
\label{lem sum wei}
Consider a separable system of the form $\pare{\vars,\dom,\emptyset,\cons}$, and let $\wei^\con \in \R$ for every $\con \in \cons$.
Let $T$ be a branch decomposition of $\vars \cup \cons$, and let $v$ be an inner vertex of $T$ with children $v_1$ and $v_2$.
Let $(\Phi,\Psi)$, $(\Phi_1,\Psi_1)$, $(\Phi_2,\Psi_2)$ be linked shapes for $v,v_1,v_2$.
Let $\ass_1,\assb_1 : \vars_{v_1} \to \dom$ of configuration $\Phi_1$ and $\ass_2,\assb_2 : \vars_{v_2} \to \dom$ of configuration $\Phi_2$.
Then, 
\begin{align*}
\wei^{\Psi}\pare{\ass_1 \cup \ass_2} 
= 
\wei^{\Psi_1} \pare{\ass_1} + 
\wei^{\Psi_2} \pare{\ass_2} + 
\sum_{\con \in \cons_{v_1}}
\wei^\con \min\bra{\Phi_2^\con,\gamma^\con-\Psi^\con} +
\sum_{\con \in \cons_{v_2}}
\wei^\con \min\bra{\Phi_1^\con,\gamma^\con-\Psi^\con}.
\end{align*}
\end{lemma}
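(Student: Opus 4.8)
The plan is to expand $\wei^{\Psi}(\ass_1 \cup \ass_2)$ directly from its definition, split the sum over $\cons_v$ into the two pieces $\cons_{v_1}$ and $\cons_{v_2}$ (using that $\cons_v$ is the disjoint union of $\cons_{v_1}$ and $\cons_{v_2}$), and then work on each piece separately. By symmetry it suffices to handle the sum over $\cons_{v_1}$; the sum over $\cons_{v_2}$ is identical with the roles of the two children swapped. So the core of the argument is to show, for $\con \in \cons_{v_1}$, an identity relating $\min\bra{\cb^\con(\ass_1 \cup \ass_2), \gamma^\con - \Psi^\con}$ to the corresponding term $\min\bra{\cb^\con(\ass_1), \gamma^\con - \Psi_1^\con}$ appearing in $\wei^{\Psi_1}(\ass_1)$, plus a correction term that does not depend on $\ass_1$.

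\textbf{The key per-constraint computation.} Fix $\con \in \cons_{v_1}$. First, since $\con \in \cons_{v_1}$ we have $\con \in \cons_v \cap \overline{\cons_{v_2}}$, so $\cb^\con(\ass_2) = (\overline{\cons_{v_2}}/\ass_2)^\con = \Phi_2^\con$ by the configuration condition \eqref{cond s1 ge} for $\ass_2$ (here using that $\con$ is a $\consge$-constraint, as $\consin = \emptyset$). By \cref{obs vb} \ref{obs vb 3}, $\cb^\con(\ass_1 \cup \ass_2) = \cb^\con(\ass_1) + \Phi_2^\con$. Also, the linked-shapes relation \eqref{cond l2 ge} gives $\Psi_1^\con = \min\bra{\Psi^\con + \Phi_2^\con, \gamma^\con}$. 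Now I want to show
\begin{align*}
\min\bra{\cb^\con(\ass_1) + \Phi_2^\con,\ \gamma^\con - \Psi^\con}
= \min\bra{\cb^\con(\ass_1),\ \gamma^\con - \Psi_1^\con}
+ \min\bra{\Phi_2^\con,\ \gamma^\con - \Psi^\con}.
\end{align*}
The natural tool here is \cref{lem 3min}, applied with $a := \cb^\con(\ass_1)$, $b := \Phi_2^\con$, $\gamma := \gamma^\con$, and $c := \Psi^\con$: it yields $\min\bra{\cb^\con(\ass_1) + \Phi_2^\con, \gamma^\con - \Psi^\con} = \min\bra{\cb^\con(\ass_1) + \min\bra{\Phi_2^\con,\gamma^\con}, \gamma^\con - \Psi^\con}$. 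I then need to reconcile this with the right-hand side; the idea is to case-split on whether $\Phi_2^\con \le \gamma^\con - \Psi^\con$ or not. In the first case $\Psi_1^\con = \Psi^\con + \Phi_2^\con$ (since $\Psi^\con + \Phi_2^\con \le \gamma^\con$) and the claimed identity becomes $\min\bra{\cb^\con(\ass_1) + \Phi_2^\con, \gamma^\con - \Psi^\con} = \min\bra{\cb^\con(\ass_1), \gamma^\con - \Psi^\con - \Phi_2^\con} + \Phi_2^\con$, which is just $\min\bra{x+t, M} = \min\bra{x, M-t} + t$ for $t = \Phi_2^\con \ge 0$. In the second case $\Phi_2^\con > \gamma^\con - \Psi^\con \ge 0$, so $\Psi_1^\con = \gamma^\con$, hence $\gamma^\con - \Psi_1^\con = 0$, so the first term on the right is $\min\bra{\cb^\con(\ass_1), 0} = 0$ (nonnegativity), the second is $\gamma^\con - \Psi^\con$, and the left side is also $\gamma^\con - \Psi^\con$ since $\cb^\con(\ass_1) + \Phi_2^\con \ge \Phi_2^\con > \gamma^\con - \Psi^\con$. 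Both cases check out.

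\textbf{Assembling.} Summing the per-constraint identity over $\con \in \cons_{v_1}$, multiplied by $\wei^\con$, gives $\sum_{\con \in \cons_{v_1}} \wei^\con \min\bra{\cb^\con(\ass_1 \cup \ass_2), \gamma^\con - \Psi^\con} = \wei^{\Psi_1}(\ass_1) + \sum_{\con \in \cons_{v_1}} \wei^\con \min\bra{\Phi_2^\con, \gamma^\con - \Psi^\con}$, and the symmetric argument over $\cons_{v_2}$ gives the corresponding statement with $\wei^{\Psi_2}(\ass_2)$ and the $\Phi_1^\con$ correction term. Adding the two and using the disjoint decomposition $\cons_v = \cons_{v_1} \sqcup \cons_{v_2}$ yields exactly the stated formula. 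Note that the hypothesis mentions $\assb_1, \assb_2$ as well, but the identity holds for any single pair $\ass_1$ of configuration $\Phi_1$ and $\ass_2$ of configuration $\Phi_2$ — the extra named assignments are presumably there so that a later corollary can compare $\wei^\Psi$ across different choices, and are not needed for this lemma itself.

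\textbf{Main obstacle.} The only delicate point is the per-constraint identity and its case analysis around whether $\Psi^\con + \Phi_2^\con$ exceeds $\gamma^\con$ (equivalently, whether the $\min$ in \eqref{cond l2 ge} is saturated). One must be careful that all quantities involved — $\cb^\con(\ass_1)$, $\Phi_2^\con$, $\gamma^\con - \Psi^\con$, $\gamma^\con - \Psi_1^\con$ — are nonnegative (the first three by \cref{obs vb} \ref{obs vb 1} and the definition of projections, the last because $\Psi_1 \in \proj(\overline{v_1})$ so $\Psi_1^\con \le \gamma^\con$), since \cref{lem 3min} and the truncation identities rely on nonnegativity. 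Once those sign conditions are in hand, everything is a routine $\min$-manipulation.
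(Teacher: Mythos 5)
Your overall strategy coincides with the paper's: split the sum over $\cons_v = \cons_{v_1} \sqcup \cons_{v_2}$, reduce to the per-constraint identity
$\min\bra{\cb^\con\pare{\ass_1 \cup \ass_2}, \gamma^\con - \Psi^\con} = \min\bra{\cb^\con\pare{\ass_1}, \gamma^\con - \Psi_1^\con} + \min\bra{\Phi_2^\con,\gamma^\con-\Psi^\con}$ for $\con \in \cons_{v_1}$, and verify it by a case split on whether the truncation in \eqref{cond l2 ge} saturates. The case analysis itself is correct and matches the paper's.

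There is, however, one genuinely wrong step in your per-constraint computation: the claim that $\cb^\con(\ass_2) = \pare{\overline{\cons_{v_2}} / \ass_2}^\con = \Phi_2^\con$. By definition $\pare{\overline{\cons_{v_2}} / \ass_2}^\con = \min\bra{\cb^\con\pare{\ass_2},\gamma^\con}$, and $\cb^\con\pare{\ass_2}$ can strictly exceed $\gamma^\con$ (nothing caps the contribution of the variables in $\vars_{v_2}$ to a $\consge$-constraint), so the asserted equality $\cb^\con\pare{\ass_1 \cup \ass_2} = \cb^\con\pare{\ass_1} + \Phi_2^\con$ is false in general. The conclusion you actually need, namely
$\min\bra{\cb^\con\pare{\ass_1 \cup \ass_2}, \gamma^\con - \Psi^\con} = \min\bra{\cb^\con\pare{\ass_1} + \Phi_2^\con, \gamma^\con - \Psi^\con}$,
is nevertheless true, but it must be obtained by first applying \cref{obs vb}~\ref{obs vb 3} to write $\cb^\con\pare{\ass_1 \cup \ass_2} = \cb^\con\pare{\ass_1} + \cb^\con\pare{\ass_2}$ and then invoking \cref{lem 3min} with $b := \cb^\con\pare{\ass_2}$ and $c := \Psi^\con$ to replace $\cb^\con\pare{\ass_2}$ by $\min\bra{\cb^\con\pare{\ass_2},\gamma^\con} = \Phi_2^\con$ inside the outer minimum; this is exactly what the paper does. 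Your own invocation of \cref{lem 3min} with $b := \Phi_2^\con$ is vacuous (since $\Phi_2^\con \le \gamma^\con$ already) and does not repair the gap. Once this step is fixed, the rest of your argument goes through; your side remark that $\assb_1,\assb_2$ play no role in this lemma is also correct.
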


\begin{prf}
It suffices to prove that, for every $\con \in \cons_{v_1}$,
\begin{align}
\label{eq claim min ind}
\min\bra{\cb^\con\pare{\ass_1 \cup \ass_2}, \gamma^\con - \Psi^\con} 
= \min\bra{\cb^\con\pare{\ass_1}, \gamma^\con - \Psi_1^\con} + \min\bra{\Phi_2^\con,\gamma^\con-\Psi^\con}.
\end{align}
In fact, from \eqref{eq claim min ind} and the symmetric identity for $v_2$, we obtain
\begin{align*}
\wei^{\Psi} \pare{\ass_1 \cup \ass_2} 
& = 
\sum_{\con \in \cons_v} \wei^\con \min\bra{\cb^\con\pare{\ass_1 \cup \ass_2}, \gamma^\con - \Psi^\con} \\
& =
\sum_{\con \in \cons_{v_1}}
\wei^\con \min\bra{\cb^\con\pare{\ass_1 \cup \ass_2}, \gamma^\con - \Psi^\con}
+
\sum_{\con \in \cons_{v_2}}
\wei^\con \min\bra{\cb^\con\pare{\ass_1 \cup \ass_2}, \gamma^\con - \Psi^\con} \\
& = 
\sum_{\con \in \cons_{v_1}}
\wei^\con \min\bra{\cb^\con\pare{\ass_1}, \gamma^\con - \Psi_1^\con} + 
\sum_{\con \in \cons_{v_1}}
\wei^\con \min\bra{\Phi_2^\con,\gamma^\con-\Psi^\con} \\
& \qquad + 
\sum_{\con \in \cons_{v_2}}
\wei^\con \min\bra{\cb^\con\pare{\ass_2}, \gamma^\con - \Psi_2^\con} + 
\sum_{\con \in \cons_{v_2}}
\wei^\con \min\bra{\Phi_1^\con,\gamma^\con-\Psi^\con} \\
& = 
\wei^{\Psi_1} \pare{\ass_1} + 
\wei^{\Psi_2} \pare{\ass_2} + 
\sum_{\con \in \cons_{v_1}}
\wei^\con \min\bra{\Phi_2^\con,\gamma^\con-\Psi^\con} +
\sum_{\con \in \cons_{v_2}}
\wei^\con \min\bra{\Phi_1^\con,\gamma^\con-\Psi^\con}.
\end{align*}

In the remainder of the proof, we show \eqref{eq claim min ind}.
First, we simplify the left hand side.
\begin{align*}
\min\bra{\cb^\con\pare{\ass_1 \cup \ass_2}, \gamma^\con - \Psi^\con}
& = \min\bra{\cb^\con\pare{\ass_1} + \cb^\con\pare{\ass_2}, \gamma^\con - \Psi^\con}
&& \text{(from \cref{obs vb} \ref{obs vb 3})} \\
& = \min\bra{\cb^\con\pare{\ass_1} + \min\bra{\cb^\con\pare{\ass_2},\gamma^\con}, \gamma^\con - \Psi^\con}
&& \text{(from \cref{lem 3min})} \\
& = \min\bra{\cb^\con\pare{\ass_1} + \pare{\overline{\cons_{v_2}} / \ass_2}^\con, \gamma^\con - \Psi^\con} \\
& = \min\bra{\cb^\con\pare{\ass_1} + \Phi_2^\con, \gamma^\con - \Psi^\con}
&& \text{($\ass_2$ of configuration $\Phi_2$)}.
\end{align*}
Next, we rewrite the first minimum on the right hand side using \eqref{cond l2}.
\begin{align*}
\min\bra{\cb^\con\pare{\ass_1}, \gamma^\con - \Psi_1^\con} 
& = \min\bra{\cb^\con\pare{\ass_1}, \gamma^\con - \min\bra{\Psi^\con + \Phi_2^\con, \gamma^\con}}.
\end{align*}
We can then rewrite \eqref{eq claim min ind} as follows:
\begin{align}
\label{eq min ind}
\min\bra{\cb^\con\pare{\ass_1}+\Phi_2^\con,\gamma^\con-\Psi^\con} 
- \min\bra{\cb^\con\pare{\ass_1},\gamma^\con-\min\bra{\Psi^\con+\Phi_2^\con,\gamma^\con}} 
= \min\bra{\Phi_2^\con,\gamma^\con-\Psi^\con}.
\end{align}
To check the identity \eqref{eq min ind}, we first consider the case $\Phi_2^\con \ge \gamma^\con - \Psi^\con$.
In this case, the right hand side of \eqref{eq min ind} equals $\gamma^\con - \Psi^\con$, and the left hand side equals
\begin{align*}
\pare{\gamma^\con-\Psi^\con} - \min\bra{\cb^\con\pare{\ass_1},\gamma^\con - \gamma^\con} 
= 
\gamma^\con-\Psi^\con.
\end{align*}
Next, we consider the case $\Phi_2^\con < \gamma^\con - \Psi^\con$.
In this case, the right hand side of \eqref{eq min ind} equals $\Phi_2^\con$ and the left hand side equals
\begin{align*}
\min\bra{\cb^\con\pare{\ass_1}+\Phi_2^\con,\gamma^\con-\Psi^\con} 
- \min\bra{\cb^\con\pare{\ass_1},\gamma^\con-\Psi^\con-\Phi_2^\con}.
\end{align*}
We add and subtract $\Phi_2^\con$, and bring $-\Phi_2^\con$ inside the first minimum.
\begin{align*}
\Phi_2^\con + \min\bra{\cb^\con\pare{\ass_1},\gamma^\con-\Psi^\con-\Phi_2^\con}
- \min\bra{\cb^\con\pare{\ass_1},\gamma^\con-\Psi^\con-\Phi_2^\con}
= \Phi_2^\con.
\end{align*}
\end{prf}

The following result is a direct consequence of \cref{lem sum wei}.

\begin{lemma}
\label{lem up best}
Consider a separable system of the form $\pare{\vars,\dom,\emptyset,\cons}$, and let $\wei^\con \in \R$ for every $\con \in \cons$.
Let $T$ be a branch decomposition of $\vars \cup \cons$, and let $v$ be an inner vertex of $T$ with children $v_1$ and $v_2$.
Let $(\Phi,\Psi)$, $(\Phi_1,\Psi_1)$, $(\Phi_2,\Psi_2)$ be linked shapes for $v,v_1,v_2$.
Let $\ass_1,\assb_1 : \vars_{v_1} \to \dom$ of configuration $\Phi_1$ and $\ass_2,\assb_2 : \vars_{v_2} \to \dom$ of configuration $\Phi_2$.
If $\wei^{\Psi_1} \pare{\assb_1} \le \wei^{\Psi_1} \pare{\ass_1}$ and $\wei^{\Psi_2} \pare{\assb_2} \le \wei^{\Psi_2} \pare{\ass_2}$, then $\wei^{\Psi} \pare{\assb_1 \cup \assb_2} \le \wei^{\Psi} \pare{\ass_1 \cup \ass_2}$.
\end{lemma}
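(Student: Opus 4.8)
The plan is to derive \cref{lem up best} directly from the additive decomposition in \cref{lem sum wei}. Since $(\Phi,\Psi)$, $(\Phi_1,\Psi_1)$, $(\Phi_2,\Psi_2)$ are linked shapes, and both $\ass_1\cup\ass_2$ and $\assb_1\cup\assb_2$ are assignments from $\vars_v$ to $\dom$ in which the restrictions to $\vars_{v_1}$ have configuration $\Phi_1$ and the restrictions to $\vars_{v_2}$ have configuration $\Phi_2$, I may apply \cref{lem sum wei} to both pairs. This gives
\begin{align*}
\wei^{\Psi}\pare{\ass_1 \cup \ass_2}
&= \wei^{\Psi_1}\pare{\ass_1} + \wei^{\Psi_2}\pare{\ass_2} + R, \\
\wei^{\Psi}\pare{\assb_1 \cup \assb_2}
&= \wei^{\Psi_1}\pare{\assb_1} + \wei^{\Psi_2}\pare{\assb_2} + R,
\end{align*}
where
\begin{align*}
R := \sum_{\con \in \cons_{v_1}} \wei^\con \min\bra{\Phi_2^\con,\gamma^\con-\Psi^\con}
+ \sum_{\con \in \cons_{v_2}} \wei^\con \min\bra{\Phi_1^\con,\gamma^\con-\Psi^\con}
\end{align*}
is the same correction term in both equations, because it depends only on $\Phi_1,\Phi_2,\Psi$ and not on the particular assignments.

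The key point is that $R$ is identical across the two applications: in \cref{lem sum wei} the extra sums involve only $\Phi_1^\con$, $\Phi_2^\con$, $\gamma^\con$, $\Psi^\con$, and $\wei^\con$, none of which change when we replace $\ass_1,\ass_2$ by $\assb_1,\assb_2$ (the configurations are still $\Phi_1$ and $\Phi_2$ respectively, and the linked shapes are the same). Subtracting $R$ from both sides, the inequality $\wei^{\Psi}\pare{\assb_1 \cup \assb_2} \le \wei^{\Psi}\pare{\ass_1 \cup \ass_2}$ reduces to
\begin{align*}
\wei^{\Psi_1}\pare{\assb_1} + \wei^{\Psi_2}\pare{\assb_2}
\le \wei^{\Psi_1}\pare{\ass_1} + \wei^{\Psi_2}\pare{\ass_2},
\end{align*}
which follows immediately by adding the two hypotheses $\wei^{\Psi_1}\pare{\assb_1} \le \wei^{\Psi_1}\pare{\ass_1}$ and $\wei^{\Psi_2}\pare{\assb_2} \le \wei^{\Psi_2}\pare{\ass_2}$.

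There is essentially no obstacle here; the work was already done in \cref{lem sum wei}. The only thing to be careful about is the bookkeeping: one must verify that the hypotheses of \cref{lem sum wei} apply verbatim — namely that $\ass_1\cup\ass_2$ and $\assb_1\cup\assb_2$ are legitimate inputs, which holds since $\vars_v$ is the disjoint union of $\vars_{v_1}$ and $\vars_{v_2}$ and the configuration conditions on the two parts are exactly what \cref{lem sum wei} requires. Thus the proof is a two-line consequence of \cref{lem sum wei} together with the observation that the correction term $R$ cancels.
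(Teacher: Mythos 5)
Your proof is correct and follows essentially the same route as the paper: both apply \cref{lem sum wei} to $\ass_1\cup\ass_2$ and to $\assb_1\cup\assb_2$, observe that the correction term depends only on $\Phi_1,\Phi_2,\Psi,\gamma^\con,\wei^\con$ and hence cancels, and conclude by adding the two hypothesized inequalities. No gaps.
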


\begin{prf}
From \cref{lem sum wei},
\begin{align*}
\wei^{\Psi}\pare{\assb_1 \cup \assb_2} 
& = 
\wei^{\Psi_1} \pare{\assb_1} + 
\wei^{\Psi_2} \pare{\assb_2} + 
\sum_{\con \in \cons_{v_1}}
\wei^\con \min\bra{\Phi_2^\con,\gamma^\con-\Psi^\con} +
\sum_{\con \in \cons_{v_2}}
\wei^\con \min\bra{\Phi_1^\con,\gamma^\con-\Psi^\con} \\
& \le
\wei^{\Psi_1} \pare{\ass_1} + 
\wei^{\Psi_2} \pare{\ass_2} + 
\sum_{\con \in \cons_{v_1}}
\wei^\con \min\bra{\Phi_2^\con,\gamma^\con-\Psi^\con} +
\sum_{\con \in \cons_{v_2}}
\wei^\con \min\bra{\Phi_1^\con,\gamma^\con-\Psi^\con} \\
& =
\wei^{\Psi}\pare{\ass_1 \cup \ass_2}.
\end{align*}
\end{prf}

We are now ready to present our algorithm for weighted constraint violation.

\begin{theorem}[Weighted constraint violation]
\label{th weighted constraint violation}
Consider a separable system of the form $\pare{\vars,\dom,\emptyset,\cons}$, let $\wei^\con \in \R$ for every $\con \in \cons$, and let $T$ be a branch decomposition of $\vars \cup \cons$ of projection-width $\pwidth$.
Then, the weighted constraint violation problem can be solved in time 
\begin{equation*}
O\pare{
\pwidth^3 \pare{\card{\vars}+\card{\cons}} \card{\cons}
+
\card{\vars} \card{\cons} \card{\dom}\log\pare{\card{\dom}}}.
\end{equation*}
\end{theorem}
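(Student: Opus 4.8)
The plan is to follow the same bottom-up dynamic programming over the branch decomposition $T$ that underlies \cref{th optimization}, but replacing ``assignments of shape'' by the weaker notion of ``assignments of configuration.'' First I would invoke \cref{prop all proj} to compute $\proj(v)$ and $\proj(\overline v)$ for every vertex $v$ of $T$; since $\consin = \emptyset$, the term $\pwidth\card{\consin}\Lambda$ disappears, and this step costs $O\pare{\pwidth^2 \log(\pwidth) \pare{\card{\vars}+\card{\cons}} \card{\cons} + \card{\vars}\card{\cons}\card{\dom}\log\pare{\card{\dom}}}$. Then, traversing $T$ from the leaves to the root, I would build for each vertex $v$ a table $\tab_v$ indexed by shapes $(\Phi,\Psi)$ for $v$, where $\tab_v(\Phi,\Psi)$ stores (a pointer to) a highest-$\Psi$-weight assignment $\ass : \vars_v \to \dom$ of configuration $\Phi$, together with its $\Psi$-weight $\wei^\Psi(\ass)$; if no assignment has configuration $\Phi$ we set $\tab_v(\Phi,\Psi) := \NA$. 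Note that the stored assignment depends only on $\Phi$, but the stored value depends on $\Psi$, so the same assignment may appear under several shapes with different recorded weights.

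For the \textbf{leaf cases}: a leaf $l$ corresponding to a constraint $\con$ has $\vars_l = \emptyset$, so the only assignment is $\epsilon$, which has configuration $\epsilon \in \proj(l)$ and $\wei^\Psi(\epsilon) = 0$ for every $\Psi \in \proj(\overline l)$; we set $\tab_l(\epsilon,\Psi) := (\epsilon,0)$. A leaf $l$ corresponding to a variable $\var$ has $\card{\dom}$ assignments $\ass : \bra{\var} \to \dom$; for each we compute its configuration $\overline{\cons_l}/\ass = \cons/\ass \in \proj(l)$ in $O(\card{\cons})$ time and its $\Psi$-weight $\sum_{\con \in \cons_l} \wei^\con \min\bra{\cb^\con(\ass), \gamma^\con - \Psi^\con}$; but since $l$ is a variable leaf, $\cons_l = \emptyset$, so $\wei^\Psi(\ass) = 0$ for every $\Psi$, and we just keep, for each $\Phi \in \proj(l)$, any assignment realizing it. These steps cost $O(\card{\cons}^2)$ and $O\pare{\card{\vars}\card{\cons}\card{\dom}}$ respectively, dominated by the bound above.

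For an \textbf{inner vertex} $v$ with children $v_1, v_2$, I would loop over all triples of linked shapes $(\Phi,\Psi),(\Phi_1,\Psi_1),(\Phi_2,\Psi_2)$ for $v,v_1,v_2$, of which there are at most $\pwidth^3$, each constructible in $O(\card{\cons})$ time as in \cref{th optimization}. For each such triple with $\tab_{v_1}(\Phi_1,\Psi_1) \neq \NA$ and $\tab_{v_2}(\Phi_2,\Psi_2) \neq \NA$, holding best assignments $\ass_1, \ass_2$, the assignment $\ass_1 \cup \ass_2$ has configuration $\Phi$ (this is exactly \eqref{cond l1 ge}, i.e. $\Phi = \overline{\cons_v}/(\ass_1 \cup \ass_2)$, which holds since $\ass_1,\ass_2$ have configurations $\Phi_1,\Phi_2$ and \cref{lem phi ass} applies), and by \cref{lem sum wei} its $\Psi$-weight equals $\wei^{\Psi_1}(\ass_1) + \wei^{\Psi_2}(\ass_2)$ plus a correction term $\sum_{\con \in \cons_{v_1}} \wei^\con \min\bra{\Phi_2^\con,\gamma^\con-\Psi^\con} + \sum_{\con \in \cons_{v_2}} \wei^\con \min\bra{\Phi_1^\con,\gamma^\con-\Psi^\con}$ that depends only on the shapes, not on the assignments, and is computable in $O(\card{\cons})$ time. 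Crucially, \cref{lem up best} guarantees that combining the per-child \emph{optima} yields the overall optimum among all assignments $\ass_1' \cup \ass_2'$ with $\ass_1'$ of configuration $\Phi_1$ and $\ass_2'$ of configuration $\Phi_2$, so it is safe to keep only one representative per child shape. For each shape $(\Phi,\Psi)$ for $v$ I would take the maximum over all triples feeding into it, storing pointers to the winning child shapes so no explicit assignment is built; this costs $O(\pwidth^3 \card{\cons})$ per inner vertex and $O(\pwidth^3(\card{\vars}+\card{\cons})\card{\cons})$ in total. Finally, at the \textbf{root} $r$, \cref{rem pB} tells us that every assignment has configuration $\epsilon$, that $(\cons/\epsilon)^\con = 0$ for all $\con$, and hence $\wei^{\cons/\epsilon}(\ass) = \wei(\ass)$; therefore $\tab_r(\epsilon,\cons/\epsilon)$ points to a highest-weight assignment, which we reconstruct explicitly in $O(\card{\vars}+\card{\cons})$ time by following pointers down the tree.

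The correctness backbone is a straightforward induction on $T$: the hypothesis ``$\tab_v(\Phi,\Psi)$ holds a highest-$\Psi$-weight assignment of configuration $\Phi$, or $\NA$ if none exists'' is verified at the leaves directly, and propagated at inner vertices by combining \cref{lem sum wei} (exact additive decomposition of the weight across the linked shapes) with \cref{lem up best} (monotonicity, justifying that only per-child optima matter) and the observation that every assignment $\ass : \vars_v \to \dom$ of configuration $\Phi$ decomposes uniquely as $\ass_1 \cup \ass_2$ with $\ass_i$ of some configuration $\Phi_i$, where $(\Phi,\Psi),(\Phi_1,\Psi_1),(\Phi_2,\Psi_2)$ are linked for the appropriate $\Psi_1,\Psi_2$ determined by \eqref{cond l2},\eqref{cond l3}. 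The main subtlety — and the step I would write most carefully — is this last decomposition claim: unlike in \cref{sec shapes}, we are not tracking the satisfaction conditions \eqref{cond s2 in},\eqref{cond s2 ge}, so I need to argue that ranging over all triples of linked shapes feeding into $(\Phi,\Psi)$ exactly partitions the assignments of configuration $\Phi$ according to the configurations of their two restrictions, which follows because the configuration $\Phi_i := \overline{\cons_{v_i}}/\ass_i$ is uniquely determined by $\ass_i$, and then $\Psi_i$ is forced by \eqref{cond l2} (resp.\ \eqref{cond l3}) from $\Psi$ and $\Phi_{3-i}$, while $\Psi$ itself is the one we started from and $\Phi$ is recovered via \eqref{cond l1}. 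Everything else is routine, and the running time is the sum of the four displayed contributions, which is exactly the claimed bound.
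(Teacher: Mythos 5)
Your proposal is correct and follows essentially the same route as the paper's proof: compute all projections via \cref{prop all proj}, run a bottom-up dynamic program over shapes storing a highest-$\Psi$-weight assignment of each configuration $\Phi$, combine children via \cref{lem sum wei} and \cref{lem up best}, and read off the answer at the root via \cref{rem pB}, with the same correctness induction based on decomposing an arbitrary assignment of configuration $\Phi$ into restrictions whose configurations determine the unique feeding triple of linked shapes. The only (harmless) deviations are cosmetic: your $\NA$ case never occurs, since every $\Phi \in \proj(v)$ is by definition realized by some assignment of configuration $\Phi$, and the single map in $\proj(l)$ at a constraint leaf is $(\cons\setminus\bra{\con})/\epsilon$ rather than the empty map.
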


\begin{prf}
First, we apply \cref{prop all proj} and compute $\proj(v)$ and $\proj(\overline{v})$, for every vertex $v$ of $T$, in time $O\pare{\pwidth^2 \log(\pwidth) \pare{\card{\vars}+\card{\cons}} \card{\cons} + \card{\vars}\card{\cons}\card{\dom}\log\pare{\card{\dom}}}$.

\textbf{Table.}
Next, our algorithm will construct, for each vertex $v$ of $T$, a table $\tab_v$ indexed by the shapes $(\Phi,\Psi)$ for $v$.
For a shape $(\Phi,\Psi)$, the content of the table $\tab_v$ at this index, which we denote by $\tab_v(\Phi,\Psi)$, should be a pair $(\ass,\wei^{\Psi}(\ass))$, where $\ass$ is a highest-$\Psi$-weight assignment from $\vars_v$ to $\dom$ of configuration $\Phi$.
We now explain how we can compute the table $\tab_v$, for every vertex $v$ of $T$.
This is done in a bottom up manner.

\textbf{Leaves corresponding to constraints.}
Consider a leaf $l$ of $T$ corresponding to a constraint $\con \in \cons$.
From \cref{rem proj} \ref{rem proj constraint-leaf}, there is only one assignment from $\vars_l$ to $\dom$, the empty assignment $\epsilon$, that can be constructed in time $O(1)$.
Clearly, there is only one $\Phi \in \proj(l)$, and it satisfies \eqref{cond s1 ge}.
Hence, we set $\tab_l(\Phi,\Psi) := (\epsilon,0)$ for every shape $(\Phi,\Psi)$ for $l$.
Therefore, we can compute the table $\tab_l$ in time $O\pare{\pwidth}$.
Since there are $\card{\cons}$ leaves corresponding to constraints in $\cons$, in total they require $O\pare{\pwidth \card{\cons}}$ time.

\textbf{Leaves corresponding to variables.}
Consider a leaf $l$ of $T$ corresponding to a variable $\var \in \vars$.
From \cref{rem proj} \ref{rem proj variable-leaf}, there are $\card{\dom}$ assignment from $\vars_l$ to $\dom$, and they can be constructed in time $O(\card{\dom})$.
We now fix one such assignment $\ass$, and observe that there is precisely one $\Phi \in \proj(l)$ such that $\ass$ has configuration $\Phi$, and it can be constructed in time $O(\cons)$.
Once we have considered all $\card{\dom}$ assignments, for every shape $(\Phi,\Psi)$ for $l$, which are at most $\pwidth^2$, we set $\tab_l(\Phi,\Psi) := (\ass,0)$, where $\ass$ is any assignment of configuration $\Phi$. 
Therefore, we can compute the table $\tab_l$ in time $O\pare{\card{\cons} \card{\dom} + \pwidth^2}$.
Since there are $\card{\vars}$ leaves corresponding to variables, in total they require $O\pare{\card{\vars} \card{\cons} \card{\dom} + \pwidth^2 \card{\vars}}$ time.

\textbf{Inner vertices.}
Consider now an inner vertex $v$ of $T$, with children $v_1,v_2$.
We loop over all triples $(\Phi,\Psi)$, $(\Phi_1,\Psi_1)$, $(\Phi_2,\Psi_2)$ of linked shapes for $v,v_1,v_2$, as described in the proof of \cref{th optimization}.
For each triple of linked shapes, let $\pare{\ass_1,\wei^{\Psi_1}(\ass_1)} := \tab_{v_1}(\Phi_1,\Psi_1)$ and $\pare{\ass_2,\wei^{\Psi_2}(\ass_2)} := \tab_{v_2}(\Phi_2,\Psi_2)$.
We then construct the $\Psi$-weight of the ``candidate assignment for $(\Phi,\Psi)$'' given by $\ass_1 \cup \ass_2$, and this can be done in time $O(\card{\cons})$ due to \cref{lem sum wei}.

For each shape $(\Phi,\Psi)$ for $v$, we then set the highest-$\Psi$-weight assignment $\ass_1 \cup \ass_2$, among all candidate assignments for $(\Phi,\Psi)$, as the content of $\tab_v(\Phi,\Psi)$, together with its $\Psi$-weight.
To improve runtime, here we do not explicitly construct $\ass_1 \cup \ass_2$; instead, we store pointers to $(\Phi_1,\Psi_1)$ and $(\Phi_2,\Psi_2)$ giving the highest-$\Psi$-weight, so that this is done in time $O(1)$ instead of $O\pare{\card{\vars}}$.
Therefore, for each inner vertex $v$ of $T$, the table $\tab_v$ can be computed (partially implicitly) in time $O(\pwidth^3 \card{\cons})$.
Since there are $\card{\vars}+\card{\cons}-1$ inner vertices of $T$, in total they require $O(\pwidth^3 \pare{\card{\vars}+\card{\cons}} \card{\cons})$ time.

We now show that we set $\tab_v$ correctly, for every inner vertex $v$ of $T$.
Since we already proved it for the leaves of $T$, we now assume inductively that $\tab_{v_1}$ and $\tab_{v_2}$ have been set correctly, where $v_1,v_2$ are the children of $v$.
Let $(\Phi,\Psi)$ be a shape for $v$, and let $\assb$ be an assignment from $\vars_v$ to $\dom$ of configuration $\Phi$. 
Let $\assb_1$ and $\assb_2$ denote the restrictions of $\assb$ to $\vars_{v_1}$ and $\vars_{v_2}$, respectively, and set $\Phi_1 := \overline{\cons_{v_1}} / \assb_1$ and $\Phi_2 := \overline{\cons_{v_2}} / \assb_2$.
Consider now our procedure, when it considers the triple of linked shapes $(\Phi,\Psi)$, $(\Phi_1,\Psi_1)$, $(\Phi_2,\Psi_2)$ originated from $\Psi$, $\Phi_1$, $\Phi_2$.
Note that, according to \cref{lem phi ass} and \eqref{cond l1}, the linked shape $(\Phi,\Psi)$ that we just obtained is indeed the shape for $v$ we started from.
Let $\ass_1$ and $\ass_2$ be the assignments in $\tab_{v_1}(\Phi_1,\Psi_1)$ and $\tab_{v_2}(\Phi_2,\Psi_2)$, respectively.
So $\ass_1 \cup \ass_2$ is a candidate assignment for $\pare{\Phi,\Psi}$.
By induction, $\wei^{\Psi_1}\pare{\assb_1} \le \wei^{\Psi_1}\pare{\ass_1}$ and $\wei^{\Psi_2}\pare{\assb_2} \le \wei^{\Psi_2}\pare{\ass_2}$.
\cref{lem up best} then implies $\wei^{\Psi}(\assb) \le \wei^{\Psi}(\ass)$.
Now let $\assc : \vars_v \to \dom$ be the assignment in $\tab_v(\Phi,\Psi)$.
$\assc$ is a candidate assignment for $(\Phi,\Psi)$, so it has configuration $\Phi$, due to \cref{lem phi ass} and \eqref{cond l1}.
Furthermore, by construction, we have $\wei^{\Psi}(\ass) \le \wei^{\Psi}(\assc)$, therefore $\wei^{\Psi}(\assb) \le \wei^{\Psi}(\assc)$.

\textbf{Root.}
Once the table $\tab_v$ is computed, for every vertex $v$ of $T$, we consider the root $r$ of $T$, and the shape $\pare{\epsilon, \cons/\epsilon}$ for $r$ (see \cref{rem proj} \ref{rem proj root phi} and \ref{rem proj root psi}).
From \cref{rem pB}, the table $\tab_r$, indexed by $\pare{\epsilon, \cons/\epsilon}$, implicitly contains a highest-weight assignment from $\vars$ to $\dom$.
Following our pointers, we can construct it explicitly in time $O\pare{\card{\vars}+\card{\cons}}$.
\end{prf}



\section{Some consequences}
\label{sec consequences}

In this section, we obtain some corollaries of our main theorems, and we discuss
how our results subsume previously known tractability results in integer linear optimization,
binary polynomial optimization, and Boolean satisfiability.



\subsection{Main consequences}
\label{sec consequences projection-width}

In \cref{sec INLP}, we specialize some of our main results to integer separable (nonlinear) optimization; in \cref{sec ILP}, to integer linear optimization; in \cref{sec BPO}, to binary polynomial optimization; and in \cref{sec SAT}, to Boolean satisfiability.
We focus in particular on the optimization problems, emphasizing the consequences of \cref{th optimization}.
All the results that we obtain are new, except for Boolean satisfiability, where we recover precisely the tractability of weighted MaxSAT and \#SAT for CNF formulas with bounded PS-width in~\cite{SaeTelVat14}.




\subsubsection{Integer separable optimization}
\label{sec INLP}

The integer separable optimization problem is the special case of the optimization problem considered in this work (\cref{sec opt}), where the domain consists of the integer points in a bounded interval, and only inequality constraints are allowed.
Formally, a \emph{separable inequality system} is a quadruple $\pare{\vars,\domI,\consSle,\consSge}$, where $\vars$ is a finite set of \emph{variables}, $\domI$ is a finite \emph{domain} set of the form $\domI = \bra{-\dom_{\max},-\dom_{\max}+1,\dots,\dom_{\max}}$ for some $\dom_{\max} \in \Z_{\ge 0}$, and where $\consSle$, $\consSge$ are sets of \emph{separable inequality constraints} of the form
\begin{align*}
& \sum_{\var \in \vars} f^\con_\var(\var) \le \delta^\con && \con \in \consSle, \\
& \sum_{\var \in \vars} f^\con_\var(\var) \ge \delta^\con && \con \in \consSge,
\end{align*}
where $f^\con_\var : \domI \to \Z$ for every $\var \in \vars$ and $\con \in \consSle \cup \consSge$, 
and where $\delta^\con \in \Z$ for every $\con \in \consSle \cup \consSge$.
Clearly, a constraint in $\consSge$ can also be expressed as a constraint in $\consSle$, and vice versa.
However, we choose to keep both types of constraints, as moving one inequality from one set to the other may affect the resulting projection-width of the system.

The definition of projection-width of a separable inequality system follows from our original definition for separable systems in \cref{sec def projection-width}, since every inequality constraint $\con \in \consSle$ can be written as a set constraint in $\consin$ with $\Gamma^\con = \bra{0,1,\dots,\gamma^\con}$, where as usual
\begin{align*}
\gamma^\con & := \delta^\con - \sum_{\var \in \vars} \min\bra{f^\con_\var(d) \mid d \in \domI}.
\end{align*}

In the \emph{integer separable optimization problem,} we are given a separable inequality system $\pare{\vars,\domI,\consSle,\consSge}$ and $\nu_\var : \domI \to \R$ for every $\var \in \vars$.
For every assignment $\ass : \vars \to \dom$, we define its \emph{value}
\begin{align*}
\val(\ass) & := \sum_{\var \in \vars} \nu_\var \pare{\ass(\var)}.
\end{align*}
The goal is to find a highest-value assignment from $\vars$ to $\domI$ satisfying the constraints in $\consSle \cup \consSge$, or prove that no such assignment exists.

Since $\card{\domI} = 2 \dom_{\max} + 1$, our \cref{th optimization} directly implies the following result:

\begin{corollary}[Integer separable optimization]
\label{cor inl optimization}
Consider a separable inequality system $\pare{\vars,\domI,\consSle,\consSge}$, let $\consS := \consSle \cup \consSge$, and let $T$ be a branch decomposition of $\vars \cup \consS$ of projection-width $\pwidth$.
Let $\nu_\var : \dom \to \R$ for every $\var \in \vars$.
Then, the integer separable optimization problem can be solved in time
\begin{equation}
\label{eq inl opt runtime}
O\pare{
\pwidth^3 \pare{\card{\vars}+\card{\consS}} \card{\consS}
+
\card{\vars} \card{\consS} \dom_{\max} \log\pare{\dom_{\max}}}.
\end{equation}
\end{corollary}

\subsubsection{Integer linear optimization}
\label{sec ILP}

The integer linear optimization problem is the special case of integer separable optimization considered in \cref{sec INLP}, where all functions are linear.

Formally, a \emph{linear inequality system} is a separable
inequality system $\pare{\vars,\domI,\consLle,\consLge}$, where for every $\con \in \consLle \cup \consLge$ and $\var \in \vars$, the function $f^\con_\var : \domI \to \Z$ is of the form 
\begin{align*}
& f^\con_\var(\var) = a^\con_\var \var,
\end{align*}
for some $a^\con_\var \in \Z$.
In the \emph{integer linear optimization problem,} we are given a linear inequality system $\pare{\vars,\domI,\consLle,\consLge}$ and $\nu_\var \in \R$ for every $\var \in \vars$.
For every assignment $\ass : \vars \to \domI$, we define its \emph{value}
\begin{align*}
\val(\ass) & := \sum_{\var \in \vars} \nu_\var \cdot \pare{\ass(\var)}.
\end{align*}
The goal is to find a highest-value assignment from $\vars$ to $\domI$ satisfying the constraints in $\consLle \cup \consLge$, or prove that no such assignment exists.
\cref{cor inl optimization} directly implies the following result:

\begin{corollary}[Integer linear optimization]
\label{cor il optimization}
Consider a linear inequality system $\pare{\vars,\domI,\consLle,\consLge}$, let $\consL := \consLle \cup \consLge$, and let $T$ be a branch decomposition of $\vars \cup \consL$ of projection-width $\pwidth$.
Let $\nu_\var \in \R$ for every $\var \in \vars$.
Then, the integer linear optimization problem can be solved in time
\begin{equation}
\label{eq il opt runtime}
O\pare{
\pwidth^3 \pare{\card{\vars}+\card{\consL}} \card{\consL}
+
\card{\vars} \card{\consL} \dom_{\max} \log\pare{\dom_{\max}}}.
\end{equation}
\end{corollary}

\subsubsection{Binary polynomial optimization}
\label{sec BPO}

Important applications of our results arise in \emph{binary polynomial optimization}, an area that has recently seen significant progress; see, for example,  \cite{dPKha17MOR,dPKha18MPA,dPKha18SIOPT,dPKha21MOR,dPKha24MPA}.
In a \emph{binary polynomial optimization problem}, we are given a hypergraph $H=(V,E)$, together with $\val_v \in \R$ for every $v \in V$, and $\val_e \in \R$ for every $e \in E$.
Let $\vars_V := \bra{\var_v \mid v \in V}$ denote the set of variables.
The goal is to find an assignment $\ass : \vars_V \to \bra{0,1}$ maximizing
\begin{align*}
\sum_{v \in V} \val_v \ass(\var_v) + \sum_{e \in E} \val_e \prod_{v \in e} \ass(\var_v).
\end{align*}
The objective function above is, in general, not separable.
However, it is well-known how the binary polynomial optimization problem can be reformulated as an integer linear optimization problem.
To do so, we apply Fortet’s linearization \cite{For59,For60}, introducing auxiliary variables $\yvars_E = \bra{\yvar_e \mid e \in E}$.
For every assignment $\ass : \vars_V \cup \yvars_E \to \bra{0,1}$, we define its \emph{value} by the linear function
\begin{align*}
\val(\ass) := \sum_{v \in V} \val_v \ass(\var_v) + \sum_{e \in E} \val_e \ass(\yvar_e).
\end{align*}
Consistency between the auxiliary variables and the original ones is then enforced through linear inequalities.
Among several possible formulations, we adopt the \emph{standard linearization,} though exploring alternative formulations could be an interesting direction in light of the results of this paper.

As we discussed in \cref{sec INLP}, each linear inequality can be placed in either $\consLle$ or $\consLge$, and this choice may affect the resulting projection-width.
Here, we place all inequalities in $\consLge$.
This yields the linear inequality system $S_{\BPO} = \pare{\vars_V \cup \yvars_E,\bra{0,1},\emptyset,\consBPO}$, where the constraints in $\consBPO$ are given by
\begin{equation*}
\begin{aligned}
& (1-\yvar_e) + \var_v \ge 1 && \forall v \in e, \ \forall e \in E, \\
& \sum_{v \in e} (1-\var_v) + \yvar_e \ge 1 && \forall e \in E. 
\end{aligned}
\end{equation*}
The binary polynomial optimization problem is thus equivalent to the integer linear optimization problem of finding a maximum-value assignment from $\vars_V \cup \yvars_E$ to $\bra{0,1}$ satisfying the constraints in $\consBPO$.
We can then directly apply \cref{cor il optimization} to obtain the following tractability results for binary polynomial optimization, where we denote by $\size(H) := \sum_{e \in E} \card{e}$ the size of the hypergraph $H$.

\begin{corollary}[Binary polynomial optimization]
\label{cor BPO}
Consider a hypergraph $H=(V,E)$ and let $T$ be a branch decomposition of $\vars_V \cup \yvars_E \cup \consBPO$ of projection-width $\pwidth$.
Let $\val_v \in \R$ for every $v \in V$, and $\val_e \in \R$ for every $e \in E$.
Then, the binary polynomial optimization problem can be solved in time 
\begin{align*}
O\pare{\pwidth^3 \pare{\card{V} + \size(H)} \size(H)}.
\end{align*}
\end{corollary}



\cref{th optimization} also allows us to solve classes of \emph{constrained binary polynomial optimization,} which is the extension of binary polynomial optimization obtained by considering only assignments satisfying some given additional separable constraints.
\footnote{If the additional separable constraints are linear, it suffices to use \cref{cor il optimization} instead of \cref{th optimization}.}

Formally, let $\pare{\vars_V \cup \yvars_E,\bra{0,1},\consinc,\consgec}$ be a separable system representing these constraints.
We then define the \emph{constrained binary polynomial optimization problem} as the problem of finding a maximum-value assignment from $\vars_V \cup \yvars_E$ to $\bra{0,1}$ satisfying the constraints in $\consinc \cup \consgec \cup \consBPO$.
At this point, we can again directly apply \cref{th optimization} and obtain the following result.

\begin{corollary}[Constrained binary polynomial optimization]
\label{cor constrained BPO}
Consider a hypergraph $H=(V,E)$, a separable system $\pare{\vars_V \cup \yvars_E,\bra{0,1},\consinc,\consgec}$, and let $T$ be a branch decomposition of $\vars_V \cup \yvars_E \cup \consinc \cup \consgec \cup \consBPO$ of projection-width $\pwidth$.
Then, the constrained binary polynomial optimization problem can be solved in time
\begin{align*}
O\pare{\pwidth^3 \pare{\card{V} + \size(H)} \size(H)}.
\end{align*}
\end{corollary}

We remark that literals can be naturally incorporated into our framework, in both the unconstrained and constrained settings.
The only modification required is that, in the constraints in $\consBPO$, some variables $\var_v$ may be replaced by $1-\var_v$ and vice versa. 
Consequently, all results in this section remain valid in this more general setting.
We have excluded literals purely for notational simplicity.
For additional background on binary polynomial optimization with literals, also known as \emph{pseudo-Boolean optimization,} we refer the reader to \cite{BorHam02,CapdPDiG24xx,dPKha24xx,dPKha25MPA}.

Note that most known tractability results for binary polynomial optimization rely on structural properties of the underlying hypergraph $H$.
In contrast, \cref{cor BPO,cor constrained BPO} are of a different nature, as they are based instead on properties of the constraint system.
In \cref{sec BPO itw}, we present an example illustrating how these these results can be leveraged to derive tractability conditions expressed in terms of the hypergraph, thereby bringing them closer in spirit to existing results in the literature.

\subsubsection{Boolean satisfiability}
\label{sec SAT}

In this section, we show that our main results subsume those of \cite{SaeTelVat14} on the tractability of weighted MaxSAT and \#SAT for CNF formulas with bounded PS-width.
This influential result accounts for nearly all known tractable cases of these SAT problems, including formulas with 
bounded primal treewidth, 
bounded incidence treewidth, 
bounded signed incidence clique-width, 
bounded incidence clique-width, 
bounded MIM-width, 
as well as $\gamma$-acyclic formulas and disjoint-branches formulas~\cite{SaeTelVat14,Cap16PhD}.

We begin by defining PS-width, where “PS” stands for \emph{precisely satisfiable}.
Our notation closely parallels that used for projection-width, highlighting the structural similarity between the two concepts.
A \emph{CNF formula} is a pair $(\vars,F)$, where $\vars$ is a set of variables and $F$ is a set of clauses.
Recall that a \emph{clause} is the disjunction of literals, that is, variables or negation of variables of $\vars$.
Given $\vars' \subseteq \vars$, $F' \subseteq F$, and $\ass : \vars' \to \bra{0,1}$, we define 
$F' / \ass$, as the set of clauses in $F'$ that are satisfied by $\ass$.
Given $\vars' \subseteq \vars$ and $F' \subseteq F$, we denote by 
\begin{align*}
\proj(F', \vars')=\{F' / \ass \mid \ass: \vars' \rightarrow\bra{0,1}\}.
\end{align*}
Given a branch decomposition $T$ of $\vars \cup F$ and a vertex $v$ of $T$, we denote by $T_v$ the subtree of $T$ rooted in $v$, by $F_v$ the set clauses of $F$ such that the corresponding vertex appears in the leaves of $T_v$ and by $\vars_v$ the set of variables of $F$ that similarly appear in the leaves of $T_v$. 
We denote by 
\begin{align*}
\proj(v) &:= \proj\pare{F \setminus F_v, \vars_v}, \\
\proj(\overline v) &:= \proj\pare{F_v, \vars \setminus \vars_v}.
\end{align*}
If we denote by $V(T)$ the set of vertices of $T$, the \emph{PS-width} of the formula $\pare{\vars,F}$ and $T$ is defined by
\begin{align*}
\max _{v \in V(T)} \max \pare{\card{\proj(v)},\card{\proj(\overline v)}}.
\end{align*}
The \emph{PS-width} of a formula $\pare{\vars,F}$ is then defined as the minimum among the PS-widths of the formula $\pare{\vars,F}$ and $T$, over all branch decompositions $T$ of $\vars \cup F$.

\begin{observation}
\label{obs PS to PSV}
Let $\pare{\vars,F}$ be a CNF formula.
Then, there exists a separable system $\pare{\vars,\dom,\emptyset,\consF}$ with $\card{\consF} = \card{F}$, and $\dom=\bra{0,1}$ such that an assignments $\ass : \vars \to \bra{0,1}$ satisfies the constraints in $\consF$ if and only if it satisfies the clauses in $F$.
Moreover, a branch decomposition of $\vars \cup F$ of PS-width $\pswidth$ yields a branch decomposition of $\vars \cup \consF$ of projection-width $\pswidth$.
Furthermore, we have $g^\con_\var(\var) \in \bra{0, \var, 1-\var}$ for every $\var \in \vars$, $\con \in \consF$, and $\gamma^\con=1$ for every $\con \in \consF$.
\end{observation}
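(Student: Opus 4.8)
The plan is to encode each clause of $F$ as a single $\ge$-constraint, and then to verify by a direct comparison of the two projection operators that the resulting branch decomposition has projection-width equal to the PS-width we started from; the content of the proof is essentially this comparison.

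First I would define $\consF$ clause by clause. For each clause $\con \in F$ I put a constraint, which I keep calling $\con$, into $\consge$, and for every $\var \in \vars$ I set $f^\con_\var$ to be the identity $d \mapsto d$ if $\var$ occurs positively in $\con$, the map $d \mapsto 1-d$ if $\var$ occurs negatively in $\con$, and the constant $0$ otherwise; I also set $\delta^\con := 1$. Then $\consin = \emptyset$, $\dom = \bra{0,1}$, and $\card{\consF} = \card{F}$. Each $f^\con_\var$ takes values in $\bra{0,1}$ and attains $0$ on $\dom$, so the translation step of \cref{sec def projection-width} leaves it unchanged: $g^\con_\var = f^\con_\var$, which is one of $0$, $\var$, $1-\var$, and $\gamma^\con = \delta^\con - 0 = 1$. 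This already yields the last sentence of the statement. For the equivalence I would observe that $\sum_{\var \in \vars} g^\con_\var(\ass(\var))$ is exactly the number of literals of $\con$ made true by $\ass$, so $\sum_{\var \in \vars} g^\con_\var(\var) \ge 1$ holds iff $\con$ is satisfied; since there are no $\in$-constraints, $\ass$ satisfies $\consF$ iff it satisfies $F$.

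Next I would transport a branch decomposition $T$ of $\vars \cup F$ to a branch decomposition $T'$ of $\vars \cup \consF$ by relabelling each clause-leaf with the corresponding constraint, keeping the underlying tree (so $V(T') = V(T)$); then for every vertex $v$ the set $\vars_v$ is unchanged and $\cons_v$, $\overline{\cons_v}$ correspond to $F_v$, $F \setminus F_v$. The heart of the argument is the identity, for every vertex $v$, every $\ass : \vars_v \to \dom$, and every clause $\con \in F \setminus F_v$,
\[
\pare{\overline{\cons_v} / \ass}^\con = \min\bra{\cb^\con\pare{\ass}, \gamma^\con} = \min\bra{\cb^\con\pare{\ass}, 1},
\]
which is $1$ exactly when $\ass$ makes some literal of $\con$ true, i.e.\ when $\con \in (F \setminus F_v)/\ass$, and $0$ otherwise. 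So the $\bra{0,1}$-valued map $\overline{\cons_v}/\ass$ is nothing but the indicator of the clause set $(F \setminus F_v)/\ass$; hence $\ass \mapsto \overline{\cons_v}/\ass$ and $\ass \mapsto (F \setminus F_v)/\ass$ induce the same partition of the set of assignments $\ass : \vars_v \to \dom$, and therefore $\card{\proj(v)}$ is the same whether computed for the separable system with $T'$ or for the formula with $T$. Running the symmetric computation with $\ass : \overline{\vars_v} \to \dom$ and clauses in $F_v$ gives the same for $\card{\proj(\overline v)}$. Taking $\max_{v \in V(T)}\max\pare{\card{\proj(v)}, \card{\proj(\overline v)}}$ then shows that the projection-width of $T'$ equals the PS-width of $T$, namely $\pswidth$.

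I expect the only delicate points to be bookkeeping ones: checking that the translation of \cref{sec def projection-width} genuinely leaves the chosen $f^\con_\var$ alone (so the clean forms $g^\con_\var \in \bra{0,\var,1-\var}$ and $\gamma^\con = 1$ survive), and making sure that ``$\ass$ satisfies the clause $\con$'' for a partial assignment $\ass$ on $\vars_v$ is read as ``some literal of $\con$ over a variable of $\vars_v$ is true under $\ass$'' — which is precisely what makes the thresholded constraint bound $\min\bra{\cb^\con\pare{\ass},1}$ agree with clause membership in the CNF projection operator. Beyond reconciling these two notations I do not see a genuine obstacle.
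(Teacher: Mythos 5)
Your proposal is correct and follows essentially the same route as the paper: encode each clause as the $\ge$-constraint $\sum_{\var\in\vars^+}\var+\sum_{\var\in\vars^-}(1-\var)\ge 1$, note the translation is trivial so $g^\con_\var\in\bra{0,\var,1-\var}$ and $\gamma^\con=1$, and observe that $\overline{\cons_v}/\ass$ is exactly the indicator of the clause set $(F\setminus F_v)/\ass$, which puts the two projection sets in bijection and preserves the width. The paper's proof is just a terser version of this same comparison.
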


\begin{prf}
For every clause in $F$, we write a constraint in $\consF$ over variables $\vars$ with domain $\dom = \bra{0,1}$ defined by
\begin{align*}
\sum_{\var \in \vars^+} \var + \sum_{\var \in \vars^-} (1-\var) \ge 1,
\end{align*}
where $\vars^+$ denotes the set of variables that appear in the positive literals in the clause, and $\vars^-$ denotes the set of variables that appear in the negative literals in the clause.
Clearly, an assignment $\ass : \vars \to \bra{0,1}$ satisfies the clause if and only if it satisfies the obtained constraint.
It then suffices to show that a branch decomposition of $\vars \cup F$ of PS-width $\pswidth$ yields a branch decomposition of $\vars \cup \consF$ of projection-width $\pswidth$.
To see this, let $\vars' \subseteq \vars$, $F' \subseteq F$, $\ass: \vars' \rightarrow\bra{0,1}$, and let $\cons'$ be the set of constraints in $\consF$ originating from the clauses in $F'$.
Observe that $\pare{\cons' / \ass}^\con \in \bra{0,1}$ for every $\con \in \consF$.
Furthermore, a clause $f \in F'$ is in the set $F' / \ass$ if and only if $\pare{\cons' / \ass}^\con = 1$, where $\con$ is the constraint in $\cons'$ corresponding to $f$.
\end{prf}

Thanks to \cref{obs PS to PSV}, in the special case where $\consin = \emptyset$, $\dom = \bra{0,1}$, $g^\con_\var(\var) \in \bra{0, \var, 1-\var}$ for every $\var \in \vars$, $\con \in \consF$, and $\gamma^\con=1$ for every $\con \in \consF$, \cref{th counting} recovers the tractability of \#SAT on CNF formulas with bounded PS-width as established in~\cite{SaeTelVat14}.
Similarly, \cref{th weighted constraint violation} subsumes the tractability of weighted MaxSAT for CNF formulas with bounded PS-width in the same work.
The running time we obtain for these two problems is comparable to that in~\cite{SaeTelVat14}, and is given by
\begin{equation*}
O\pare{\pswidth^3 \pare{\card{\vars}+\card{F}} \card{F}},
\end{equation*}
where $\pswidth$ denotes the PS-width of the given branch decomposition.

\subsection{Consequences for incidence treewidth}
\label{sec consequences incidence treewidth}

In this section, we show that our main results imply the tractability of optimization and counting problems over separable systems whose incidence graph has bounded treewidth.
Consider a separable system $S = \pare{\vars,\dom,\consin,\consge}$, and let $\cons := \consin \cup \consge$.
The \emph{incidence graph} of $S$, denoted $G_{\inc}\pare{S}$, is the bipartite graph with vertex bipartition $\vars \cup \cons$, where an edge connects $\var \in \vars$ and $\con \in \cons$ if and only if $g^\con_\var \not\equiv 0$.

In the next result, we show that, given a tree decomposition of $G_{\inc}\pare{S}$ with treewidth bounded by a constant, one can efficiently construct a branch decomposition of $\vars \cup \cons$ whose projection-width is polynomially bounded.

\begin{lemma}[Treewidth and projection-width]
\label{lem itw PSw}
Consider a separable system $S = \pare{\vars,\dom,\consin,\consge}$, let $\cons := \consin \cup \consge$, let $\gamma := \max\bra{\gamma^\con \mid \con \in \cons}$, and let $T$ be a tree decomposition of $G_{\inc}\pare{S}$ of treewidth $\itwidth$.
Then, in time $O(\card{\vars} + \card{\cons})$, we can construct a branch decomposition $T'$ of $\vars \cup \cons$ of projection-width at most 
\begin{align*}
\max\bra{\card{\dom},\gamma+1}^{\itwidth+1}.
\end{align*}
\end{lemma}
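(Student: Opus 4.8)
The plan is to follow the standard recipe for turning a tree decomposition of bounded width into a branch decomposition of bounded ``MIM-width–type'' parameter, as used for PS-width in~\cite{SaeTelVat14}, and to adapt the projection-count bound to the separable setting. First I would recall that a tree decomposition of $G_{\inc}(S)$ of width $\itwidth$ can be converted, in linear time, into a \emph{nice} tree decomposition in which each bag has size at most $\itwidth+1$, and then I would use the well-known construction that produces from it a branch decomposition $T'$ of the leaf set $\vars\cup\cons$: contract the tree decomposition so that every element of $\vars\cup\cons$ is introduced at exactly one node, attach that element as a leaf there, and then binarize the resulting tree. The key property obtained is that for every vertex $v$ of $T'$ there is an associated bag, of size at most $\itwidth+1$, that \emph{separates} $\vars_v\cup\cons_v$ from $\overline{\vars_v}\cup\overline{\cons_v}$ in $G_{\inc}(S)$; equivalently, every edge of $G_{\inc}(S)$ with one endpoint in $\vars_v$ and the other in $\overline{\cons_v}$ (or in $\cons_v$ and $\overline{\vars_v}$) must pass through a vertex of that bag. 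I would state this as a short claim, citing the standard reference, since the construction itself is routine.

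Next I would bound $\card{\proj(v)}=\card{\proj(\overline{\cons_v},\vars_v)}$ for a fixed vertex $v$ with separating bag $B$, $\card{B}\le \itwidth+1$. The point is that the map $\overline{\cons_v}/\ass$ depends on $\ass:\vars_v\to\dom$ only through the constraints $\con\in\overline{\cons_v}$ with $g^\con_{\var}\not\equiv 0$ for some $\var\in\vars_v$; but any such $\con$ is adjacent in $G_{\inc}(S)$ to some $\var\in\vars_v$, and since $\con\in\overline{\cons_v}$ lies on the ``other side'' while $\var\in\vars_v$ lies on this side, the separator $B$ forces $\con\in B$. Hence only constraints in $B\cap\overline{\cons_v}$ can have their value affected, so at most $\card{B}\le\itwidth+1$ coordinates of $\overline{\cons_v}/\ass$ are non-constant as $\ass$ varies. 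Each such coordinate takes a value in $\{0,1,\dots,\gamma^\con\}$, giving at most $(\gamma+1)^{\itwidth}$ possibilities from the constraint side. Symmetrically, $\overline{\cons_v}/\ass$ also depends on $\ass$ only through $\ass|_{\vars_v\cap B}$ (variables outside $B$ are not adjacent to any constraint in $\overline{\cons_v}$, since such an edge would cross the separator without meeting $B$), giving at most $\card{\dom}^{\itwidth}$ possibilities from the variable side; and of course $\card{\proj(v)}\le\card{\dom}$ trivially when $\card{\vars_v}$ is small, and $\le\gamma+1$ considerations apply in degenerate cases. Taking the better of the two counting arguments yields $\card{\proj(v)}\le\min\{\card{\dom},\gamma+1\}^{\itwidth}$, and combining with the trivial bounds $\card{\dom}$ and $\gamma+1$ (needed to cover the edge cases where $B$ contains the single introduced leaf) gives the claimed maximum. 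The identical argument applied to $\proj(\overline v)=\proj(\cons_v,\overline{\vars_v})$ handles the other projection.

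The main obstacle I anticipate is getting the exponent exactly right — i.e.\ $\itwidth$ rather than $\itwidth+1$ — and correctly accounting for the leaf that is introduced at a given node, which is why the stated bound includes the extra terms $\card{\dom}$ and $\gamma+1$ as a separate max. The careful point is that at a leaf of $T'$ corresponding to a variable $\var$, one has $\vars_v=\{\var\}$ and $\card{\proj(v)}\le\card{\dom}$ directly; at a leaf corresponding to a constraint $\con$, $\card{\proj(v)}=1$; and at internal nodes the separating bag has size $\le\itwidth+1$ but one of its elements is ``used up'' by the introduced leaf in the appropriate orientation, so only $\itwidth$ elements contribute genuine variability to a given projection — this is exactly the bookkeeping that distinguishes incidence treewidth $\itwidth$ from $\itwidth+1$ in~\cite{SaeTelVat14}. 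Once that accounting is pinned down, the running-time claim $O(\card{\vars}+\card{\cons})$ is immediate, since the nice-tree-decomposition conversion and the subsequent binarization are both linear, and we do not need to examine the functions $g^\con_\var$ themselves, only the edge set of $G_{\inc}(S)$ which is implicit in the input.
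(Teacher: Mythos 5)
Your overall strategy coincides with the paper's: hang each element of $\vars\cup\cons$ as a leaf at a suitable node of the tree decomposition, binarize, and then argue that for each cut only the variables and constraints lying in a single bag can influence the projection, each contributing a factor of $\card{\dom}$ or $\gamma+1$. Up to that point the argument is sound.

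The genuine gap is in the step you yourself flag as the delicate one: why the exponent is $\itwidth$ rather than $\itwidth+1$. Your proposed mechanism --- that ``one of the bag's elements is used up by the leaf introduced at that node'' --- does not work. First, the element introduced at the node $t$ associated with an inner vertex $v$ lies in $\vars_v\cup\cons_v$, so it can only excuse one slot for \emph{one} of the two counts: if it is a variable $\var'\in\vars_v$, it is disjoint from $\overline{\cons_v}\cap B$ and hence caps the number of relevant constraints at $\itwidth$, but it may itself be one of the relevant variables of $\vars_v\cap B$, so that count is only bounded by $\itwidth+1$. This yields $\card{\proj(v)}\le\min\bra{\card{\dom}^{\itwidth+1},(\gamma+1)^{\itwidth}}$, which is strictly weaker than the claimed $\min\bra{\card{\dom},\gamma+1}^{\itwidth}$ when $\card{\dom}<\gamma+1$. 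Second, after binarization and subdivision many inner vertices have no introduced element at all, so the argument gives nothing there. The correct accounting, which is what the paper uses, does not involve the introduced leaf: if there is at least one relevant variable $\var\in\vars_v$ (one with $g^\con_\var\not\equiv 0$ for some $\con\in\overline{\cons_v}$), then the common-bag property forces \emph{both} $\var$ and such a $\con$ into the bag of $t$. Hence that bag of size at most $\itwidth+1$ simultaneously contains all relevant variables of $\vars_v$ and at least one constraint of $\overline{\cons_v}$, so the relevant variables number at most $\itwidth$; symmetrically it contains all relevant constraints of $\overline{\cons_v}$ and at least one variable of $\vars_v$, so those also number at most $\itwidth$. (If there are no relevant variables, $\card{\proj(v)}=1$ and there is nothing to prove.) With this substitution, and the trivial bounds $\card{\dom}$ and $\gamma+1$ at the two kinds of leaves that you already have, the proof goes through. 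A smaller presentational point: ``the separator forces $\con\in B$'' should be stated as ``$\var$ and $\con$ must share a bag, and by the construction the only bag available to both is the bag of $t$, so both lie in it'' --- this is exactly the fact needed for the corrected count above.
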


\begin{prf}
Without loss of generality, we can assume that $T$ is binary.
In fact, for any vertex with more than two children, we can replace it with a binary tree of new vertices, each associated with the same bag, i.e., subset of $\vars \cup \cons$.
This transformation does not increase the size of any bag and therefore does not increase the treewidth.
We can then construct a branch decomposition $T'$ of $\vars \cup \cons$ as follows.
First, we add a vertex $r$ as the father of the root of $T$.
Then, for every $v \in \vars \cup \cons$, let $t$ be the vertex of $T$ closest to the root of $T$ such that $v$ appears in the bag of $t$, and we hang a leaf with label $v$ on the edge between $t$ and its father.
We then remove the leaves that have no label.
The resulting tree $T'$ is binary and for every $v \in \vars \cup \cons$, there exists precisely one leaf of $T'$ with label $v$.
Therefore, $T'$ is a branch decomposition of $\vars \cup \cons$.
We claim that the projection-width of $S$ and $T'$ is at most the one in the statement.

First, let $l$ be a leaf of $T'$ corresponding to a constraint $\con$ in $\cons$.
Since $\vars_l = \emptyset$, we have $\card{\proj(l)} = 1$.
Moreover, $\cons_l = \bra{\con}$ implies $\card{\proj(\overline{l})} \le \gamma^c+1$.
Next, let $l$ be a leaf of $T'$ corresponding to a variable $\var$ in $\vars$.
Since $\vars_l = \bra{\var}$, we have $\card{\proj(l)} \le \card{\dom}$.
Moreover, $\cons_l = \emptyset$ implies $\card{\proj(\overline{l})}=1$.

Next, let $v$ be an inner vertex of $T'$.
If $v$ is also a vertex of $T$, let $t := v$.
Otherwise, let $t$ be the vertex of $T$ in $T'_v$ closest to $v$.
We observe that, by construction, the label of every leaf of $T'_v$ appears only in bags of $T_t$.
Moreover, the label of every leaf of $T' \setminus T'_v$ either only appears in bags of $T \setminus T_t$, or it appears in the bag of $t$.

Let $\var \in \vars_v$ and $\con \in \overline{\cons_v}$ such that $g^\con_\var \not\equiv 0$.
By the previous observation, $\var$ appears only in bags of $T_t$ and $\con$ only appears in bags of $T \setminus T_t$ or in the bag of $t$.
Since $g^\con_\var \not\equiv 0$, $\var$ and $\con$ must appear in a common bag, so $\con$ must appear in the bag of $t$.
Therefore, the constraints $\con \in \overline{\cons_v}$ with $g^\con_\var \not\equiv 0$ for some $\var \in \vars_v$ all appear in the bag corresponding to $t$, so they are at most $\itwidth+1$.
Hence, $\card{\proj(v)} \le \pare{\gamma+1}^{\itwidth+1}$.

Let $\con \in \cons_v$ and $\var \in \overline{\vars_v}$ such that $g^\con_\var \not\equiv 0$.
By the previous observation, $\con$ appears only in bags of $T_t$ and $\var$ only appears in bags of $T \setminus T_t$ or in the bag of $t$.
Since $g^\con_\var \not\equiv 0$, $\var$ and $\con$ must appear in a common bag, so $\var$ must appear in the bag of $t$.
Therefore, the variables $\var \in \overline{\vars_v}$ with $g^\con_\var \not\equiv 0$ for some $\con \in \cons_v$ all appear in the bag corresponding to $t$, so they are at most $\itwidth+1$.
Hence, $\card{\proj(v)} \le \card{\dom}^{\itwidth+1}$.

Thus the projection-width of $T'$ is at most $\max\bra{\card{\dom},\gamma+1}^{\itwidth+1}$.
\end{prf}

\cref{lem itw PSw}, together with our main results \cref{th counting,th top-k,th optimization,th weighted constraint violation}, implies the tractability of problems defined over separable systems $S$ whose incidence graph $G_{\inc}\pare{S}$ has bounded treewidth.
In particular, from \cref{th optimization,lem itw PSw} we obtain the following result.

\begin{corollary}[Optimization -- incidence treewidth]
\label{cor optimization itw}
Consider a separable system $S = \pare{\vars,\dom,\consin,\consge}$, let $\cons := \consin \cup \consge$, let $\gamma := \max\bra{\gamma^\con \mid \con \in \cons}$, and let $T$ be a tree decomposition of $G_{\inc}\pare{S}$ of treewidth $\itwidth$.
Let $\nu_\var : \dom \to \R$ for every $\var \in \vars$.
Then, the optimization problem can be solved in time \eqref{eq opt runtime}, where $\pwidth$ is replaced by $\max\bra{\card{\dom},\gamma+1}^{\itwidth+1}.$
\end{corollary}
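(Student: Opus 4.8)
The plan is to obtain the corollary by directly composing the two results already established, \cref{lem itw PSw} and \cref{th optimization}, with essentially no new argument required.

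First I would apply \cref{lem itw PSw} to the given tree decomposition $T$ of $G_{\inc}(S)$ of treewidth $\itwidth$. This produces, in time $O(\card{\vars}+\card{\cons})$, a branch decomposition $T'$ of $\vars \cup \cons$ whose projection-width is at most
\[
B:=\max\bra{\card{\dom},\gamma+1,\min\bra{\card{\dom},\gamma+1}^\itwidth}.
\]

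Next I would run the optimization algorithm of \cref{th optimization} on the separable system $S$, the value functions $\nu_\var$, and the branch decomposition $T'$. Two small observations make this legitimate: \cref{th optimization} is stated for an \emph{arbitrary} branch decomposition of $\vars \cup \cons$, so it applies verbatim to $T'$; and the running time \eqref{eq opt runtime} is monotone nondecreasing in the projection-width parameter, so it remains a valid bound when $\pwidth$ is replaced by any upper bound on the projection-width of $T'$, in particular by $B$. This solves the optimization problem in the time \eqref{eq opt runtime} with $\pwidth$ replaced by $B$. Finally I would note that the $O(\card{\vars}+\card{\cons})$ preprocessing cost of the first step is dominated by this expression (for instance by the term $\card{\vars}\,\card{\cons}\,\card{\dom}\log(\card{\dom})$, or by the $B^3(\card{\vars}+\card{\cons})\card{\cons}$ term), so it does not affect the stated total running time.

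I do not expect any genuine obstacle: the corollary is a routine consequence of plugging one lemma into one theorem. The only point worth stating carefully is the reduction carried out in the first step — that a tree decomposition of the incidence graph can be converted into a branch decomposition of $\vars \cup \cons$ of controlled projection-width — but that is precisely the content of \cref{lem itw PSw}, which we are entitled to assume here.
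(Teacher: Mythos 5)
Your proposal is correct and matches the paper's own (implicit) argument exactly: the paper derives \cref{cor optimization itw} precisely by feeding the branch decomposition produced by \cref{lem itw PSw} into \cref{th optimization}, using monotonicity of the running-time bound in $\pwidth$. Your additional remarks about the negligible preprocessing cost and the validity of substituting an upper bound for $\pwidth$ are the right (minor) points to check, and they hold.
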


It is natural to ask whether one could design an algorithm with a running time similar to that in \cref{cor optimization itw}, but where $\gamma$ is replaced by $\log(\gamma)$.
However, this is unlikely, as it would yield a polynomial-time algorithm for the subset sum problem, or for the $0,1$ knapsack problem, both of which are well-known to be weakly NP-complete. 
Indeed, such problems can be naturally expressed as optimization problems with $\dom = \bra{0,1}$ and $\itwidth=1$, since they involve only a single constraint.

It is worth emphasizing that the bounded incidence treewidth setting in \cref{cor optimization itw} is substantially more restrictive than our main bounded projection-width setting in \cref{th optimization}.
Even within the more limited SAT framework discussed in \cref{sec SAT}, several important classes of formulas admit PS-width polynomially bounded, but incidence treewidth not bounded by a constant, including those with 
bounded signed incidence clique width, 
bounded incidence clique-width, 
bounded MIM-width, 
$\gamma$-acyclic formulas, and 
formulas with disjoint branches
\cite{SaeTelVat14,Cap16PhD}.
In \cref{obs family} below, we present a family of linear inequality systems whose incidence treewidth is not bounded by a constant, while the projection-width remains polynomially bounded.
For this family, \cref{th optimization} yields a polynomial-time algorithm, while \cref{cor optimization itw} does not.
To the best of our knowledge, also \cref{cor optimization itw} is new.
The main reason we wrote \cref{cor optimization itw} is that it allows us to illustrate how our results subsume known tractability results in integer linear optimization (see \cref{sec ILP incidence}) and binary polynomial optimization (see \cref{sec BPO itw}).

\begin{observation}
\label{obs family}
There exists a family of linear inequality systems $S_L$ with $2n$ variables such that the treewidth of $G_{\inc}\pare{S_L}$ is at least $n$, and the projection-width of $S_L$ is at most $2n$.
\end{observation}

\begin{prf}
Consider the linear inequality system $S_L = \pare{\vars,\domI,\emptyset,\consLge}$, with variables $\vars=\bra{\var_1,\var_2,\dots,\var_{2n}}$, domain $\domI = \bra{0,1}$, and linear inequality constraints $\consLge = \bra{\con_1,\con_2,\dots,\con_{2n}}$, where for $k \in \bra{1,2,\dots,2n}$, $\con_{k}$ is given by
\begin{align*}
\sum_{i=1}^k \var_i \ge \gamma^{\con_k},
\end{align*}
where $\gamma^{\con_k} \in \bra{0,1,\dots,2n}$.
The subgraph of $G_{\inc}\pare{S_L}$ induced by variables $\var_{1},\var_{2}, \dots,\var_{n}$ and constraints $\con_{n+1},\con_{n+2}, \dots,\con_{2n}$ is complete bipartite, thus the treewidth of $G_{\inc}\pare{S_L}$ is at least $n$.
Now let $T$ be the linear branch decomposition of $\vars \cup \consLge$ in \cref{fig branch}.
It is simple to show that the projection-width of $S_L$ and $T$ is bounded by $2n$.
\end{prf}

\begin{figure}
\center
\caption{Illustration of the branch decomposition $T$ in the proof of \cref{obs family}.}
\label{fig branch}
\scalebox{0.8}{%
\begin{forest}
for tree={
  parent anchor=center,
  child anchor=center,
  align=center,
  edge={-},
  l sep=0pt,
  s sep=60pt,
}
[$\bullet$
[$\bullet$, label=below:$\con_{2n}$]
[$\bullet$
[$\bullet$, label=below:$\var_{2n}$]
[$\bullet$
[$\bullet$, label=below:$\con_{2n-1}$]
[$\bullet$
[$\bullet$, label=below:$\var_{2n-1}$]
[$\bullet$
[$\bullet$, label=below:$\cdots$]
[$\bullet$
[$\bullet$, label=below:$\con_2$]
[$\bullet$
[$\bullet$, label=below:$\var_2$]
[$\bullet$, label=below:$\var_1$]
]
]
]
]
]
]
]
\end{forest}
}
\end{figure}

We note that \cref{cor optimization itw} immediately yields several classical tractability results.
Given a hypergraph $H=(V,E)$, recall that the \emph{incidence graph} of $H$, denoted $G_{\inc}\pare{H}$, is the bipartite graph with vertex bipartition $V \cup E$, where an edge connects $v \in V$ and $e \in E$ if and only if $v \in e$.
\cref{cor optimization itw} implies that, given a tree decomposition of the incidence graph $G_{\inc}(H)$ of treewidth $\itwidth$, the weighted set cover and weighted set packing problems on $H$ can be solved in time $O(2^{3(\itwidth+1)}\pare{\card{E}+\card{V}}\card{V})$, while the weighted hitting set and weighted independent set problems on $H$ can be solved in time $O(2^{3(\itwidth+1)}\pare{\card{V}+\card{E}}\card{E})$.


Finally, we note that \cref{cor optimization itw} remains valid when the incidence treewidth is replaced by the primal treewidth.
More precisely, the same result holds if, in its statement, the incidence graph of the separable system $G_{\inc}\pare{S}$ is replaced by the primal graph $G_{\pri}\pare{S}$.
The \emph{primal graph} of a separable system $S$ is the graph $G_{\pri}\pare{S}$ with vertex set $\vars$, where two vertices $\var,\var’ \in \vars$ are adjacent if and only if there exists a constraint $\con \in \cons$ such that $g^\con_\var \not\equiv 0$ and $g^\con_{\var’} \not\equiv 0$.
This follows from the well-known fact that any tree decomposition of $G_{\pri}\pare{S}$ of treewidth $\ptwidth$ can be transformed into a tree decomposition of $G_{\inc}\pare{S}$ of treewidth at most $\ptwidth + 1$.

\subsubsection{Integer separable optimization}
\label{sec INLP incidence}

From \cref{cor inl optimization,lem itw PSw} we obtain the following result.

\begin{corollary}[Integer separable optimization -- incidence treewidth]
\label{cor inl optimization itw}
Consider a separable inequality system $S_S = \pare{\vars,\domI,\consSle,\consSge}$, let $\consS := \consSle \cup \consSge$, let $\gamma := \max\bra{\gamma^\con \mid \con \in \consS}$, and let $T$ be a tree decomposition of $G_{\inc}\pare{S_S}$ of treewidth $\itwidth$.
Let $\nu_\var : \dom \to \R$ for every $\var \in \vars$.
Then, the integer separable optimization problem can be solved in time \eqref{eq inl opt runtime}, where $\pwidth$ is replaced by $\max\bra{\card{\domI},\gamma+1}^{\itwidth+1}.$
\end{corollary}

\subsubsection{Integer linear optimization}
\label{sec ILP incidence}

From \cref{cor il optimization,lem itw PSw} we obtain the result below.

\begin{corollary}[Integer linear optimization -- incidence treewidth]
\label{cor il optimization incidence}
Consider a linear inequality system $S_L = \pare{\vars,\domI,\consLle,\consLge}$, let $\consL := \consLle \cup \consLge$, let $\gamma := \max\bra{\gamma^\con \mid \con \in \consL}$, and let $T$ be a tree decomposition of $G_{\inc}\pare{S_L}$ of treewidth $\itwidth$.
Let $\nu_\var \in \R$ for every $\var \in \vars$.
Then, the integer linear optimization problem can be solved in time
\eqref{eq il opt runtime}, where $\pwidth$ is replaced by $\max\bra{2 \dom_{\max}+1,\gamma+1}^{\itwidth+1}.$
\end{corollary}

Bounding $\gamma$ in \cref{cor il optimization incidence}, allows us to compare our result with the literature in integer linear optimization.
We obtain the following result.

\begin{corollary}[Integer linear optimization -- incidence treewidth v2]
\label{cor il optimization incidence v2}
Consider a linear inequality system $S_L = \pare{\vars,\domI,\consLle,\consLge}$, let $\consL := \consLle \cup \consLge$, and let $T$ be a tree decomposition of $G_{\inc}\pare{S_L}$ of treewidth $\itwidth$.
Let $\nu_\var \in \R$ for every $\var \in \vars$.
Let $A_{\max}$ be the maximum absolute value of $a^\con_\var$, for $\var \in \vars$ and $\con \in \consL$.
Then, the integer linear optimization problem can be solved in time
\eqref{eq il opt runtime}, where $\pwidth$ is replaced by $\pare{2A_{\max} \dom_{\max} \card{\vars}+1}^{\itwidth+1}.$
\end{corollary}

\begin{prf}
We apply \cref{cor il optimization incidence}.
To bound $\gamma := \max\bra{\gamma^\con \mid \con \in \consL}$, recall (see \cref{sec def projection-width}) that 
$\gamma^\con$, for $\con \in \consL$, is defined by:
\begin{align*}
& \gamma^\con := 
\delta^\con - \sum_{\var \in \vars} \min\bra{a^\con_\var d' \mid d' \in \domI}.
\end{align*}
As discussed in \cref{sec def projection-width}, we can assume $\gamma^\con \ge 0$ for every $\con \in \consL$.
We can also assume $\delta^\con \le A_{\max} \dom_{\max} \card{\vars}$ for every $\con \in \consL$.
For $\con \in \consLle$, this is because otherwise all assignments from $\vars$ to $\domI$ satisfy the constraint $\con$.
For $\con \in \consLge$, this is because otherwise no assignment from $\vars$ to $\domI$ satisfies the constraint $\con$.
We then obtain $\gamma \le 2 A_{\max} \dom_{\max} \card{\vars}$.
\end{prf}

We observe that \cref{cor il optimization incidence v2} recovers the tractability result for integer linear optimization established in \cite{GanOrdRam17}.
In fact, our running time is comparable to that of theorem~11 in~\cite{GanOrdRam17},
which is
\begin{align*}
O(\pare{A_{\max} \dom_{\max} \card{\vars}}^{2\itwidth+2} \itwidth (\card{\vars}+\card{\consL})).
\end{align*}

\subsubsection{Binary polynomial optimization}
\label{sec BPO itw}


For a hypergraph $H$,
it is shown in \cite{CapdPDiG24xx} that a tree decomposition of $G_{\inc}\pare{H}$ of treewidth $\itwidth$ yields a tree decomposition of $G_{\inc}\pare{S_{\BPO}}$ of treewidth at most $2(\itwidth+1)$.
Combining this result with \cref{cor BPO,lem itw PSw} gives the following.

\begin{corollary}[Binary polynomial optimization -- incidence treewidth]
\label{cor BPO itw}
Consider a hypergraph $H=(V,E)$ and let $T$ be a tree decomposition of $G_{\inc}\pare{H}$ of treewidth $\itwidth$.
Let $\val_v \in \R$ for every $v \in V$, and $\val_e \in \R$ for every $e \in E$.
Then, the binary polynomial optimization problem can be solved in time 
\begin{align*}
O\pare{2^{6\itwidth+9} \pare{\card{V} + \size(H)} \size(H)},
\end{align*}
\end{corollary}

\cref{cor BPO itw} recovers the tractability result in \cite{CapdPDiG24xx} for binary polynomial optimization over hypergraphs with bounded incidence treewidth.
Unlike the proof in \cite{CapdPDiG24xx}, ours does not rely on knowledge compilation.

For constrained binary polynomial optimization, we can employ \cref{cor optimization itw,lem itw PSw} to obtain the following tractability result.

\begin{corollary}[Constrained binary polynomial optimization -- incidence treewidth]
\label{cor constrained BPO itw}
Consider a hypergraph $H=(V,E)$ and let $T$ be a tree decomposition of $G_{\inc}\pare{H}$ of treewidth $\itwidth$.
Consider a separable system $S_c = \pare{\vars_V \cup \yvars_E,\bra{0,1},\consinc,\consgec}$ and let $W$ be a vertex cover of $G_{\inc}\pare{S_c}$ of cardinality $\kappa$.
Let $\val_v \in \R$ for every $v \in V$, and $\val_e \in \R$ for every $e \in E$.
Then, the constrained binary polynomial optimization problem can be solved in time
\begin{align*}
O\pare{2^{6\itwidth+9+3\kappa} \pare{\card{V} + \size(H)} \size(H)}.
\end{align*}
\end{corollary}

\begin{prf}
From the proof of lemma~3 in \cite{CapdPDiG24xx}, a tree decomposition of $G_{\inc}\pare{H}$ of treewidth $\itwidth$ yields a tree decomposition of $G_{\inc}\pare{S_{\BPO}}$ of treewidth at most $2(\itwidth+1)$.
Now denote by $S'$ the separable system obtained by putting together the separable systems $S_{\BPO}$ and $S_c$, that is, $S' = \pare{\vars_V \cup \yvars_E,\bra{0,1},\consinc,\consBPO \cup \consgec}$.
Note that $G_{\inc}\pare{S'}$ is obtained by adding isolated vertices to $G_{\inc}\pare{S_{\BPO}}$ and $G_{\inc}\pare{S_c}$, and then taking their union.
Clearly, adding isolated vertices does not increase the treewidth or the cardinality of a vertex cover.
It then follows from lemma~3 in~\cite{AleLozQuiRabRazZam24}, that $G_{\inc}\pare{S'}$ has treewidth at most $2(\itwidth+1)+\kappa$.
The result then follows from \cref{cor optimization itw}.
\end{prf}

\cref{cor constrained BPO itw} is just one illustration of how \cref{cor optimization itw} can be applied to constrained binary polynomial optimization.
Other results of the same type can be obtained by replacing lemma~3 in \cite{AleLozQuiRabRazZam24} with analogous bounds on the incidence treewidth of the combined system.
In particular, \cref{cor constrained BPO itw} is new and substantially extends the recent result of \cite{CapdPDiG24xx}, which establishes tractability of binary polynomial optimization with a single ``extended cardinality constraint.''
More general tractability statements can be obtained by applying directly \cref{th optimization} instead of \cref{cor optimization itw}.

\bigskip

\begin{small}
%

\noindent
\textbf{Funding.} Alberto~Del~Pia is partially funded by AFOSR grant FA9550-23-1-0433 and ONR grant N00014-25-1-2490. Any opinions, findings, and conclusions or recommendations expressed in this material are those of the author and do not necessarily reflect the views of the Air Force Office of Scientific Research or of the Office of Naval Research.
\end{small}

\ifthenelse {\boolean{MPA}}
{
\bibliographystyle{spmpsci}
}
{
\bibliographystyle{alpha}
}


\newcommand{\etalchar}[1]{$^{#1}$}


\end{document}